\theoremstyle{plain}
\newtheorem{lemma}{Lemma}[section]
\newtheorem{proposition}[lemma]{Proposition}
\newtheorem{theorem}[lemma]{Theorem}
\newtheorem{conjecture}[lemma]{Conjecture}
\theoremstyle{definition}
\newtheorem{definition}[lemma]{Definition}
\def\C{\mathbb C}
\def\R{\mathbb{R}}
\def\Z{\mathbb{Z}}
\def\to{\rightarrow}
\def\SL{\mathrm{SL}}
\def\tr{\mathrm{tr}}
\def\S{\mathbb{S}}
\def\K{\mathcal{K}}
\def\A{\mathcal{A}}
\def\I{\mathcal{I}}
\def\l{\langle\!\langle}
\def\r{\rangle\!\rangle}
\def\t{x}
\title{Trace-free characters and abelian knot contact homology I}
\author{Fumikazu Nagasato}
\address{Department of Mathematics, Meijo University, Tempaku, Nagoya 468-8502, Japan}
\email{fukky@meijo-u.ac.jp}
\subjclass[2020]{Primary 57K31; Secondary 57K18}
\keywords{abelian knot contact homology, character varieties, trace-free characters}
\begin{document}

\maketitle

\begin{abstract}
We study the structure underlying Ng's conjecture, 
which relates the degree $0$ abelian knot contact homology of a knot $K$
to the coordinate ring of the $\SL_2(\C)$-character variety $X(\Sigma_2 K)$
of the $2$-fold branched cover of the $3$-sphere branched along $K$. 
Our approach is based on the study of (meridionally) trace-free characters of knot groups. 
For each knot $K$, they form a closed algebraic subset $S_0(K)$ 
of the $\SL_2(\C)$-character variety of $K$, defined by the trace-free condition on meridians. 
The subset $S_0(K)$, called the trace-free slice of $K$, has a natural connection 
to $X(\Sigma_2K)$. 
We show that the trace-free slice admits the structure of a $2$-fold branched cover 
of a closed algebraic set, called the fundamental variety, whose coordinate ring 
coincides with the nilradical quotient of the complexification 
of degree $0$ abelian knot contact homology. 
Using this framework, we introduce the notion of \emph{ghost characters} 
and prove that Ng's conjecture holds for a knot $K$ 
if and only if $K$ admits no ghost characters. 
This criterion establishes Ng's conjecture for all 2-bridge and 3-bridge knots.
\end{abstract}


\section{Introduction}\label{sec_intro}
The purpose of this paper is to analyze the structure underlying Ng's conjecture, 
which relates the character variety of the 2-fold branched cover $\Sigma_2K$ 
of the 3-sphere $\S^3$ branched along a knot $K$ 
and the degree 0 abelian knot contact homology of $K$. 
The key object in our approach is the characters of (meridionally) trace-free 
$\SL_2(\C)$-representations of the knot group $G(K)$. 
These characters form a closed algebraic subset $S_0(K)$ of the $\SL_2(\C)$-character variety 
$X(K)$ of $G(K)$, defined by the trace-free condition on meridians. 
We call the subset $S_0(K)$ the trace-free slice of $X(K)$ (or simply the trace-free slice of $K$).
The defining equations for $S_0(K)$ can be obtained directly from a Wirtinger presentation 
of $G(K)$ (see Theorem \ref{defpoly_S0K}; cf.\ \cite[Theorem 1.1]{Nagasato4}).

Character varieties play an important role in both geometry and topology, 
and the trace-free slice $S_0(K)$ is no exception. 
A classical example is the Casson-Lin invariant of a knot $K$, 
introduced in \cite{Lin1} via trace-free $\mathrm{SU}(2)$-representations of the knot group $G(K)$.
This invariant provides a gauge-theoretic interpretation of the knot signature. 
Our interest in $S_0(K)$ originates from the study of an $\SL_2(\C)$-analogue 
of the Casson-Lin invariant (see \cite{Nagasato1, Nagasato0, Nagasato-Yamaguchi}, etc). 
In these earlier works, we established a precise relationship between 
trace-free $\SL_2(\C)$-representations of $G(K)$ and $\SL_2(\C)$-representations 
of the fundamental group $\pi_1(\Sigma_2 K)$. 
Based on this relationship, we introduced a natural map 
\[
\widehat{\Phi}: S_0(K) \longrightarrow X(\Sigma_2 K),
\]
where  $X(\Sigma_2 K)$ denotes the $\SL_2(\C)$-character variety of $\pi_1(\Sigma_2K)$ 
(see \cite{Nagasato-Yamaguchi} for details). 

Degree $0$ knot contact homology $HC_0(K)$ was introduced by L. Ng \cite{Ng1} 
combinatorially using the knot DGA. Ekholm, Etnyre, Ng, and Sullivan \cite{EENS} 
developed a fully geometric and topological framework realizing this combinatorial theory. 
The homology $HC_0(K)$ has several striking properties. 
One of the most interesting properties is the existence of an isomorphism, 
shown in \cite[Proposition 5.6]{Ng2}, between the complexification of the abelianization 
$HC_0^{ab}(K)$ and the coordinate ring $\mathbf{C}[X(\Sigma_2K)]$ 
of the character variety $X(\Sigma_2K)$ for any $2$-bridge knot $K$ 
(see also \cite{Nagasato, Nagasato3, Nagasato4} for alternative proofs). 
Based on this observation, in \cite[Conjecture 5.7]{Ng2}, Ng conjectured that 
such an isomorphism exists for any knot. 
Despite the existence of such an isomorphism for $2$-bridge knots, 
the underlying structure of this correspondence is not yet well understood, 
and Ng's conjecture remains open in general. 

In this context, the trace-free slice $S_0(K)$ provides a natural algebro-geometric framework 
for describing the correspondence between $HC_0^{ab}(K)\otimes \C$ 
and $\mathbf{C}[X(\Sigma_2K)]$. 
To clarify the mechanism underlying this relationship, we analyze the structure of $S_0(K)$ 
as a $2$-fold branched cover over a closed algebraic set $F_2(K)$, 
referred to as the fundamental variety of $K$. 
In fact, the coordinate ring $\mathbf{C}[F_2(K)]$ coincides 
with $(HC_0^{ab}(K)\otimes \C)/\sqrt{0}$. 
By Hilbert's Nullstellensatz, Ng's conjecture is therefore equivalent to the statement 
that $F_2(K)$ and $X(\Sigma_2K)$ are isomorphic as algebraic sets (Conjecture \ref{conj_nag}). 
Using the above formulation of $S_0(K)$, we introduce the notion of a \emph{ghost character} 
of a knot (Definition \ref{def_ghost}), namely a point in $F_2(K)$ that does not lift to $S_0(K)$. 
Then, the surjectivity of the map $\widehat{\Phi}: S_0(K) \to X(\Sigma_2K)$ implies that 
Ng's conjecture holds for knots admitting no ghost characters 
(Theorem \ref{thm_Ng_conj} (1)). 
In particular, this is the case for all 2-bridge and 3-bridge knots (Theorem \ref{thm_noghost}). 
Moreover, it follows that Ng's conjecture holds for a knot $K$ if and only if $K$ 
admits no ghost characters (Theorem \ref{iff}). 

The paper is organized as follows. 
In Section \ref{sec_review}, we briefly review $\SL_2(\C)$-character varieties of 
finitely presented groups and introduce the trace-free slice $S_0(K)$. 
In Section \ref{sec_main}, we derive defining equations for $S_0(K)$ 
via the Kauffman bracket skein algebra and describe the structure of $S_0(K)$ 
as a $2$-fold branched cover $q : S_0(K) \to F_2(K)$. 
In Section \ref{sec_hc0}, we first review degree $0$ abelian knot contact homology 
$HC_0^{ab}(K)$ and show that $(HC_0^{ab}(K) \otimes \C)/\sqrt{0}$ 
and $\mathbf{C}[F_2(K)]$ coincide. 
Using Hilbert's Nullstellensatz, we then reformulate Ng's conjecture 
in terms of the projection $h^*: X(\Sigma_2K) \to F_2(K)$ 
by analyzing the structure of $X(\Sigma_2K)$ and the map 
$\widehat{\Phi}: S_0(K) \to X(\Sigma_2K)$. 
Using this formulation, we introduce ghost characters and show that all $2$-bridge and 
$3$-bridge knots admit no ghost characters, thereby verifying Ng's conjecture for these knots.  
Finally, we prove that Ng's conjecture holds for a knot $K$ if and only if $K$ 
admits no ghost characters.


\section{Trace-free slice of a knot}\label{sec_review}

\subsection{Character variety of a finitely presented group}
We recall the definition of $\SL_2(\C)$-character varieties (or simply character varieties) 
for finitely presented groups, following the work of Culler and Shalen \cite{Culler-Shalen}. 
Let $G$ be a finitely presented group with generators $g_1,\cdots,g_n$. 
For a representation $\rho:G \to \SL_2(\C)$, 
the character $\chi_{\rho}$ is defined by $\chi_{\rho}(g)=\tr(\rho(g))$ for $g\in G$. 
The $\SL_2(\C)$-trace identity
\[
\tr(AB)=\tr(A)\tr(B)-\tr(AB^{-1})
\]
for $A,B \in \SL_2(\C)$ 
implies that, for any $g \in G$, the trace $\tr(\rho(g))$ can be expressed 
as a polynomial in the following trace functions
\[ 
\{t_{g_i}(\rho)\}_{1\leq i\leq n},\ 
\{t_{g_ig_j}(\rho)\}_{1\leq i<j \leq n},\ 
\{t_{g_ig_jg_k}(\rho)\}_{1 \leq i<j<k \leq n},
\] 
where $t_{g}(\rho):=\tr(\rho(g))$ 
(see \cite{Culler-Shalen,Fricke, Horowitz,Vogt}; cf. \cite{Gonzalez-Montesinos}). 

Let $\mathfrak{X}(G)$ denote the set of the characters of $\SL_2(\C)$-representations of $G$.  
In this setting, the $\SL_2(\C)$-character variety (or simply the character variety) $X(G)$ 
of $G$ is defined as the image under the map $t$
\[
t:\mathfrak{X}(G) \to \C^{n+{n\choose 2}+{n\choose 3}},\ 
t(\chi_{\rho})=\left(t_{g_i}(\chi_{\rho}); t_{g_ig_j}(\chi_{\rho}); t_{g_ig_jg_k}(\chi_{\rho})\right), 
\]
where $t_{g}(\chi_{\rho}):=t_{g}(\rho)$. 
It follows that $X(G)$ is a closed algebraic subset of the affine space.
The parametrization of $X(G)$ depend on the choice of a generating set, 
but only up to biregular equivalence.
Therefore, $X(G)$ is an invariant of $G$ up to biregular equivalence.

We now turn to the character varieties of knot groups. 
For a knot $K$ in $\S^3$, we denote by $E_K$ the knot exterior, 
and by $G(K)=\pi_1(E_K)$ the knot group. The group $G(K)$ has a presentation 
generated by meridians of $K$. For instance, given a knot diagram $D_K$ with $n$ crossings, 
the Wirtinger algorithm yields the Wirtinger presentation associated with $D_K$: 
\[
G(K)=\langle m_1,\cdots, m_n \mid r_1,\cdots,r_n \rangle, 
\]
where $m_i$ $(1 \leq i \leq n)$ is a meridian corresponding to the $i$th arc of $D_K$, 
and $r_j$ $(1 \leq j \leq n)$ is a word in $m_1,\cdots,m_n$ associated with the $j$th crossing 
(see, for example, \cite{Burde-Zieschang, Kawauchi} etc.). 
If the $s$th crossing in $D_K$ is depicted as 
\[
\begin{minipage}{3.5cm}
\begin{overpic}[width=\hsize]{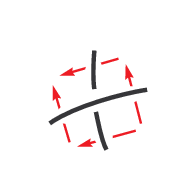}
\put(30,70){$m_j$}
\put(10,40){$m_i$}
\put(80,30){$m_i$}
\put(60,10){$m_k$}
\end{overpic}
\end{minipage} 
\]
then we call the triple $(i,j,k)$ (with $j<k$) a Wirtinger triple of $D_K$. 
In this case, one may take $r_s=m_im_jm_i^{-1}m_k^{-1}$. 
Any one of the relations $r_1,\cdots,r_n$ can be eliminated due to deficiency one property 
of knot groups. 
Furthemore, if $K$ is a $b$-bridge knot, then we can obtain a $b$-bridge knot group 
presentation $G(K)=\langle m_1,\cdots,m_b \mid \tilde{r}_1,\cdots, \tilde{r}_{b-1} \rangle$ 
by reducing the generators and the relators of the Wirtinger presentation by Tietze transformations. 
The character variety $X(K)=X(G(K))$ associated 
with a presentation generated by $n$ meridians $m_1,\cdots,m_n$ is given by  
\[
X(K)=t(\mathfrak{X}(G(K)))
=\{(t_{m_i}(\chi_{\rho}); t_{m_im_j}(\chi_{\rho}); t_{m_im_jm_k}(\chi_{\rho})) 
\in \C^{n+{n \choose 2}+{n \choose 3}} \mid \chi_{\rho} \in \mathfrak{X}(G(K))\}. 
\]
Since all meridians are conjugate,  
the entries $(t_{m_i}(\chi_{\rho}))$ may be replaced by a single representative  
$t_{m_j}(\chi_{\rho})$ for some $j$. 
The character variety $X(K)$ thus provides an invariant of knot groups,
and therefore an invariant of knots themselves, up to biregular equivalence.


\subsection{Trace-free slice of a knot}
We focus on a special class of representations of knot groups. 
Let $\mu_K$ be a meridian of $K$. A representation $\rho: G(K) \to \SL_2(\C)$ 
is said to be (meridionally) trace-free  if $\tr(\rho(\mu_K))=0$ 
holds\footnote{This is also 
called traceless representation.}. 
The character of a trace-free representation is called a trace-free character. 
The set of trace-free characters forms a subset 
\[
\mathfrak{S}_0(K):=\{\chi_{\rho}\in \mathfrak{X}(K) \mid \chi_{\rho}(\mu_K)=0\}. 
\]
of $\mathfrak{X}(K)=\mathfrak{X}(G(K))$. 
By \cite{Culler-Shalen}, this subset can be realized 
as a closed algebraic subset of the character variety $X(K)$. 
Indeed, by definition, the image $t(\mathfrak{S}_0(K))$ corresponds to the intersection 
of $X(K)$ with the hyperplane defined by $t_{\mu_K}(\chi_{\rho})=0$. 
Since any meridians are conjugate, for a presentation generated by $n$ meridians 
$m_1,\cdots,m_n$, the equation $t_{\mu_K}(\chi_{\rho})=0$ implies   
$t_{m_i}(\chi_{\rho})=0$ for all $1 \leq i \leq n$. 
Projecting this hyperplane section in $\C^{1+{n \choose 2}+{n \choose 3}}$ 
to $\C^{{n \choose 2}+{n \choose 3}}$ via the map $p$ defined by 
\[
(z_1,\cdots,z_n; z_{12},\cdots,z_{nn-1}; z_{123},\cdots,z_{n-2,n-1,n}) 
\mapsto
(z_{12},\cdots,z_{nn-1}; z_{123},\cdots, z_{n-2,n-1,n}),  
\]
we obtain 
\[
p(t(\mathfrak{S}_0(K))) =\left\{\left.\left(t_{m_im_j}(\chi_{\rho})); t_{m_im_jm_k}(\chi_{\rho})\right)
\in \C^{{n\choose 2}+{n\choose 3}} \right| \chi_{\rho}\in\mathfrak{S}_0(K) \right\}. 
\]
Bacause $t_{m_i}(\chi_{\rho})=0$ for all $1 \leq i \leq n$, the projection $p$ is biregular, 
and the image is a closed algebraic set isomorphic to $t(\mathfrak{S}_0(K))$. 
We call this closed algebraic set the trace-free slice of $X(K)$ 
(or simply the trace-free slice of $K$) and denote it by $S_0(K)$. 
By construction, the trace-free slice is a knot invariant, up to biregular equivalence. 

The trace-free slices encode several kinds of topological information on knots.
In particular, they exhibit additivity under the connected sum \cite{Nagasato1},
are closely related to the knot signature \cite{Lin1} and the $A$-polynomial \cite{Nagasato2},
describe the structure of the 2-fold branched cover whose branch set consists of
the characters of metabelian representations \cite{Lin2, Nagasato-Yamaguchi},
and correspond to degree $0$ abelian knot contact homology \cite{Nagasato3, Nagasato0}.
The results of \cite{Nagasato3,Nagasato4,Nagasato0} were later reformulated 
in terms of reflective augmentations in \cite{Cornwell}. 

In this paper, we analyze the trace-free slice $S_0(K)$ in order to clarify 
the mechanism underlying Ng's conjecture. 
A key step in this approach is to realize $S_0(K)$ concretely as a closed algebraic set.
\begin{theorem}[cf. \cite{Nagasato4}, Theorem 3.2 in \cite{Gonzalez-Montesinos}]
\label{defpoly_S0K}
Let $G(K)=\langle m_1,\cdots,m_n \mid r_1,\cdots,r_{n-1} \rangle$ be 
a Wirtinger presentation. Then the trace-free slice $S_0(K)$ is isomorphic to 
the algebraic set: 
\[
S_0(K) \cong 
\left\{\left.
(x_{12},\cdots,x_{nn-1}; x_{123},\cdots,x_{n-2,n-1,n})\in\C^{{n \choose 2}+{n \choose 3}}
\right| {\rm (F2)},{\rm (GH)} \right\},
\]
where the defining equations are given as follows: 
\begin{description}
\item[(F2)] Fundamental relations 
\begin{eqnarray*}
&x_{ak}=x_{ij}x_{ai}-x_{aj},&\\
&(1 \leq a \leq n,\ (i,j,k):\mbox{a Wirtinger triple}),&
\end{eqnarray*}
\item[(GH)] General hexagon relations 
\begin{eqnarray*}
&x_{i_1 i_2 i_3} \cdot x_{j_1 j_2 j_3}
=\frac{1}{2}
\left|
\begin{array}{ccc}
x_{i_1 j_1} & x_{i_1 j_2} & x_{i_1 j_3}\\
x_{i_2 j_1} & x_{i_2 j_2} & x_{i_2 j_3}\\
x_{i_3 j_1} & x_{i_3 j_2} & x_{i_3 j_3}
\end{array}\right|,&\\
&(1 \leq i_1<i_2<i_3 \leq n,\ 1 \leq j_1<j_2<j_3 \leq n),&
\end{eqnarray*}
\end{description}
with the convention $x_{ii}=2$, $x_{ji}=x_{ij}$ and 
$x_{i_{\sigma(1)}i_{\sigma(2)}i_{\sigma(3)}}=\mathrm{sign}(\sigma)x_{i_1i_2i_3}$ 
for any permutation $\sigma \in \mathfrak{S}_3$. 
\end{theorem}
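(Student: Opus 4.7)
The plan is to view the theorem as a specialisation, to the trace-free locus and then to the knot group, of the Gonzalez--Montesinos presentation of the $\SL_2(\C)$-character variety of a free group. I would split the argument into three steps.

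\emph{Step 1: Reduce to the free case.} The $\SL_2(\C)$-trace identity recalled at the start of Section \ref{sec_review} embeds $X(F_n)$ in $\C^{n+\binom{n}{2}+\binom{n}{3}}$ and (by Theorem~3.2 of \cite{Gonzalez-Montesinos}) cuts it out by a standard family of hexagon-type and quartic-type trace identities. After setting $t_{m_i}=0$ for every $i$ and choosing the sign convention on the pair variables that forces $x_{ii}=2$, these identities collapse to exactly (H) and (R): the hexagon becomes (H), while the quartic becomes the vanishing of the $4\times 4$ determinant in (R). The latter simply records that any four trace-zero $2\times 2$ matrices are linearly dependent in the three-dimensional space of trace-zero matrices, so that the Gram matrix of the trace form on $\{M_1,M_2,M_a,M_b\}$ has vanishing determinant.

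\emph{Step 2: Necessity of (F2).} For a trace-free character, Cayley--Hamilton in $\SL_2(\C)$ gives $M_a^2=-I$, hence $M_a^{-1}=-M_a$. At each Wirtinger triple $(i,j,k)$ the relation $m_k=m_im_jm_i^{-1}$ therefore becomes the matrix identity $M_k=-M_iM_jM_i$. Using $M_iM_j=\tr(M_iM_j)\,I-M_j^{-1}M_i^{-1}$, multiplying on the right by $M_i$, substituting $M_\alpha^{-1}=-M_\alpha$, and taking $\tr(M_a\cdot)$ produces, after collecting terms and translating to the $x$-variables, precisely $x_{ak}=x_{ij}x_{ai}-x_{aj}$. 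This is (F2).

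\emph{Step 3: Sufficiency.} Given a point in $\C^{\binom{n}{2}+\binom{n}{3}}$ satisfying (F2), (H) and (R), Step~1 produces trace-free matrices $M_1,\dots,M_n\in\SL_2(\C)$, unique up to simultaneous conjugation, whose pair and triple traces realise the prescribed $x_{ij}$ and $x_{ijk}$. It remains to check that each Wirtinger relator acts as the identity. Set $N_s:=M_k+M_iM_jM_i$ for the Wirtinger triple at crossing $s$; then $\tr(N_s)=0$ automatically from $M_i^2=-I$ and cyclicity, and the computation of Step~2 run backwards shows that (F2) for all $a$ is equivalent to $\tr(M_aN_s)=0$ for all $a$. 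For an irreducible $\rho$ the $M_a$'s span the three-dimensional space of trace-zero matrices, and since the bilinear form $(A,B)\mapsto\tr(AB)$ is non-degenerate there, we conclude $N_s=0$, i.e.\ the Wirtinger relation. The reducible stratum is handled by a separate explicit upper-triangular computation.

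\emph{Main obstacle.} The delicate part is the sufficiency at the reducible locus in Step~3: once the $M_a$'s fail to span the trace-free subspace, non-degeneracy of the trace form no longer directly yields $N_s=0$ from (F2), and one must verify by hand that (F2), (H) and (R) together are still exactly the right set of equations, with no further hidden relations among the $x_{ij}$ and $x_{ijk}$ needed to cut out $S_0(K)$. A secondary, more bookkeeping, difficulty is to check cleanly in Step~1 that the Gonzalez--Montesinos quartics really do specialise precisely to (R) on the trace-free locus.
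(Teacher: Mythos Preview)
Your outline is a direct representation-theoretic attack, quite different from the paper's proof, which goes through the Kauffman bracket skein algebra. The paper first establishes your Step~1 as its Proposition~\ref{thm_RH} (specialising Gonz\'alez--Montesinos's (P1)--(P4) to the trace-free locus and checking that the resulting (Q3) is redundant modulo (H) and (R)). But from there the paper does \emph{not} try to verify the Wirtinger relators matrix-by-matrix. Instead it identifies $\mathbf{C}[S_0(K)]$ with the trace-free skein algebra $\K_{-1,TF}(E_K)/\sqrt{0}$, presents $\K_{-1,TF}(E_K)$ via a handle decomposition of $E_K$ as a quotient of $\K_{-1,TF}(H_n)$ by a ``sliding ideal'' $S_{D_K}$, and then proves, by an explicit diagrammatic reduction of arbitrary handle-slides, that $S_{D_K}$ coincides with the ideal $F_{D_K}$ generated by (F2) together with a three-index analogue (F3). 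Finally (F3) is shown to follow from (F2) and (H). This skein argument treats reducible and irreducible characters on the same footing and never needs to split into cases.

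Your Step~3, as written, has a real gap. The assertion ``for an irreducible $\rho$ the $M_a$'s span the three-dimensional space of trace-zero matrices'' is false in general: two non-commuting trace-free elements of $\SL_2(\C)$ already give an irreducible representation while spanning only a plane, and nothing prevents all the $M_a$ from lying in such a plane. Thus $\tr(M_aN_s)=0$ for all $a$ (your (F2)) and $\tr(N_s)=0$ together leave a one-dimensional ambiguity for $N_s$. The fix is exactly the ingredient the paper isolates: one must also use the three-index relations (F3), equivalent to $\tr(M_aM_bN_s)=0$. Since for trace-free $M_a,M_b$ one has $M_aM_b+M_bM_a=\tr(M_aM_b)I$, the commutator $[M_a,M_b]$ lies in the span of $\{I,M_aM_b\}$; when $\rho$ is irreducible some $[M_a,M_b]\neq 0$ completes $\{M_a\}$ to a spanning set of $\mathfrak{sl}_2$, and then $\{I,M_a,M_aM_b\}$ spans $M_2(\C)$, forcing $N_s=0$. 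So your strategy can be repaired, but only after proving that (F3) follows from (F2) and (H)---which is precisely the closing computation of the paper's proof---and after correcting the span claim. The ``main obstacle'' you flag is therefore not confined to the reducible stratum; it already bites in the irreducible case.
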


The coordinates $x_{ij}$ and $x_{ijk}$ correspond to 
$-t_{m_im_j}(\chi_{\rho})$ and $-t_{m_im_jm_k}(\chi_{\rho})$, respectively. 
(See Theorem \ref{thm_bps} for an explanation of the negative signs in these trace functions.) 
The relation {\rm (F2)} is symmetric in the indices $j$ and $k$, 
which correspond to the underarcs. Indeed, for a Wirtinger triple $(i,j,k)$, 
the relation {\rm (F2)} with $a=i$ implies $x_{ik}=x_{ij}$.

We remark that, in \cite{Nagasato4}, an alternative system of defining equations for $S_0(K)$ 
was given. This system consists of (F2), the rectangle relations (R), and the hexagon relations (H). 
In the present paper, the relations (R) and (H) are replaced by all sister relations (GH) 
associated with (H) (see Subsection \ref{subsec_gam} for details).


\section{Proof of Theorem \ref{defpoly_S0K}}\label{sec_main}
The proof of Theorem \ref{defpoly_S0K} relies essentially on the work of
Gonz\'alez-Acu\~na and Montesinos \cite[p. 639]{Gonzalez-Montesinos}.
They introduced a family of polynomials whose common zero locus gives
the character variety $X(F)$ of a finitely generated free group $F$.
For completeness, we briefly recall their construction and then formulate
its trace-free analogue, which will play a key role in the proof of
Theorem \ref{defpoly_S0K}.

\subsection{Character varieties of free groups and their trace-free slices}\label{subsec_gam}
Let $F_n=\langle m_1,\cdots,m_n \rangle$ be a finitely generated free group.
Denote by $X(F_n)$ its character variety, defined as the image of
$\mathfrak{X}(F_n)$ under the trace map
$t: \mathfrak{X}(F_n) \to \C^{n+{n \choose 2}+{n \choose 3}}$.
In Subsection \ref{subsec_sketch}, we make use of the Kauffman bracket skein algebra
(abbreviated KBSA) to prove Theorem \ref{defpoly_S0K} (see Theorem \ref{thm_bps}).
Accordingly, instead of the original trace map $t$, we adopt the modified trace map
$\tilde{t}: \mathfrak{X}(F_n) \to \C^{n+{n \choose 2}+{n \choose 3}}$
defined by the negative traces: 
\[
\tilde{t}(\chi_{\rho})=(-t_{m_i}(\chi_{\rho}); -t_{m_im_j}(\chi_{\rho}); -t_{m_im_jm_k}(\chi_{\rho})).
\]
Following \cite{Gonzalez-Montesinos}, the character variety 
$X(F_n) \subset \C^{n+{n \choose 2}+{n \choose 3}}$ (with coordinates $(\t_i; \t_{ij}; \t_{ijk})$) 
is described as the common zero locus of the following polynomials (P1), $\cdots$, (P4). 

\begin{description}
\item[(P1)] 
The quadratic relations
\[
(\t_{abc})^2+P_{abc} \t_{abc}+Q_{abc},
\] 
for $1 \leq a < b < c \leq n$, where  
\begin{eqnarray*}
P_{abc} & = & \t_a \t_{bc}+\t_b \t_{ac} + \t_c \t_{ab} + \t_a \t_b \t_c,\\
Q_{abc} & = & (\t_a)^2+(\t_b)^2+(\t_c)^2+(\t_{ab})^2+(\t_{ac})^2+(\t_{bc})^2
-\t_{ab}\t_{ac}\t_{bc}\\
&&+\t_a \t_b \t_{ab}+\t_a \t_c \t_{ac} +\t_b \t_c \t_{bc}-4.
\end{eqnarray*}

\item[(P2)] 
The determinants $Q_{12ab}^{12ab}$ for $3 \leq a < b \leq n$, where 
\[
R_{abcd}^{ijkl}=\left|\begin{array}{cccc}
M_{ai} & M_{aj} & M_{ak} & M_{al}\\
M_{bi} & M_{bj} & M_{bk} & M_{bl}\\
M_{ci} & M_{cj} & M_{ck} & M_{cl}\\
M_{di} & M_{dj} & M_{dk} & M_{dl}
\end{array}\right|,
\ 
M_{ij}=-x_{ij}-\frac{1}{2}x_i x_j.  
\]

\item[(P3)] 
The determinants $R_{123a}^{123b}$ for any $4 \leq a < b \leq n$. \\

\item[(P4)] The following polynomials for $1 \leq a < b< c \leq n$, 
\[
(-\t_{123}+\t_{132})(-2\t_{abc}-\t_a \t_b \t_c-\t_a \t_{bc}-\t_b \t_{ac}-\t_c \t_{ab})\\
-   
\left|\begin{array}{cccc}
-\t_{1} & -\t_{1a} & -\t_{1b} & -\t_{1c}\\
-\t_{2} & -\t_{2a} & -\t_{2b} & -\t_{2c}\\
-\t_{3} & -\t_{3a} & -\t_{3b} & -\t_{3c}\\
2      & -\t_{a}  & -\t_{b} & -\t_{c}
\end{array}\right|.
\]
\end{description}

A remark on the indices is in order. 
The indices $1,2,3$ in (P1) through (P4) may be replaced 
by arbitrary $i,j,k$ with $1 \leq i < j < k \leq n$.
The resulting polynomials, called the sister relations of
(P1),$\cdots$,(P4), vanish identically on $X(F_n)$. 

Now, the trace-free slice of the free group $F_n$, denoted by $S_0(F_n)$, 
is defined as the intersection of $X(F_n)$ with the hyperplane $x_i=0$ 
for $1 \leq i \leq n$. Its defining polynomials are obtained by substituting 
$x_i=0$ into (P1) through (P4).

\begin{itemize}
\item (Triangle relations) We have $P_{abc}=0$ and 
\[
Q_{abc}=-\frac{1}{2}
\left|\begin{array}{ccc}
x_{aa} & x_{ab} & x_{ac}\\
x_{ba} & x_{bb} & x_{bc}\\
x_{ca} & x_{cb} & x_{cc}
\end{array}
\right|, 
\]
so that by (P1) we obtain   
\[
\mathbf{(T)}:\hspace*{0.5cm}
(\t_{abc})^2
-\frac{1}{2}
\left|\begin{array}{ccc}
x_{aa} & x_{ab} & x_{ac}\\
x_{ba} & x_{bb} & x_{bc}\\
x_{ca} & x_{cb} & x_{cc}
\end{array}
\right| \ (1 \leq a < b < c \leq n).
\]

\item (Rectangle relations) By (P2), we obtain 
\[
\mathbf{(R)}:\hspace*{0.5cm}
\left|\begin{array}{cccc}
x_{11} & x_{12} & x_{1a} & x_{1b}\\
x_{21} & x_{22} & x_{2a} & x_{2b}\\
x_{a1} & x_{a2} & x_{aa} & x_{ab}\\
x_{b1} & x_{b2} & x_{ba} & x_{bb}
\end{array}\right| \ (3 \leq a < b \leq n). 
\]

\item (Pentagon relations) By (P3), we obtain 
\[
\mathbf{(P)}
:\hspace*{0.5cm}
\left|\begin{array}{cccc}
x_{11} & x_{12} & x_{13} & x_{1a}\\
x_{21} & x_{22} & x_{23} & x_{2a}\\
x_{31} & x_{32} & x_{33} & x_{3a}\\
x_{b1} & x_{b2} & x_{b3} & x_{ba}
\end{array}\right| \ (4 \leq a < b \leq n).
\]

\item (Hexagon relations) 
By (P4), for any $1 \leq i < j < k \leq n$ and $1 \leq a < b < c \leq n$, we obtain  
\[
\mathbf{(H)}:\hspace*{0.5cm}
x_{123}x_{abc}-\frac{1}{2}
\left|\begin{array}{ccc}
x_{1a} & x_{1b} & x_{1c}\\
x_{2a} & x_{2b} & x_{2c}\\
x_{3a} & x_{3b} & x_{3c}
\end{array}
\right| \ 
(1 \leq a < b < c \leq n). 
\]

\end{itemize}
Thus the trace-free slice $S_0(F_n)$ is isomorphic to the common zeros locus 
of (T), (R), (P), and (H). 

We next reformulate these polynomials in a more systematic way. 
Set  
\[
D_{i_1i_2i_3}^{j_1j_2j_3}({\mathbf x})
:=
\left|\begin{array}{ccc}
x_{i_1j_1} & x_{i_1j_2} & x_{i_1j_3}\\
x_{i_2j_1} & x_{i_2j_2} & x_{i_2j_3}\\
x_{i_3j_1} & x_{i_3j_2} & x_{i_3j_3}
\end{array}\right|,\hspace*{0.5cm}
D_{i_1i_2i_3i_4}^{j_1j_2j_3j_4}({\mathbf x})
:=
\left|\begin{array}{cccc}
x_{i_1j_1} & x_{i_1j_2} & x_{i_1j_3} & x_{i_1j_4}\\
x_{i_2j_1} & x_{i_2j_2} & x_{i_2j_3} & x_{i_2j_4}\\
x_{i_3j_1} & x_{i_3j_2} & x_{i_3j_3} & x_{i_3j_4}\\
x_{i_4j_1} & x_{i_4j_2} & x_{i_4j_3} & x_{i_4j_4}
\end{array}\right| 
\]
for $\mathbf{x}=(x_{ij})$. 
By combining (T) and (H) with all their sister relations, 
we obtain a single family of polynomial relations:
\[
\mathbf{(GH)}:\hspace*{0.5cm}
x_{ijk} x_{abc}-\frac{1}{2}D_{ijk}^{abc}({\mathbf x})
\  
(1 \leq i < j < k \leq n,\ 1 \leq a < b < c \leq n),
\]
which we refer to as the general hexagon relations.  
It then follows that the pentagon relations (P) vanish 
whenever both ${\rm (GH)}=0$ and ${\rm (R)}=0$ hold.
Indeed, if $x_{123}=x_{12a}=x_{13a}=x_{23a}=0$, 
then by the cofactor expansion and ${\rm (H)}=0$, we have  
\[
{\rm (P)}=D_{123a}^{123b}({\mathbf x})
=\frac{x_{123}}{2}(-x_{b1}x_{23a}+x_{b2}x_{13a}-x_{b3}x_{12a}+x_{ba}x_{123})=0. 
\]
Hence, we may assume that at least one of $x_{123}$, $x_{12a}$, $x_{13a}$, $x_{23a}$ 
is nonzero. 
Suppose $x_{123} \neq 0$. Then, by ${\rm (R)}=0$ and ${\rm (GH)}=0$, we obtain 
\[
D_{123a}^{123a}({\mathbf x})
=\frac{x_{123}}{2}(-x_{a1}x_{23a}+x_{a2}x_{13a}-x_{a3}x_{12a}+2x_{123})=0,
\]
which shows that at least one of $x_{12a}$, $x_{13a}$, $x_{23a}$ is nonzero. 
If $x_{12a} \neq 0$, then ${\rm (GH)}=0$ and ${\rm (R)}=0$ yield 
\begin{eqnarray*}
x_{12a}
D_{123a}^{123b}({\mathbf x})
&=&\frac{x_{12a}x_{12b}}{2}(x_{31}x_{23a}-x_{32}x_{13a}+2x_{12a}-x_{3a}x_{123})\\
&=&x_{12b}
D_{123a}^{123a}({\mathbf x})
=0.
\end{eqnarray*}
Therefore, ${\rm (P)}=0$. The remaining cases can be shown similarly. 

Furthermore, the rectangle relations ${\rm (R)}=0$ also follow from ${\rm (GH)}=0$.
More precisely, any point $(x_{ij};x_{ijk}) \in \C^{{n \choose 2}+{n \choose 3}}$ 
satisfying ${\rm (GH)}=0$ also satisfies ${\rm (R)}=0$.
For example, suppose at least one of $x_{12i}$, $x_{12j}$, $x_{1ij}$, or $x_{2ij}$ 
vanishes (say $x_{12i}=0$).
Then the cofactor expansion of $D_{12ij}^{12ij}({\mathbf x})$ for ${\mathbf x}=(x_{ij})$ gives
\begin{eqnarray*}
D_{12ij}^{12ij}({\mathbf x}) &=& -x_{1j}D_{12i}^{2ij}({\mathbf x})+x_{2j}D_{12i}^{1ij}({\mathbf x})
-x_{ij}D_{12i}^{12j}({\mathbf x}) +2D_{12i}^{12i}({\mathbf x})\\
&=& 2x_{12i}(-x_{1j}x_{2ij}+x_{2j}x_{1ij}-x_{ij}x_{12j}+2x_{12i})=0. 
\end{eqnarray*}
If all of $x_{12i}$, $x_{12j}$, $x_{1ij}$ and $x_{2ij}$ are nonzero, then  
\begin{eqnarray*}
x_{12j} D_{12ij}^{12ij}({\mathbf x}) &=& 2x_{12j}x_{12i}(-x_{1j}x_{2ij}+x_{2j}x_{1ij}-x_{ij}x_{12j}+2x_{12i})\\
&=& x_{12i}(-x_{1j}D_{12j}^{2ij}({\mathbf x})+x_{2j}D_{12j}^{1ij}({\mathbf x})
-x_{ij}D_{12j}^{12j}({\mathbf x}) +2D_{12j}^{12i}({\mathbf x}))\\
&=&x_{12i}D_{12ij}^{12jj}({\mathbf x})=0. 
\end{eqnarray*}
Since $x_{12j} \neq 0$, it follows that ${\rm (R)}=D_{12ij}^{12ij}({\mathbf x})=0$. 
The remaining cases are analogous. We therefore obtain the following proposition. 

\begin{proposition}[cf. \cite{Gonzalez-Montesinos}]\label{thm_RH}
For a free group $F_n=\langle m_1,\cdots, m_n \rangle$, 
the trace-free slice $S_0(F_n) \subset \C^{{n \choose 2}+{n \choose 3}}$ 
is isomorphic to the common zeros locus of {\rm (GH)}.
\end{proposition}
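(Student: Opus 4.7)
The plan is to derive the proposition by specializing the Gonzalez-Montesinos description of $X(F_n)$ to the hyperplanes $x_i = 0$, and then showing that the only relation left over after this specialization which is not already packaged into (H) or (R), namely (Q3), is redundant modulo (H) and (R).

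First I would note that $X(F_n)$ is cut out by the families (P1)--(P4), so $S_0(F_n)$, being the cross-section of $X(F_n)$ by the hyperplanes $\{x_i = 0\}_{1 \leq i \leq n}$, is cut out by the substituted families (Q1)--(Q4) computed in the excerpt. By inspection, (Q2) is exactly the rectangle relation (R), and (Q1) together with (Q4), along with all their sister polynomials obtained by permuting the triples of indices, unify into the single hexagon family (H) = (Q14). Since the sisters of (P1)--(P4) are generated by (P1)--(P4) over $X(F_n)$, this generation survives the substitution $x_i = 0$, so the sisters of (Q1) and (Q4) are automatic on the locus cut out by (Q1)--(Q4). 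In particular, the inclusion $S_0(F_n) \subseteq V({\rm H},{\rm R})$ is immediate.

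The reverse inclusion reduces to showing that every sister of (Q3) lies in the ideal generated by (H) and (R). By relabeling, it suffices to treat (Q3) as displayed, for indices $1,2,3,a,b$ with $4 \leq a < b \leq n$. Here I would carry out the case analysis sketched in the excerpt: either all four triple coordinates $x_{123}, x_{12a}, x_{13a}, x_{23a}$ vanish, in which case the hexagon identity rewrites the $4 \times 4$ determinant (Q3) as $\tfrac{x_{123}}{2}$ times a linear combination of these four quantities and so forces $({\rm Q3}) = 0$; or some one of them, say $x_{123}$, is nonzero, in which case applying (H) to the rectangle on $(1,2,3,a)$ and invoking (R) forces one of $x_{12a}, x_{13a}, x_{23a}$ to be nonzero, and then multiplying (Q3) by that nonzero coordinate and expanding both sides via (H) identifies $x_{12a} \cdot ({\rm Q3})$ with $x_{12b}$ times the rectangle on $(1,2,3,a)$, which vanishes by (R). The remaining subcases are identical by symmetry in the lower indices.

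The main obstacle is purely organizational: keeping track of the several $4 \times 4$ determinants and the hexagon-based expansions of their $3 \times 3$ minors without losing signs or confusing the roles of rows and columns, and verifying that the case "some $x_{\bullet\bullet\bullet}$ is nonzero" really does split into symmetric cases by relabeling. Once one representative subcase is verified cleanly, the rest follow from the built-in symmetry of (H) and (R), and the proposition is established.
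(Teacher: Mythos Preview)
Your proposal is correct and follows essentially the same route as the paper: specialize (P1)--(P4) to $x_i=0$, identify (Q2) with (R) and the union of (Q1), (Q4) with (H), and then eliminate (Q3) by the same case analysis on the vanishing of $x_{123},x_{12a},x_{13a},x_{23a}$, using the hexagon expansion of the $3\times 3$ minors together with the rectangle relation on $(1,2,3,a)$. The only cosmetic difference is that you frame the argument slightly more explicitly in terms of the two inclusions $S_0(F_n)\subseteq V({\rm H},{\rm R})$ and $V({\rm H},{\rm R})\subseteq S_0(F_n)$, whereas the paper presents it as a direct reduction of the defining equations.
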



\subsection{Proof of Theorem \ref{defpoly_S0K}}\label{subsec_sketch}
To prove Theorem \ref{defpoly_S0K}, we use the Kauffman bracket skein algebra 
(KBSA)\footnote{This is the specialization of the Kauffman bracket skein module 
at the parameter $t=-1$.} \cite{Bullock,Przytycki1,Przytycki2,Przytycki-Sikora}.  
The KBSA of a 3-manifold $M$, denoted by $\K_{-1}(M)$, is the quotient 
of the algebra over $\C$ generated by all free homotopy classes of loops in $M$ 
by the Kauffman bracket skein relations (specialized at $t=-1$):
\[
\begin{minipage}{7cm}\begin{overpic}[width=\hsize]{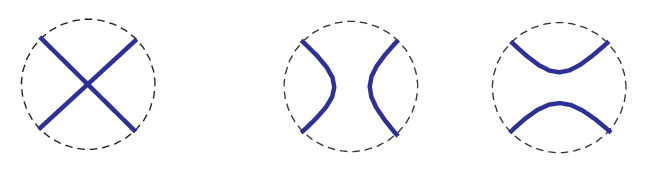}
\put(35,13){$-$}
\put(28,13){$=$}
\put(68,13){$-$}
\end{overpic}
\end{minipage},\ 
\begin{minipage}{2cm}\includegraphics[width=\hsize]{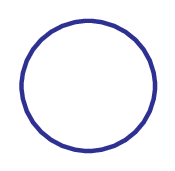}\end{minipage}
=-2,
\]
where in the first relation the loops outside the dashed circles coincide with each other. 
The product of two loops is defined by the disjoint union of them. 
In fact, a loop (a homotopy class of a loop) $s \in \K_{-1}(M)$ 
has the same properties as $-\tr(\rho(s))$ for an unspecified representation 
$\rho: \pi_1(M) \to \SL_2(\C)$. 
This gives rise to a correspondence between $\K_{-1}(M)$ 
and the coordinate ring of the character variety $X(\pi_1(M))$ 
(see Theorem \ref{thm_bps}). 
Here the coordinate ring $\mathbf{C}[V]$ of an algebraic set $V \subset \C^{N}$ 
with coordinates $z_1,\cdots,z_N$ is defined as the ring of regular functions on $V$. 
The ring $\mathbf{C}[V]$ is known to be isomorphic to 
the quotient of the polynomial ring $\C[z_1,\cdots,z_N]$ by the ideal $I_V$ 
of all polynomials vanishing on $V$:
\[
\mathbf{C}[V] \cong \C[z_1,\cdots,z_N]/I_V.
\]
Note that $I_V$ is a radical ideal.  
By Hilbert's Nullstellensatz, if $J_V$ is a set of polynomials 
whose common zero locus coincides with $V$, then
\[
I_V=\sqrt{\langle J_V \rangle},
\]
where $\langle * \rangle$ denotes the ideal generated by $*$, 
and $\sqrt{*}$ denotes the radical of an ideal $*$. 
Thus, in order to prove Theorem \ref{defpoly_S0K}, it suffices to determine 
a set $J_{S_0(K)}$ for $V=S_0(K)$, rather than computing $I_{S_0(K)}$ explicitly.
The following theorems provide a topological approach to do this. 
\begin{theorem}[\cite{Bullock, Przytycki-Sikora}]\label{thm_bps}
For a compact orientable $3$-manifold $M$, the map
\[
\varphi : \K_{-1}(M) \to \mathbf{C}[X(\pi_1(M))]
\]
defined by $\varphi(1)=1$ and $\varphi(\gamma)=-t_{\gamma}$ for a loop
$\gamma \in \K_{-1}(M)$
is a surjective $\C$-algebra homomorphism.
Moreover, $\ker(\varphi)$ is the nilradical $\sqrt{0}$.
\end{theorem}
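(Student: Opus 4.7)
The plan is to split the statement into three parts---well-definedness of $\varphi$, surjectivity, and the identification $\mathrm{Ker}(\varphi)=\sqrt{0}$---handled in that order. For well-definedness I would check that each of the two defining relations of $\K_{-1}(M)$ translates to a valid identity under $\varphi(\gamma)=-t_\gamma$. The unknot relation becomes $\varphi(\bigcirc)=-\tr(\mathrm{Id})=-2$. At a crossing of two loop segments representing group elements $\alpha,\beta$, the two smoothings represent $\alpha\beta$ and $\alpha\beta^{-1}$, and the $t=-1$ Kauffman bracket skein relation matches, after multiplying by $-1$ and taking traces, the fundamental $\SL_2(\C)$-trace identity $\tr(A)\tr(B)=\tr(AB)+\tr(AB^{-1})$ recalled in Section~\ref{sec_review}. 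Surjectivity is then immediate: $\mathbf{C}[X(\pi_1(M))]$ is generated over $\C$ by the trace functions $\{t_\gamma\}_{\gamma\in\pi_1(M)}$, and each $t_\gamma=-\varphi([\gamma])$ lies in the image.

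The inclusion $\sqrt{0}\subseteq\mathrm{Ker}(\varphi)$ is automatic since coordinate rings are reduced. For the reverse inclusion I would first establish that $\K_{-1}(M)$ is commutative: in a $3$-manifold any two loops can be isotoped to be disjoint outside a small ball, and the $t=-1$ skein relation is symmetric in the two smoothings of a crossing, so interchanging two loops does not alter the skein class of their disjoint-union product. Hence $\K_{-1}(M)/\sqrt{0}$ is a reduced, finitely generated (because $\pi_1(M)$ is) commutative $\C$-algebra, and $\varphi$ factors through it. By Hilbert's Nullstellensatz the induced map into $\mathbf{C}[X(\pi_1(M))]$ is injective if and only if every $\C$-algebra homomorphism $\chi:\K_{-1}(M)\to\C$ arises in the form $[\gamma]\mapsto -t_\gamma(\chi_\rho)$ for some character $\chi_\rho\in X(\pi_1(M))$.

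Producing this $\chi_\rho$ from a given $\chi$ is the main obstacle. I would follow the Bullock--Przytycki--Sikora reconstruction based on Procesi's $\SL_2(\C)$-invariant theory. After fixing generators $g_1,\dots,g_n$ of $\pi_1(M)$, the numbers $-\chi([g_{i_1}\cdots g_{i_k}])$ for $k\leq 3$ assemble into a candidate coordinate vector in $\C^{n+\binom{n}{2}+\binom{n}{3}}$, and the commutativity of $\K_{-1}(M)$ together with the skein relations force this vector to satisfy the Gonz\'alez--Montesinos polynomial identities (P1)--(P4) recalled in Section~\ref{subsec_gam}. By the main result of \cite{Gonzalez-Montesinos} the vector then defines a character $\chi_{\tilde\rho}\in X(F_n)$. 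Finally, each relator of $\pi_1(M)$ is null-homotopic in $M$ and so equals $-2$ in $\K_{-1}(M)$; propagating this equality through the skein product yields the free-homotopy identities $t_{r_j w}(\tilde\rho)=t_w(\tilde\rho)$ for every word $w$, which are exactly the conditions for $\chi_{\tilde\rho}$ to descend to $X(\pi_1(M))$. This produces the required $\chi_\rho$ and completes the proof, matching the approaches of \cite{Bullock,Przytycki-Sikora}.
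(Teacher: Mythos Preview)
The paper does not give its own proof of Theorem~\ref{thm_bps}; it is quoted as a result of Bullock and Przytycki--Sikora and used as a black box throughout Section~\ref{sec_main}. So there is no in-paper argument to compare yours against.

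That said, your sketch is a faithful outline of the Bullock/Przytycki--Sikora approach and is essentially sound. A couple of points deserve tightening. First, your commutativity argument is slightly off: the reason $\K_{-1}(M)$ is a commutative algebra is that at $t=-1$ one has $L_+=L_-$ in the skein module (both equal $-L_0-L_\infty$), so the disjoint-union product is independent of how the two links are pushed off one another; the phrase ``symmetric in the two smoothings'' does not quite capture this. Second, the assertion that the numbers $-\chi([g_{i_1}\cdots g_{i_k}])$ automatically satisfy the Gonz\'alez--Montesinos polynomials (P1)--(P4) is the real content of the theorem and is not a one-line consequence of ``commutativity together with the skein relations''; in the original proofs this step goes through the identification $\K_{-1}(H_n)\cong\mathbf{C}[X(F_n)]$ for a handlebody (equivalently, Procesi's first fundamental theorem for $\SL_2$-invariants), which the present paper also invokes via \cite{Przytycki-Sikora2}. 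Finally, the descent step is correct as stated---the condition $t_{r_j w}(\tilde\rho)=t_w(\tilde\rho)$ for all $w$ is exactly what characterizes the image of $X(\pi_1(M))$ inside $X(F_n)$---but you might note that passing from this trace condition to an honest factoring representation requires a short separate argument in the reducible case.
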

First, Theorem \ref{thm_bps} establishes an isomorphism 
$\K_{-1}(M)/\sqrt{0} \cong \mathbf{C}[X(\pi_1(M))]$,
providing a concrete method to compute the coordinate ring of the character variety 
via Kauffman bracket skein theory.

The next theorem gives an approach for computing the KBSA of a knot exterior.
Let $K$ be a knot given by an $n$-crossing diagram $D_K$.
The knot exterior $E_K$ can be decomposed into a handlebody $H_n$ of genus $n$,
$n$ $2$-handles, and a single $3$-handle (see Figure \ref{fig_decomp_EK}), which corresponds 
to the Wirtinger presentation of $G(K)$ associated with $D_K$. 

\begin{figure}[hbtp]
\[
\begin{minipage}{11cm}
\begin{overpic}[width=\hsize]{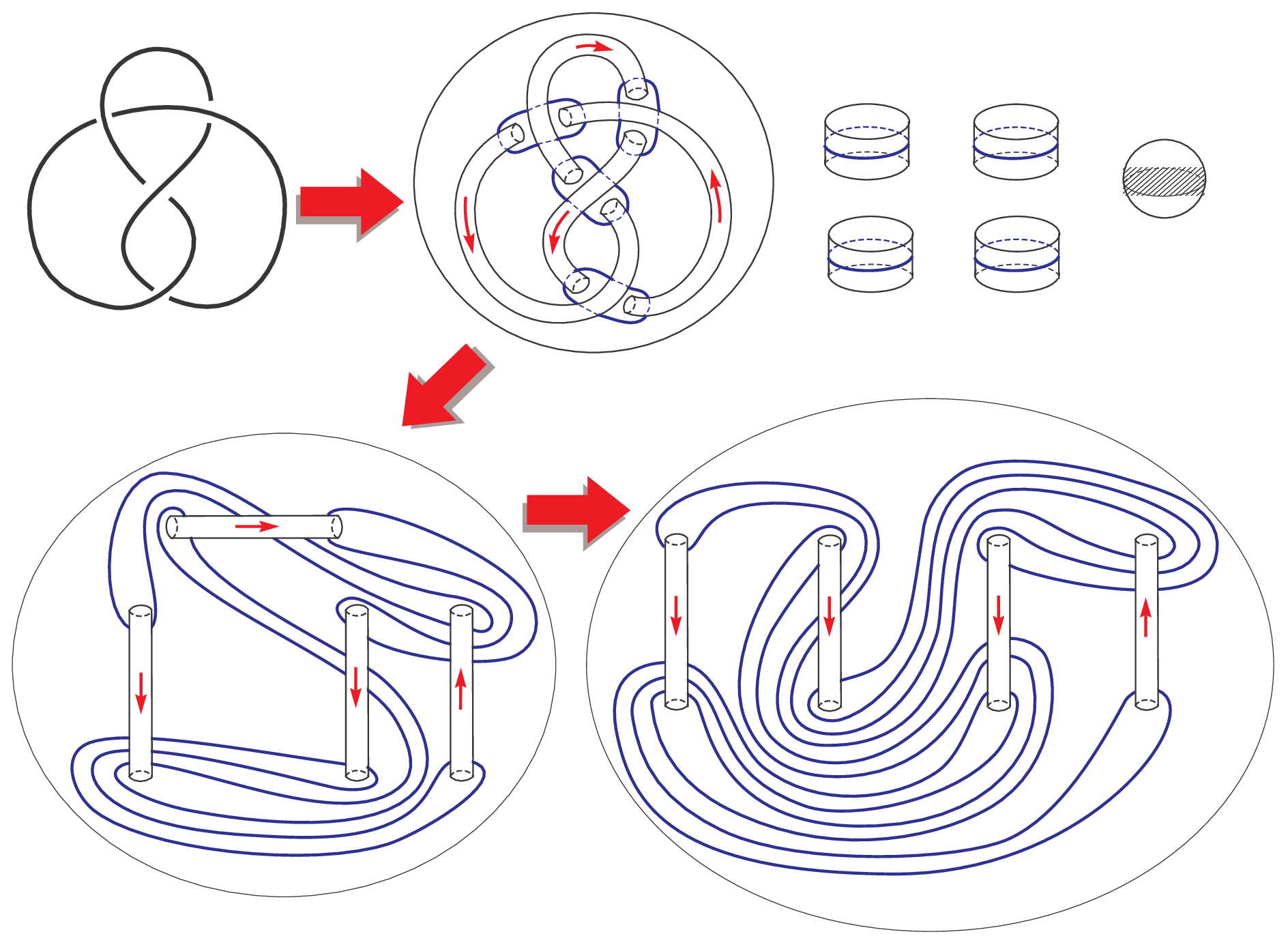}
\put(60.5,61){$\cup$}
\put(72,61){$\cup$}
\put(83,58){$\cup$}
\put(60.5,53){$\cup$}
\put(72,53){$\cup$}
\put(66,46){\small 2-handles}
\put(85,52){\small 3-handle}
\put(6,45){$K=4_1$}
\put(45,42){$H_4$}
\put(33,57){\small $1$}
\put(49.5,69){\small $2$}
\put(40,55){\small $3$}
\put(56,61){\small $4$}
\put(12,18){\small $1$}
\put(19,28){\small $2$}
\put(24,18){\small $3$}
\put(38,18){\small $4$}
\put(54,24){\small $1$}
\put(66,24){\small $2$}
\put(80,24){\small $3$}
\put(91,24){\small $4$}
\end{overpic}
\end{minipage}
\]
\caption{Decomposition of the exterior $E_K$ for the figure-eight knot. 
The attaching curves of the $2$-handles are shown on the boundary of 
the handlebody $H_4$. The meridians of $K$ are right-handed, 
with orientations indicated by arrows on the tubes.}
\label{fig_decomp_EK}
\end{figure}

In this setting,  $\K_{-1}(E_K)$ admits the following presentation. 
\begin{theorem}[\cite{Przytycki1}]\label{thm_van}
There is an isomorphism
\[
\K_{-1}(E_K) \cong
\frac{\K_{-1}(H_n)}
{\left\langle z - sl_b(z) \;\middle|\;
\begin{minipage}{8.5cm}
$z$: any loop in $\K_{-1}(H_n)$,\\
$b$: any band connecting $z$ to an attaching curve
\end{minipage}
\right\rangle}.
\]
Here, $sl_b(z)$ denotes the handle slide of $z$ along the band $b$
connecting $z$ to an attaching curve. 
\end{theorem}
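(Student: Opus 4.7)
The plan is to establish the isomorphism by constructing the natural surjection induced by the inclusion $H_n \hookrightarrow E_K$ and identifying its kernel with the handle-slide ideal.

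First, I would define a $\C$-algebra homomorphism $\iota_*:\K_{-1}(H_n) \to \K_{-1}(E_K)$ sending a framed link in $H_n$ to the same link viewed in $E_K$. This is well-defined because the Kauffman bracket skein relations are local, so every skein identity that holds in $H_n$ persists in $E_K$. Surjectivity then follows from general position: any framed link $L \subset E_K$ can be isotoped off the cocores of the $n$ attached $2$-handles and pushed entirely into $H_n$, and the resulting class in $\K_{-1}(H_n)$ maps to $[L]$.

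Second, I would check that each generator $z - sl_b(z)$ lies in $\mathrm{Ker}(\iota_*)$. Geometrically, a handle-slide of a strand of $z$ along a band $b$ across the $2$-handle attached along an attaching curve is realized by an ambient isotopy in $E_K$ that pushes the strand over the cocore of that $2$-handle. Hence $z$ and $sl_b(z)$ are framed-isotopic in $E_K$, so $\iota_*(z)=\iota_*(sl_b(z))$. This produces a surjection $\overline{\iota_*}:\K_{-1}(H_n)/I \twoheadrightarrow \K_{-1}(E_K)$, where $I$ denotes the handle-slide ideal in the statement.

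Third, I would prove injectivity by constructing an inverse. Given a framed link $L \subset E_K$, isotope it into $H_n$ and define $\psi([L])$ to be the resulting class in $\K_{-1}(H_n)/I$. To see $\psi$ is well-defined, I would decompose any ambient isotopy of $L$ in $E_K$ into elementary moves of three types: (i) ambient isotopy taking place entirely inside $H_n$, (ii) a local Kauffman bracket skein move in $H_n$, and (iii) a single passage of a strand across one of the $2$-handles. Moves (i) and (ii) do not change the class in $\K_{-1}(H_n)$, and move (iii) modifies the underlying link in $H_n$ by a handle-slide along the relevant attaching curve, thereby changing the class only by an element of $I$. Once $\psi$ is shown to be well-defined, it is manifestly a two-sided inverse to $\overline{\iota_*}$.

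The main obstacle is the third step: one must verify rigorously that any isotopy of $L$ in $E_K$ can be placed in a generic form whose elementary steps are exactly (i)--(iii), and that each passage across a $2$-handle agrees with an honest handle-slide modulo moves of types (i) and (ii). This requires a transversality argument for isotopies with respect to the cocores of the $2$-handles, together with a careful accounting of the bands $b$ and of any extra crossings that appear during a slide and are subsequently absorbed by the skein relations of $H_n$. Once this technical point is controlled, the remainder of the argument is a straightforward bookkeeping exercise.
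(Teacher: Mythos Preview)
The paper does not supply its own proof of this theorem: it is quoted as a known result from \cite{Przytycki1} (Przytycki's handle-sliding lemma for the Kauffman bracket skein module) and is used as a black box in the proof of Theorem~\ref{defpoly_S0K}. So there is no ``paper's proof'' to compare against.

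That said, your outline is the standard argument and is essentially what Przytycki does. The inclusion-induced map and its surjectivity via general position with the cocores are exactly right, and your identification of the handle-slide relators with isotopies across the $2$-handles is the correct geometric content. The only genuinely delicate part is the one you flag yourself: putting an arbitrary isotopy in $E_K$ into general position with respect to the cocore disks so that it factors into a finite sequence of moves of types (i)--(iii). One point worth tightening is that a single transverse passage of a strand across a cocore need not literally be a band-sum with the attaching curve on the nose; one must also allow the framing/twist correction, which in the skein algebra is absorbed by the relation $z - sl_{b_\times}(z) = -(z - sl_{b_o}(z)) + z\,(c - (-2))$ (as the paper itself notes later when reducing to untwisted bands). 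With that caveat, your sketch is correct and complete in spirit.
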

We remark that the right-hand side of Theorem \ref{thm_van} depends on the choice of a diagram $D_K$.  
To make this dependence explicit, we denote by $S_{D_K}$ the ideal that defines 
the quotient on the right-hand side:  
\[
S_{D_K} :=
\left\langle z - sl_b(z) \;\middle|\;
\begin{minipage}{8.5cm}
$z$: any loop in $\K_{-1}(H_n)$,\\
$b$: any band connecting $z$ to an attaching curve
\end{minipage}
\right\rangle.
\]
We refer to $S_{D_K}$ as the \emph{sliding ideal} associated with the diagram $D_K$.

We then describe a procedure to obtain a set $J_{S_0(K)}$ based on the above theorems.  
As shown in Theorem 4.7 of \cite{Przytycki-Sikora2}, the nilradical of $\K_{-1}(H_n)$ is trivial,  
so that the map 
\[
\varphi: \K_{-1}(H_n) \longrightarrow \mathbf{C}[X(F_n)]
\] 
is an isomorphism.  It then follows from Theorems \ref{thm_bps} and \ref{thm_van} that
\begin{eqnarray*}
\mathbf{C}[X(K)] &\cong& \K_{-1}(E_K)/\sqrt{0}\\
&\cong& \K_{-1}(H_n)/\sqrt{S_{D_K}}\\
&\cong& \mathbf{C}[X(F_n)]/\varphi(\sqrt{S_{D_K}})\\
&=& \mathbf{C}[X(F_n)]/\sqrt{\varphi(S_{D_K})}.
\end{eqnarray*}
Let $P=\langle \mathrm{(P1)},\ldots,\mathrm{(P4)}\rangle$.
We denote by 
\[
\psi : \mathbf{C}[X(F_n)] \to \C[x_i;x_{ij};x_{ijk}]/\sqrt{P}
\]
the ring isomorphism defined by $\psi(x)=\bar{x}+\sqrt{P}$ for
$x \in \mathbf{C}[X(F_n)]$, where $\bar{x}$ is obtained from $x$ by replacing
$t_{m_i}$, $t_{m_i m_j}$, and $t_{m_i m_j m_k}$
with $-x_i$, $-x_{ij}$, and $-x_{ijk}$, respectively.
The algebra $\K_{-1}(H_n)$ is finitely generated (see \cite{Bullock2}) and Noetherian 
by the isomorphism $\varphi$. 
Let $u_1,\ldots,u_m$ be skeins in $\K_{-1}(H_n)$ generating the ideal $S_{D_K}$.
With this setup, we obtain the following correspondence:
\begin{eqnarray*}
\mathbf{C}[X(K)]  
&\cong& \mathbf{C}[X(F_n)]/\sqrt{\langle \varphi(u_1),\cdots, \varphi(u_m) \rangle}\\
&\cong & (\C[x_i;x_{ij};x_{ijk}]/\sqrt{P})
/\psi(\sqrt{\langle \varphi(u_1),\cdots,\varphi(u_m) \rangle})\\
& = & (\C[x_i;x_{ij};x_{ijk}]/\sqrt{P})
/\sqrt{\psi(\langle \varphi(u_1),\cdots, \varphi(u_m) \rangle)}\\
& = & (\C[x_i;x_{ij};x_{ijk}]/\sqrt{P})
/\sqrt{\langle \overline{\varphi(u_1)}+\sqrt{P},\cdots, \overline{\varphi(u_m)}
+\sqrt{P} \rangle}\\
&\cong& 
\C[x_i;x_{ij};x_{ijk}]
/\sqrt{\langle \overline{\varphi(u_1)},\cdots, \overline{\varphi(u_m)} \rangle +\sqrt{P}}\\
&=& \C[x_i;x_{ij};x_{ijk}]
/\sqrt{\langle \overline{\varphi(u_1)},\cdots, \overline{\varphi(u_m)} \rangle + P}. 
\end{eqnarray*}
The last equality follows from $\sqrt{\sqrt{I}+\sqrt{J}}=\sqrt{I+J}$ for ideals $I$ and $J$. 
The above argument shows that, for $X(K) \subset \C^{n+{n \choose 2} + {n \choose 3}}$,  
\[
I_{X(K)}=\sqrt{\langle \overline{\varphi(u_1)},\cdots, \overline{\varphi(u_m)} \rangle + P} 
\subset \C[x_i;x_{ij};x_{ijk}].
\]
Since the algebraic set $\tilde{t}(\mathfrak{S}_0(K))$ is the intersection of
$X(K)$ with the hyperplane defined by $x_i=0$ $(i=1,\cdots,n)$,
we obtain the following description of the ideal
$I_{\tilde{t}(\mathfrak{S}_0(K))} \subset \C[x_i;x_{ij};x_{ijk}]$:
\begin{eqnarray*}
I_{\tilde{t}(\mathfrak{S}_0(K))}
&=& \sqrt{I_{X(K)} + \langle x_1,\cdots,x_n\rangle}\\
&=& \sqrt{ \langle \overline{\varphi(u_1)},\cdots, \overline{\varphi(u_m)} \rangle
+ P + \langle x_1,\cdots,x_n\rangle}\\
&=& \sqrt{ \langle \overline{\varphi(u_1)},\cdots, \overline{\varphi(u_m)} \rangle
+ \langle {\rm (GH)} \rangle + \langle x_1,\cdots,x_n\rangle}
\end{eqnarray*}
Finally, the ideal $I_{S_0(K)}$ of the algebraic set
$S_0(K)=p(\tilde{t}(\mathfrak{S}_0(K))) \subset \C^{{n \choose 2}+{n \choose 3}}$,
where $p$ is the projection defined right before Theorem $\ref{defpoly_S0K}$, 
is obtained as the elimination ideal of
$I_{\tilde{t}(\mathfrak{S}_0(K))}$.
More precisely, the ideal $I_{S_0(K)} \subset \C[x_{ij};x_{ijk}]$
can be computed by the following basic arguments:
\begin{eqnarray*}
I_{S_0(K)}&=& I_{\tilde{t}(\mathfrak{S}_0(K))} \cap \C[x_{ij};x_{ijk}]\\
&=& \sqrt{ \langle \overline{\varphi(u_1)},\cdots, \overline{\varphi(u_m)} \rangle
+ \langle \mathrm{(GH)} \rangle + \langle x_1,\cdots, x_n \rangle}
\cap \C[x_{ij};x_{ijk}]\\
&=& \sqrt{ (\langle \overline{\varphi(u_1)},\cdots, \overline{\varphi(u_m)} \rangle
+ \langle \mathrm{(GH)} \rangle + \langle x_1,\cdots, x_n \rangle)
\cap \C[x_{ij};x_{ijk}]}
\end{eqnarray*}
Hence, the generators of the elimination ideal
\[
(\langle \overline{\varphi(u_1)},\cdots, \overline{\varphi(u_m)} \rangle
+ \langle \mathrm{(GH)} \rangle + \langle x_1,\cdots, x_n \rangle)
\cap \C[x_{ij};x_{ijk}]
\]
determine the defining polynomials of $S_0(K)$. 

We can compute the generators of the above elimination ideal effectively 
under the trace-free condition on $\K_{-1}(H_n)$, as follows. 
Let $m_1,\cdots,m_n$ be the standard generators\footnote{We use the letter $m$ 
since these generators correspond to meridional loops in $\K_{-1}(E_K)$.}
of $\pi_1(H_n)$, and let $s_{i_1\cdots i_k}$ denote the skein in $\K_{-1}(H_n)$
represented by a loop freely homotopic to $m_{i_1}\cdots m_{i_k}$
(see Figure \ref{skein_s}).
With this notation, $\K_{-1}(H_n)$ can be regarded as a $\C$-algebra generated by
$s_i, s_{ij}, s_{ijk}$ (compare with $\mathbf{C}[X(F_n)]$).

\begin{figure}[hbtp]
\[
\begin{minipage}{4.5cm}
\begin{overpic}[width=\hsize]{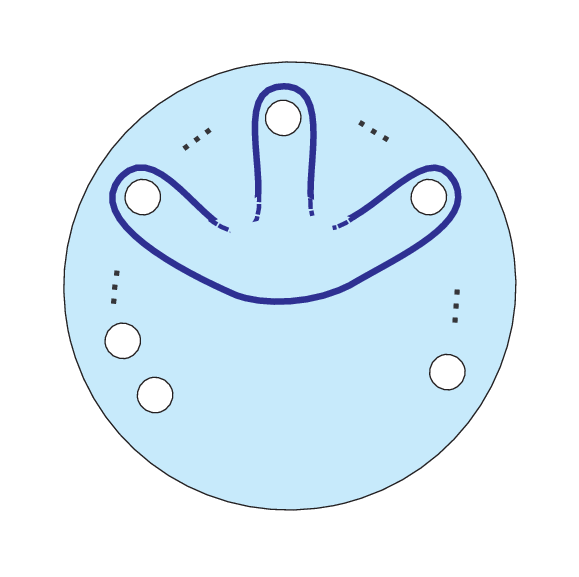}
\put(12,75){$i_1$}
\put(45,95){$i_j$}
\put(83,75){$i_k$}
\put(5,35){$2$}
\put(13,20){$1$}
\put(87,30){$n$}
\end{overpic}
\end{minipage}
\vspace*{-1cm}
\]
\caption{Skein $s_{i_1 \dots i_k}$ for $1 \leq i_1, \cdots, i_k \leq n$ 
in the handlebody $H_n \cong D_n \times [0,1]$, regarded as 
a planar curve on $D_n \times \{0\}$. Every meridian $m_i$ is oriented 
clockwise. }
\label{skein_s}
\end{figure}

For an ideal $I$ in $\mathbf{C}[X(F_n)]$, let $\bar{I}$ denote the ideal in $\C[x_i;x_{ij};x_{ijk}]$ 
obtained by substituting
$-x_i$, $-x_{ij}$, and $-x_{ijk}$ for $t_{m_i}$, $t_{m_i m_j}$, and $t_{m_i m_j m_k}$, respectively.
Then, using the isomorphisms $\varphi$ and $\psi$ together with standard arguments, we obtain
\[
\langle \overline{\varphi(u_1)},\cdots, \overline{\varphi(u_m)} \rangle
+ \langle x_1,\cdots,x_n \rangle +\langle {\rm (GH)} \rangle
=
\overline{\varphi\left(S_{D_K} + \langle s_1,\cdots,s_n \rangle\right)}  +\langle {\rm (GH)} \rangle.
\]
The ideal $S_{D_K}$ for $S_{D_K} + \langle s_1,\dots,s_n \rangle \subset \K_{-1}(H_n)$ 
can be regarded as $S_{D_K}$ with the relations $s_1=\cdots=s_n=0$, 
which we call the trace-free condition in $\K_{-1}(H_n)$. 
This viewpoint significantly simplifies the computation of the ideal 
$S_{D_K} + \langle s_1,\dots,s_n \rangle$, 
and consequently that of the elimination ideal.

We now describe the structure of the sliding ideal $S_{D_K}$ under the trace-free condition.
Let $\K_{-1,\mathrm{TF}}(H_n)$ denote the Kauffman bracket skein algebra $\K_{-1}(H_n)$
specialized to the trace-free condition, and let $S_{D_K}|_{s_i=0}$ denote the corresponding
specialization of $S_{D_K}$.
\begin{lemma}\label{lem_sdk}
The ideal $S_{D_K}|_{s_i=0}$ coincides with the ideal $F_{D_K}$
in $\K_{-1,\mathrm{TF}}(H_n)$ defined by
\begin{eqnarray*}
F_{D_K}
:=\left\langle 
\left.
\begin{array}{l}
s_{ak}-s_{ij}s_{ai}+s_{aj} \ \text{{\rm : (F2)}},\\ 
s_{bck}-s_{ij}s_{bci}+s_{bcj} \ \text{{\rm : (F3)}}
\end{array}
\right| \begin{array}{l}
\mbox{$(i,j,k)$ {\rm : any Wirtinger triple for $D_K$}},\\
1 \leq a \leq n,\ 1 \leq b < c \leq n
\end{array}
\right\rangle.
\end{eqnarray*}
\end{lemma}
We refer to $F_{D_K}$ as the \emph{fundamental ideal} associated with $D_K$.
In the proof of Lemma \ref{lem_sdk}, we show that the trace-free specialization 
$S_{D_K}|_{s_i=0}$ is generated by finitely many ``non-winding band sums'', 
and these band sums are generated by (F2) and (F3) in $F_{D_K}$.

To compute a generating set for $S_{D_K}$, we isotope the handlebody $H_n$
into the product $D_n \times [0,1]$, where $D_n$ is an $n$-punctured disk.
Under this isotopy, the attaching curves of the $2$-handles 
can be regarded as curves on the boundary of $D_n \times [0,1]$. 
We then project these attaching curves onto $D_n \times \{0\}$.

\begin{figure}[hbtp]
\[
\begin{minipage}{11cm}
\begin{overpic}[width=\hsize]{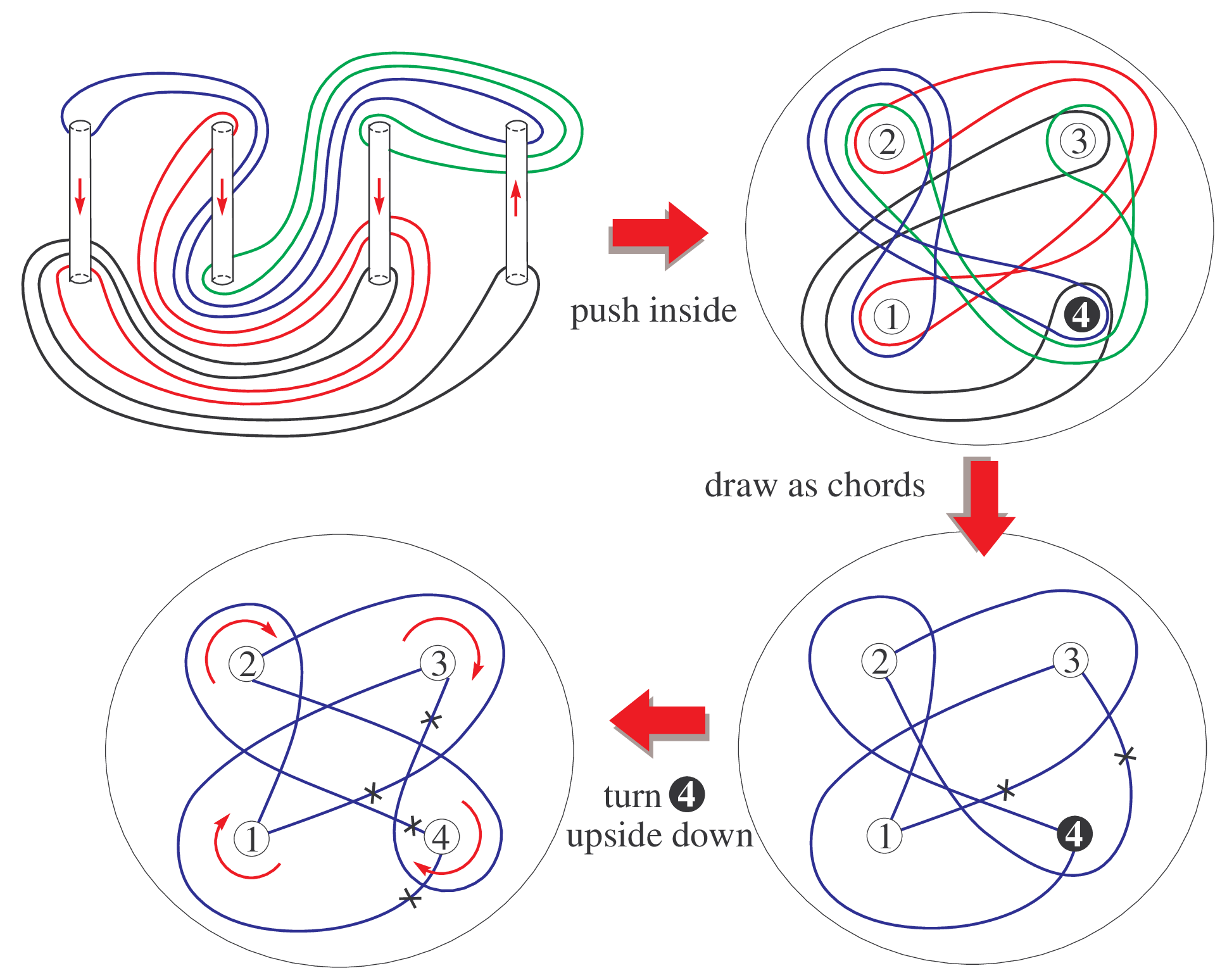}
\put(9,63){\small $1$}
\put(21,63){\small $2$}
\put(33,63){\small $3$}
\put(44,63){\small $4$}
\end{overpic}
\end{minipage}
\]
\caption{Attaching curves and chords. Each of the cross signs ``$\times$" 
on chords presents a crossing of the attaching curve where the sign is.}
\label{attaching_curve}
\end{figure}

For simplicity, in Figure \ref{attaching_curve}, they are drawn as chords 
on $D_n \times \{0\}$.
These projected curves on $D_n \times \{0\}$ represent the relations in the Wirtinger presentation 
of the knot group $G(K)$ associated with $D_K$.  
Thus, in the projection process, the attaching curves are considered up to homotopy. 

As seen in Figure \ref{attaching_curve}, any attaching curve on $D_n \times \{0\}$ 
falls into one of the following two types:
\[
\begin{minipage}{2.7cm}\includegraphics[width=\hsize]{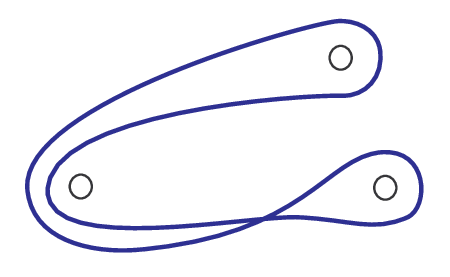}\end{minipage},
\hspace*{0.5cm}
\begin{minipage}{3cm}\includegraphics[width=\hsize]{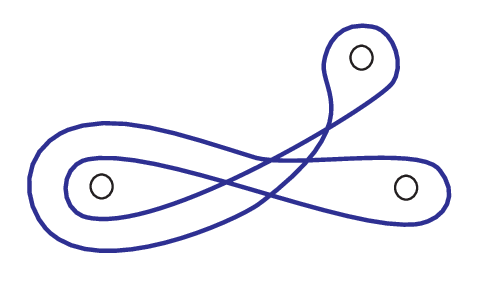}\end{minipage}.
\]
This result also follows from the relation $m_k=m_im_jm_i^{-1}$ in the Wirtinger presentation. 
In the following, we prove Lemma \ref{lem_sdk} for the first type, 
because the argument for the second type is entirely analogous.

\fbox{\bf Step 1}
We begin with the original sliding ideal $S_{D_K}$ in $\K_{-1}(H_n)$.
Let $z \in \K_{-1}(H_n)$ be a loop, and let $b$ be a band connecting $z$
to an attaching curve. Performing the handle slide of $z$ along $b$, we obtain
\[
z-sl_b(z)
=z-
\begin{minipage}{2.5cm}
\begin{overpic}[width=\hsize]{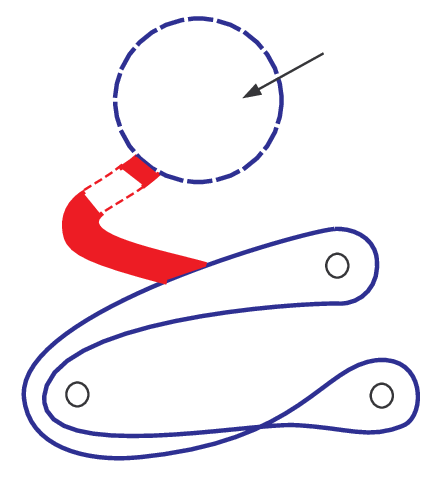}
\put(70,90){\footnotesize resolve $z$ by the skein relations}
\put(36,77){$z$}
\put(6,62){$b$}
\end{overpic}
\end{minipage}, 
\]
where the band $b$ is partially omitted, as indicated by the blank.
This description of a handle slide is referred to as a \emph{band sum}.

In what follows, it suffices to consider only \emph{untwisted} band sums.
Indeed, let $b_{\times}$ be a band with a single twist connecting $z$ to 
an attaching curve $c$. Then the expression $z-sl_{b_{\times}}(z)$ can be reduced to
\[
-(z-sl_{b_o}(z)) + z\,(c-(-2)),
\]
where $b_o$ denotes the band obtained from $b_{\times}$ by smoothing the twist.
Note that $c-(-2)$ can be written as $c-sl_{b'}(c)$ for an untwisted band $b'$ 
connecting $c$ to a parallel copy of itself.

We first resolve the loop $z$ to which the band $b$ is attached using the skein relations. 
Since $\K_{-1}(H_n) \cong \mathbf{C}[X(F_n)]$, the loop $z$ can be expressed as  
\[
z=\fbox{$\bigcirc$} f+\sum_{1 \leq i \leq n}\fbox{$s_i$}f_i
+\sum_{1 \leq i<j \leq n}\fbox{$s_{ij}$}f_{ij}+\sum_{1 \leq i<j<k\leq n}\fbox{$s_{ijk}$}f_{ijk}, 
\]
where $f, f_i,f_{ij}$, and $f_{ijk}$ are polynomials in the skeins $s_i,s_{ij},s_{ijk}$. 
Each boxed term in the above expression
indicates the loop to which the band $b$ will be attached. 
For example, these loops are determined by the skein relations as follows:
\begin{eqnarray}\label{bandsum}
\begin{minipage}{11cm}
\begin{overpic}[width=\hsize]{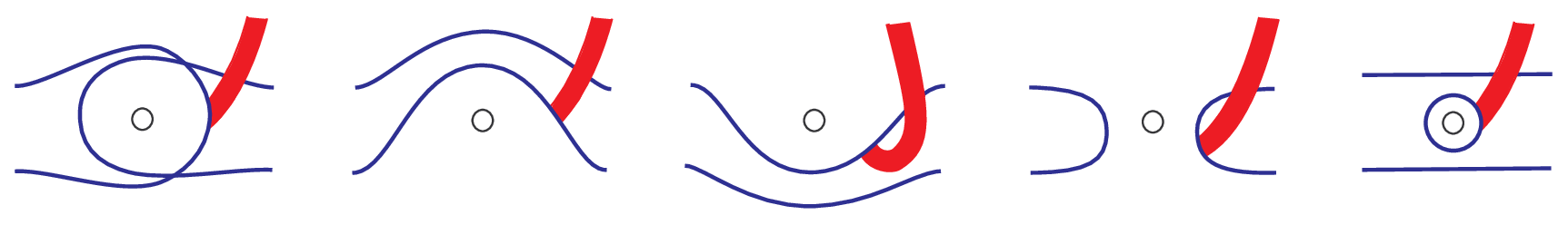}
\put(18,6){$=$}
\put(39,6){$+$}
\put(61,6){$+$}
\put(83,6){$+$}
\end{overpic}
\end{minipage}. 
\end{eqnarray}
Thus, by continuing this resolution until every loop is expressed 
in terms of $\bigcirc=-2, s_i, s_{ij}$ or $s_{ijk}$, we obtain the above expression. 
Applying the same procedure to $sl_b(z)$, we find that 
$sl_b(z)$ can be written as  
\[
\fbox{
\begin{minipage}{2.5cm}
\begin{overpic}[width=\hsize]{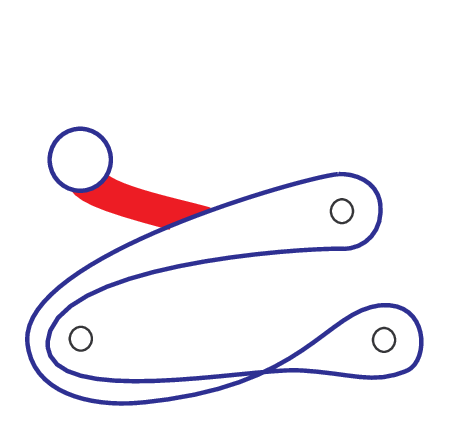}
\put(35,60){\small $b_0$}
\end{overpic}
\end{minipage}}f
+\Sigma_{i}
\fbox{\begin{minipage}{2.5cm}
\begin{overpic}[width=\hsize]{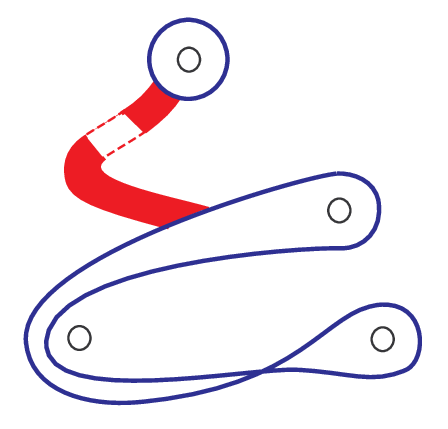}
\put(5,70){\small $b_1$}
\put(55,80){\small $i$}
\end{overpic}
\end{minipage}}f_i
+\Sigma_{i,j}
\fbox{\begin{minipage}{2.5cm}
\begin{overpic}[width=\hsize]{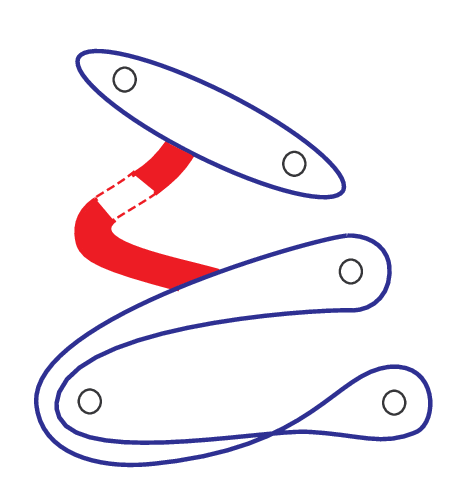}
\put(5,65){\small $b_2$}
\put(25,90){\small $i$}
\put(70,72){\small $j$}
\end{overpic}
\end{minipage}}f_{ij}
+\Sigma_{i,j,k}
\fbox{\begin{minipage}{2.5cm}
\begin{overpic}[width=\hsize]{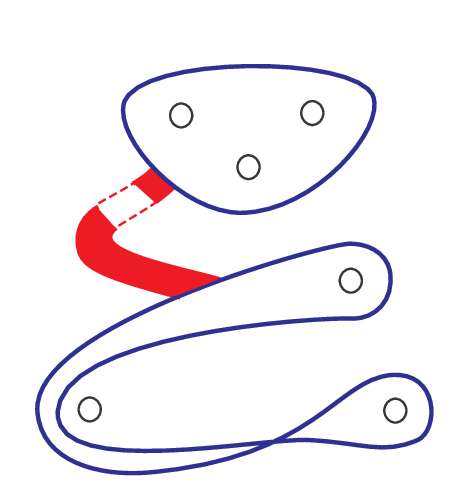}
\put(5,65){\small $b_3$}
\put(32,90){\small $i$}
\put(65,90){\small $j$}
\put(43,47){\small $k$}
\end{overpic}
\end{minipage}}f_{ijk}.
\]
It follows that any handle slide is expressed by 
\[
(\bigcirc-sl_{b_0}(\bigcirc))f+\Sigma_i(s_i-sl_{b_1}(s_i))f_i
+\Sigma_{i,j}(s_{ij}-sl_{b_2}(s_{ij}))f_{ij}+\Sigma_{i,j,k}(s_{ijk}-sl_{b_3}(s_{ijk}))f_{ijk}.
\]
and hence 
\[
S_{D_K}=\left\langle \bigcirc-sl_b(\bigcirc),\ s_i-sl_b(s_i),\ s_{ij}-sl_b(s_{ij}),\ 
s_{ijk}-sl_b(s_{ijk})\mid \mbox{\small $b$: any band} \right\rangle. 
\]

\fbox{\bf Step 2}
We consider $sl_b(s)$ for a skein $s$ in $\{\bigcirc=-2,\ s_i,\ s_{ij},\ s_{ijk}\}$.
If the band $b$ is winding, that is, if it goes around at least one puncture,
then $sl_b(s)$ can be expressed as a sum of band sums associated with
non-winding bands, with coefficients in $\K_{-1}(H_n)$,
by the skein relations (\ref{bandsum}).
Indeed, if $b$ is a winding band that goes once around a puncture,
then $sl_b(s)$ can be written as
\begin{eqnarray*}
\begin{minipage}{2.5cm}
\begin{overpic}[width=\hsize]{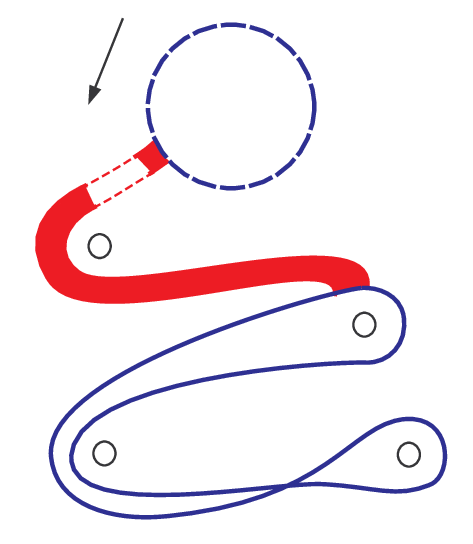}
\put(10,70){\small $b$}
\put(0,99){\footnotesize a winding band}
\put(25,55){\small $a$}
\put(38,77){$s$}
\end{overpic}
\end{minipage}
\hspace*{-0.5cm}
&=&
\begin{minipage}{2.5cm}
\begin{overpic}[width=\hsize]{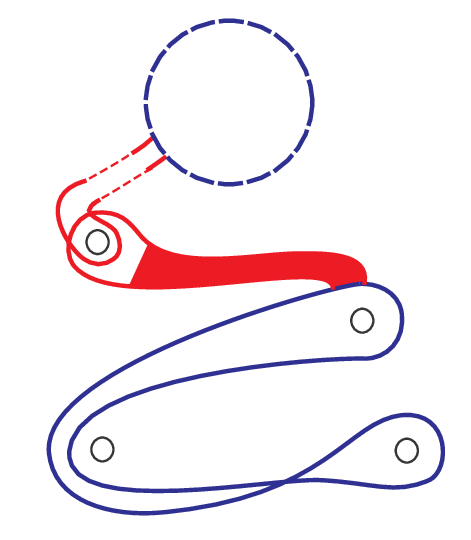}
\put(0,50){\small $a$}
\put(38,77){$s$}
\put(10,70){$c$}
\put(60,58){\small $b_1$}
\end{overpic}
\end{minipage}
-
\begin{minipage}{2.5cm}
\begin{overpic}[width=\hsize]{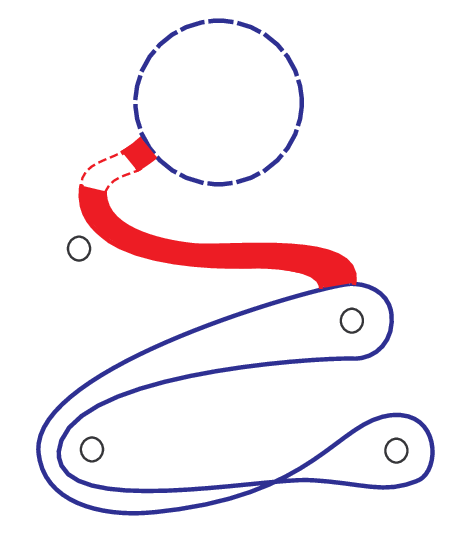}
\put(0,50){\small $a$}
\put(36,78){$s$}
\put(60,58){\small $b_2$}
\end{overpic}
\end{minipage}
-
\begin{minipage}{2.5cm}
\begin{overpic}[width=\hsize]{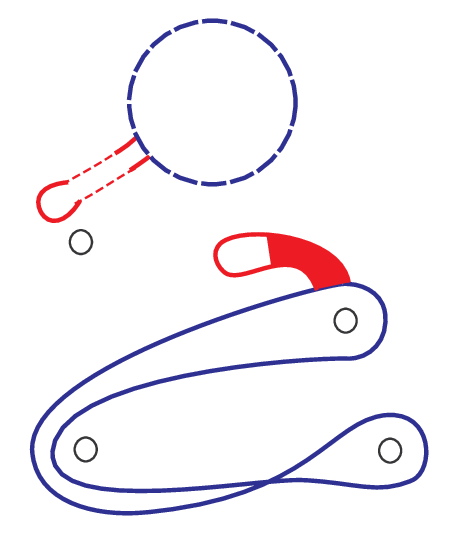}
\put(2,50){\small $a$}
\put(35,78){$s$}
\put(60,58){\small $b_3$}
\end{overpic}
\end{minipage}
-
\begin{minipage}{2.5cm}
\begin{overpic}[width=\hsize]{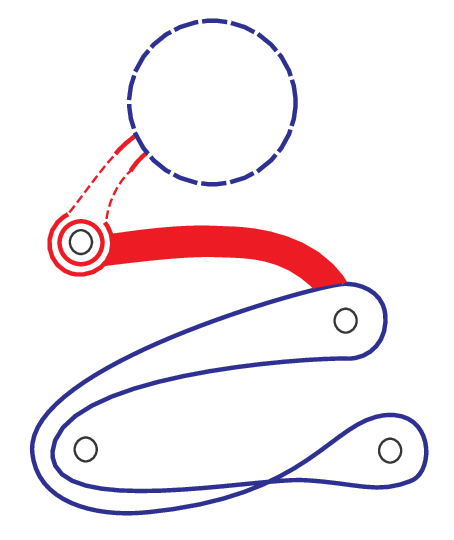}
\put(0,50){\small $a$}
\put(10,70){$c$}
\put(35,78){$s$}
\put(60,58){\small $b_4$}
\end{overpic}
\end{minipage}\\
&=&(s \sharp s_a) \cdot sl_{b_1}(s_a)-sl_{b_2}(s)-s \cdot sl_{b_3}(\bigcirc)
-s_a \cdot sl_{b_4}(s \sharp s_a), 
\end{eqnarray*}
where $s \sharp s_a$ denotes the band sum of $s$ and $s_a$ 
along a non-winding band $c$. Consequently, $s - sl_b(s)$ can be expressed as
\[
(s \sharp s_a) \cdot 
\left(s_a-sl_{b_1}(s_a)\right)
-\left(s-sl_{b_2}(s)\right)
-s \cdot \left(-2-sl_{b_3}(\bigcirc)\right)
-s_a \cdot \left(s \sharp s_a-sl_{b_4}(s \sharp s_a)\right)  
\]
where each $b_i$ is a non-winding band. 

We now show that $S_{D_K}$ is generated by
\begin{eqnarray}\label{gen-sdk}
-2-sl_{*}(\bigcirc),\ s_i-sl_{*}(s_i),\ s_{ij}-sl_{*}(s_{ij}),\ s_{ijk}-sl_{*}(s_{ijk}),
\end{eqnarray}
where $*$ ranges over all non-winding bands.
The claim is immediate when $s=\bigcirc$, $s_i$, or $s_{ij}$.
Suppose that $s=s_{ijk}$.
Then a band sum $s \sharp s_a$ may yield the skein $s_{aijk}$.
By applying the skein relation (\ref{bandsum}), the skein $s \sharp s_a$  
reduces to a polynomial $p$ in $\bigcirc$, $s_i$, $s_{ij}$, and $s_{ijk}$, 
with four boxed terms \fbox{$\bigcirc$}, \fbox{$s_i$}, \fbox{$s_{ij}$}, and \fbox{$s_{ijk}$}. 
On the other hand, the term $sl_{b_4}(s \sharp s_a)$ corresponds to the band sum 
$sl_{b_4}(s_{aijk})$ along a non-winding band $b_4$. 
Applying the skein relation (\ref{bandsum}) to this band sum, 
we see that $sl_{b_4}(s_{aijk})$ is expressed as the polynomial obtained from $p$ 
by replacing each boxed term with its corresponding non-winding band sum
arising from (\ref{bandsum}). This implies that $s \sharp s_a-sl_{b_4}(s \sharp s_a)$ 
is generated by the skeins in (\ref{gen-sdk}), which proves the claim in this case. 

By induction, for any band $b$, continuing the above process until all winding bands are eliminated, 
we obtain the following description of $s-sl_{b}(s)$: 
\[
\left(-2-sl_{*}(\bigcirc)\right)f
+\Sigma_i \left(s_i-sl_{*}(s_i)\right)f_i
+\Sigma_{ij} \left(s_{ij}-sl_{*}(s_{ij})\right)f_{ij}
+\Sigma_{ijk} \left(s_{ijk}-sl_{*}(s_{ijk})\right)f_{ijk}, 
\]
where $f,f_i,f_{ij},f_{ijk} \in \K_{-1}(H_n)$ and $*$ denotes unspecified non-winding bands. 
Hence, 
\[
S_{D_K}=\left\langle 
\bigcirc-sl_{*}(\bigcirc),s_i-sl_{*}(s_i),s_{ij}-sl_{*}(s_{ij}),s_{ijk}-sl_{*}(s_{ijk}) \mid 
\mbox{\small $*:$ any non-winding band} \right\rangle.
\]
Note that for a skein $s$ in $\left\{\bigcirc=-2,\ s_i,\ s_{ij},\ s_{ijk}\right\}$ there exist 
only finitely many non-winding bands (up to homotopy) connecting $s$ to an attaching curve. 
Therefore the above generating set of $S_{D_K}$ determines the generators $u_1,\cdots,u_m$.  

\fbox{\bf Step 3} 
From now on, we work with the specialization $S_{D_K}|_{s_i=0}$. 
It follows from (\ref{bandsum}) that the skein relations under the trace-free condition 
$s_1=\cdots=s_n=0$ (which we call the trace-free skein relations) are given as follows: 
\begin{eqnarray}\label{ff-formula}
\begin{minipage}{9cm}\includegraphics[width=\hsize]{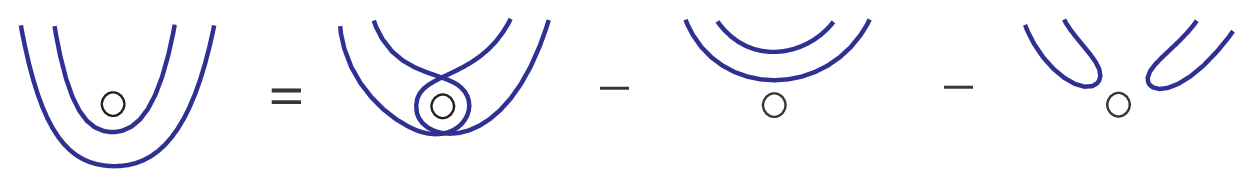}\end{minipage}. 
\end{eqnarray}
If $sl_b(s_*)$ is a non-winding band sum of 
$s_*\in \left\{\bigcirc=-2,\ s_i=0,\ s_{ij},\ s_{ijk}\right\}$ 
and an attaching curve disjoint from $s_*$, then by (\ref{ff-formula})  
we obtain the following expression for any Wirtinger triple $(p,q,r)$: 
\[
s_*-sl_b(s_*)
=s_*
-\begin{minipage}{3cm}
\begin{overpic}[width=\hsize]{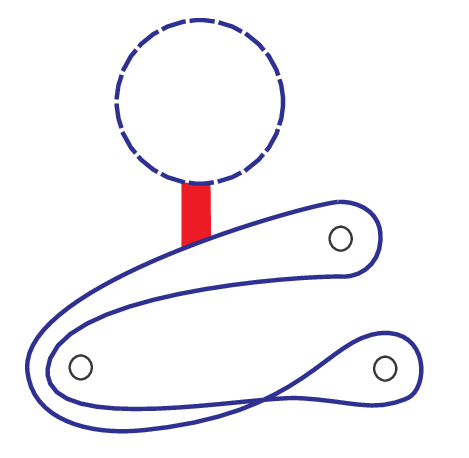}
\put(27,48){\small $b$}
\put(38,72){$s_*$}
\put(25,18){$p$}
\put(95,18){$q$}
\put(88,45){$r$}
\end{overpic}
\end{minipage}
=s_*+(s_* \sharp s_{pr})s_{pq}-(s_* \sharp s_{qr}). 
\]
Since $s_* \sharp s_{pr}$ and $s_* \sharp s_{qr}$ are non-winding band sums (without twists), 
they can be expressed as polynomials in $s_{ij}$ and $s_{ijk}$ 
using the trace-free skein relations (\ref{ff-formula}). 
Consequently, the resulting polynomial lies in the ideal $F_{D_K}$, which is generated by (F2) and (F3) 
in $F_{D_K}$. 

To be more precise, if $s_*=s_i=0$, then for $i<p<q<r$ we have
\[
s_i-sl_b(s_i)
=(s_i \sharp s_{pr})\, s_{pq}- (s_i \sharp s_{qr})= s_{ipr} s_{pq}-s_{iqr}=s_{irr}-s_{pq}s_{irp}+s_{irq}, 
\]
which is one of (F3) in $F_{D_K}$. The remaining cases can be shown in a similar manner. 

When $s_*=s_{ij}$ and $i<j<p<q<r$, the trace-free skein relations give 
\begin{eqnarray*}
&&s_{ij} \sharp s_{pr} = s_{ijpr}
=\frac{1}{2}\bigl(s_{ip}s_{jr}-s_{ij}s_{pr}-s_{ir}s_{jp}\bigr),\\
&&s_{ij} \sharp s_{qr} = s_{ijqr}
=\frac{1}{2}\bigl(s_{iq}s_{jr}-s_{ij}s_{qr}-s_{ir}s_{jq}\bigr).
\end{eqnarray*}
Hence, $s_{ij}-sl_b(s_{ij})=s_{ij}+(s_{ij} \sharp s_{pr})s_{pq}-(s_{ij} \sharp s_{qr})$ is expressed by 
\[
-\frac{1}{2}s_{jr}(s_{ir}-s_{ip}s_{pq}+s_{iq})
+\frac{1}{2}s_{ij}(2-s_{pr}s_{pq}+s_{qr})
+\frac{1}{2}s_{ir}(s_{jr}-s_{jp}s_{pq}+s_{jq}),
\]
which is generated by (F2) in $F_{D_K}$. 
The remaining cases can be shown analogously.

For the case $s_*=s_{ijk}$, we use the relation 
\[
s_{abcde}
=\frac{1}{2}\left(
s_{be}s_{acd}-s_{ae}s_{bcd}-s_{ab}s_{cde}-s_{cd}s_{abe}
\right)
\ \ (1 \leq a < b < c < d < e \leq n), 
\]
which is given by the trace-free skein relations, 
to resolve $s_{ijk} \sharp s_{pr}$ and $s_{ijk} \sharp s_{qr}$.
For $i < j < k < p < q < r$, we obtain
\begin{eqnarray*}
s_{ijk}+(s_{ijk} \sharp s_{pr})s_{pq}-(s_{ijk} \sharp s_{qr})
&=&
-\frac{1}{2}s_{jr}(s_{ikr}-s_{pq}s_{ikp}+s_{ikq})
+\frac{1}{2}s_{ir}(s_{jkr}-s_{pq}s_{jkp}+s_{jkq})\\
&&
+\frac{1}{2}s_{ij}(s_{krr}-s_{pq}s_{kpr}+s_{kqr})
+\frac{1}{2}s_{ijr}(s_{kr}-s_{pq}s_{kp}+s_{kq}),
\end{eqnarray*}
which is generated by (F2) and (F3) in $F_{D_K}$. 
Here we have applied the identity
\[
s_{ijk}
=-s_{ijkrr}
=-\frac{1}{2}\left(
s_{jr}s_{ikr}-s_{ir}s_{jkr}-s_{ij}s_{krr}-s_{kr}s_{ijr}
\right).
\]

The same argument as above applies when $s_*$ intersects an attaching curve,
showing that $s_*-sl_b(s_*)$ still lies in $F_{D_K}$.
We therefore omit the details.
Hence, all generators of $S_{D_K}|_{s_i=0}$ can be reduced to (F2) and (F3) in $F_{D_K}$, 
and consequently
\[
S_{D_K}|_{s_i=0}=F_{D_K}, 
\]
proving Lemma \ref{lem_sdk}.

We now complete the proof of Theorem \ref{defpoly_S0K}.
By the equality $S_{D_K}|_{s_i=0}=F_{D_K}$, we obtain 
\[
I_{S_0(K)}
=\sqrt{\langle x_{ak}-x_{ij}x_{ai}+x_{aj}, x_{bck}-x_{ij}x_{bci}+x_{bcj}, {\rm (GH)} \rangle} 
\subset \C[x_{ij};x_{ijk}].
\]
This shows that the common zeros locus of 
\[
x_{ak}-x_{ij}x_{ai}+x_{aj}=\overline{\varphi({\rm (F2)})},\   
x_{bck}-x_{ij}x_{bci}+x_{bcj}=\overline{\varphi({\rm (F3)})},\ 
{\rm (GH)} 
\]
coincide with the algebraic set $S_0(K)$ in $\C^{{n \choose 2}+{n \choose 3}}$. 
For simplicity, we write $\overline{\varphi({\rm (F2)})}$ and 
$\overline{\varphi({\rm (F3)})}$ as (F2) and (F3), respectively. 
We call them the fundamental relations for $S_0(K)$. 

In fact, the relations (F3) can be eliminated using (F2) and (GH). 
Indeed, any point $(x_{ab};x_{pqr})$ satisfying (F2) and (GH)
must also satisfy (F3): $x_{abk}=x_{ij}x_{abi}-x_{abj}$ for each Wirtinger triple $(i,j,k)$.
For instance, if $x_{pqr}=0$ for all $(p,q,r)$, then (F3) holds trivially.
Otherwise, suppose that $x_{stu}\neq 0$ for some triple $(s,t,u)$. Then we have
\begin{eqnarray*}
x_{stu}x_{abk} &=& \frac{1}{2}
\left|
\begin{array}{ccc}
x_{sa} & x_{sb} & x_{sk}\\
x_{ta} & x_{tb} & x_{tk}\\
x_{ua} & x_{ub} & x_{uk}
\end{array}
\right|
= \frac{1}{2}
\left|
\begin{array}{ccc}
x_{sa} & x_{sb} & x_{ij}x_{si}-x_{sj}\\
x_{ta} & x_{tb} & x_{ij}x_{ti}-x_{tj}\\
x_{ua} & x_{ub} & x_{ij}x_{ui}-x_{uj}
\end{array}
\right|\\
&=& \frac{1}{2}x_{ij}
\left|
\begin{array}{ccc}
x_{sa} & x_{sb} & x_{si}\\
x_{ta} & x_{tb} & x_{ti}\\
x_{ua} & x_{ub} & x_{ui}
\end{array}
\right|
-\frac{1}{2}
\left|
\begin{array}{ccc}
x_{sa} & x_{sb} & x_{sj}\\
x_{ta} & x_{tb} & x_{tj}\\
x_{ua} & x_{ub} & x_{uj}
\end{array}
\right|\\
&=& x_{ij}x_{stu}x_{abi}-x_{stu}x_{abj}\\
&=& x_{stu}(x_{ij}x_{abi}-x_{abj}). 
\end{eqnarray*}
Since $x_{stu}\neq 0$, we obtain $x_{abk}=x_{ij}x_{abi}-x_{abj}$. 
This completes the proof of Theorem \ref{defpoly_S0K}.


\subsection{Trace-free slice and fundamental variety}\label{sec_obs}
We will observe how the relations (F2) and (GH) work to determine the trace-free slice $S_0(K)$. 
In the following, for a knot $K$ given by an $n$-crossing diagram $D_K$, 
we denote by $F_2(K)$ the common zero locus of (F2) associated with $D_K$: 
\[
F_2(K):=\left\{(x_{12},\cdots,x_{nn-1}) \in \C^{n \choose 2} 
\left|\ 
\begin{array}{l}
x_{ak}-x_{ij}x_{ai}+x_{aj}=0\ :\ {\rm (F2)}\\
\mbox{\small $(i,j,k)$: any Wirtinger triple of $D_K$, $a \in \{1,\cdots,n\}$}, 
\end{array}
\right.\right\},
\]
which is referred to as the fundamental variety of $K$. 
Note that $F_2(K)$ does not depend on the choice of a diagram $D_K$ 
up to biregular equivalence, which will be shown in the relationship of $F_2(K)$ 
to degree 0 abelian knot contact homology (see Proposition \ref{contact-F2K}). 

Since the trefoil knot is too simple to exhibit the phenomena of interest,
we first consider the figure-eight knot $4_1$, using the diagram and arc labels
shown in Figure \ref{fig_decomp_EK}, where the arc labels are indicated on the tubes.
In this setting, there exist four Wirtinger triples $(1,3,4),(2,1,4),(3,1,2)$, and $(4,2,3)$. 
By Theorem \ref{defpoly_S0K}, these yield the following 16 fundamental relations (F2): 
\[
\begin{array}{llll}
x_{14}=x_{13}, & x_{24}=x_{13}x_{12}-x_{23}, & x_{34}=x_{13}^2-2, & 2=x_{13}x_{14}-x_{34},\\
x_{14}=x_{12}^2-2, & x_{24}=x_{12}, & x_{34}=x_{12}x_{23}-x_{13}, & 2=x_{12}x_{24}-x_{14},\\
x_{12}=x_{13}^2-2, & 2=x_{13}x_{23}-x_{12}, & x_{23}=x_{13}, &x_{24}=x_{13}x_{34}-x_{14}\\
x_{13}=x_{24}x_{14}-x_{12}, & x_{23}=x_{24}^2-2, & 2=x_{24}x_{34}-x_{23}, & x_{34}=x_{24}.
\end{array}
\]
It follows from solving these equations that $F_2(4_1)$ is parametrized by $x_{13}$ satisfying 
\[
(x_{13}-2)(x_{13}^2+x_{13}-1)=0.  
\]
Thus we obtain the following fundamental variety of $K=4_1$: 
\[
F_2(4_1) \cong \left\{x_{13}=2,(-1\pm\sqrt{5})/2\right\}.
\]
Furthermore, $F_2(4_1)$ is isomorphic to $S_0(4_1)$. Indeed, the relations (GH): 
\[ 
x_{i_1 i_2 i_3}\cdot x_{j_1 j_2 j_3}
=\frac{1}{2}
\left|
\begin{array}{ccc}
x_{i_1 j_1} & x_{i_1 j_2} & x_{i_1 j_3}\\
x_{i_2 j_1} & x_{i_2 j_2} & x_{i_2 j_3}\\
x_{i_3 j_1} & x_{i_3 j_2} & x_{i_3 j_3}
\end{array}
\right| 
\hspace*{1cm} 
\begin{minipage}{4cm}
{\small 
$(1\leq i_1 <i_2 <i_3\leq 4)$\\
$(1\leq j_1 <j_2 <j_3\leq 4)$}
\end{minipage}, 
\]
show that all coordinates $x_{ijk}$ vanish:   
\[
x_{123}=0,\ x_{124}=0,\ x_{134}=0,\ x_{234}=0, 
\] 
For example, using (F2), the relation (GH) for $x_{123}$ can be transformed into $0$ as follows:    
\begin{eqnarray*}
x_{123}^2 &=& \frac{1}{2}\left|
\begin{array}{ccc}
2 & x_{12} & x_{13}\\
x_{21} & 2 & x_{23}\\
x_{31} & x_{32} & 2
\end{array}
\right|=x_{12}x_{13}x_{23}-x_{12}^2-x_{13}^2-x_{23}^2+4\\
&=&(x_{13}^2-2)x_{13}^2-(x_{13}^2-2)^2-x_{13}^2-x_{13}^2+4=0. 
\end{eqnarray*}
The other $x_{ijk}$ can be shown to be $0$, similarly. 
This indicates that every point $(x_{13})$ in $F_2(4_1)$ lifts to the point $(x_{13};0)$ in $S_0(4_1)$, 
and thus $F_2(4_1)$ is isomorphic to $S_0(4_1)$. 
In this process, $S_0(4_1)$ is obtained by first computing $F_2(4_1)$, 
and then verifying the liftability condition given by the relations (GH).
\[
\begin{minipage}{6cm}
\begin{overpic}[width=\hsize]{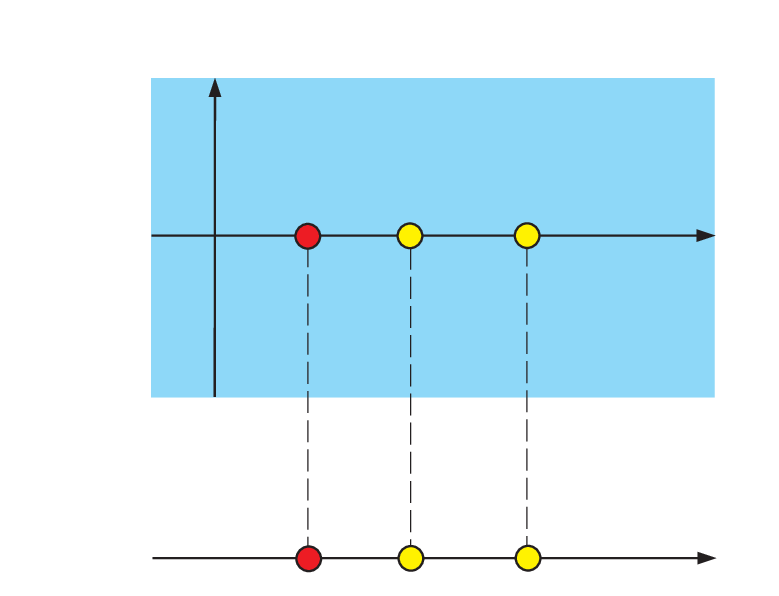}
\put(95,2){\small $\C^{4 \choose 2}$}
\put(95,45){\small $\C^{4 \choose 2}$}
\put(25,70){\small $\C^{4 \choose 3}$}
\put(22,40){\small $0$}
\put(0,3){\hspace*{-0.5cm} \small $F_2(4_1)=$}
\put(0,45){\hspace*{-0.5cm} \small $S_0(4_1)=$}
\end{overpic}
\end{minipage} 
\]

We next consider the case where $K$ is the $5_2$ knot. 
From the diagram below, it follows by a computer calculation 
that $F_2(5_2)$ is parametrized by $x_{14}$, which satisfies the equation 
\[
(x_{14}-2)\left(x_{14}^3+x_{14}^2-2x_{14}-1\right)=0.
\]
By a similar argument, all $x_{ijk}$ are shown to vanish. 
Consequently, every point $(x_{14})$ in $F_2(5_2)$ lifts to the point 
$(x_{14};0)$ in $S_0(5_2)$. Hence we obtain 
\[
S_0(5_2) \cong F_2(5_2)=\left\{x_{14}\in\C \mid 
(x_{14}-2)\left(x_{14}^3+x_{14}^2-2x_{14}-1\right)=0 \right\}. 
\]
\[
\begin{minipage}{3.5cm}\includegraphics[width=\hsize]{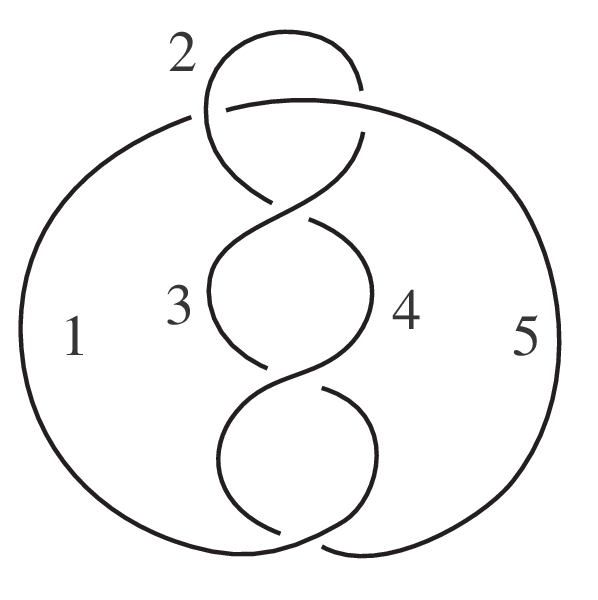}\end{minipage}
\begin{minipage}{1.5cm}\includegraphics[width=\hsize]{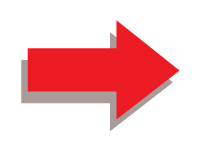}\end{minipage}
\hspace*{1cm}
\begin{minipage}{6cm}
\begin{overpic}[width=\hsize]{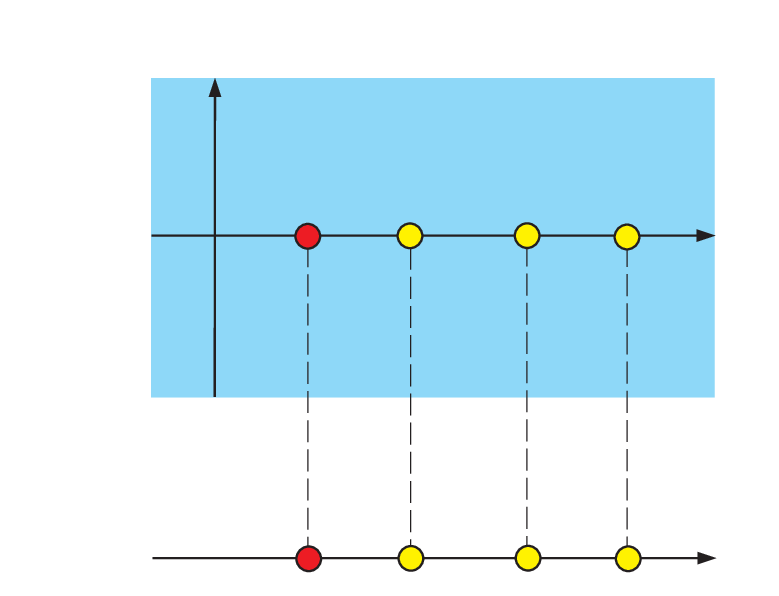}
\put(95,2){\small $\C^{5 \choose 2}$}
\put(95,45){\small $\C^{5 \choose 2}$}
\put(25,70){\small $\C^{5 \choose 3}$}
\put(22,40){\small $0$}
\put(0,3){\hspace*{-0.5cm} \small $F_2(5_2)=$}
\put(0,45){\hspace*{-0.5cm} \small $S_0(5_2)=$}
\end{overpic}
\end{minipage}
\]
As in the previous case, $S_0(5_2)$ is obtained by first determining $F_2(5_2)$
and then verifying the liftability condition given by the relations (GH).

We also analyze the case of the $8_5$ knot by a computer calculation. 
In this case, $F_2(8_5)$ consists of 12 points, whereas $S_0(8_5)$ consists of 13 points.
\[
\begin{minipage}{3.5cm}\includegraphics[width=\hsize]{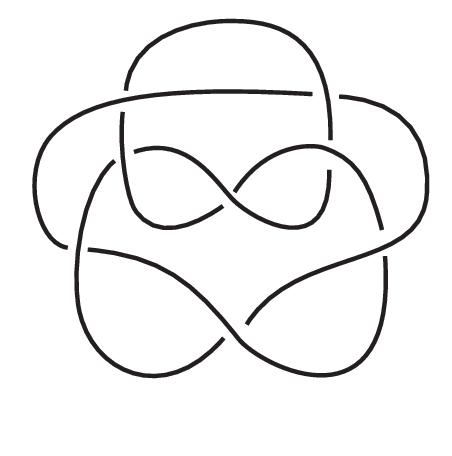}\end{minipage}
\begin{minipage}{1.5cm}\includegraphics[width=\hsize]{arrow.eps}\end{minipage}
\hspace*{1cm}
\begin{minipage}{6cm}
\begin{overpic}[width=\hsize]{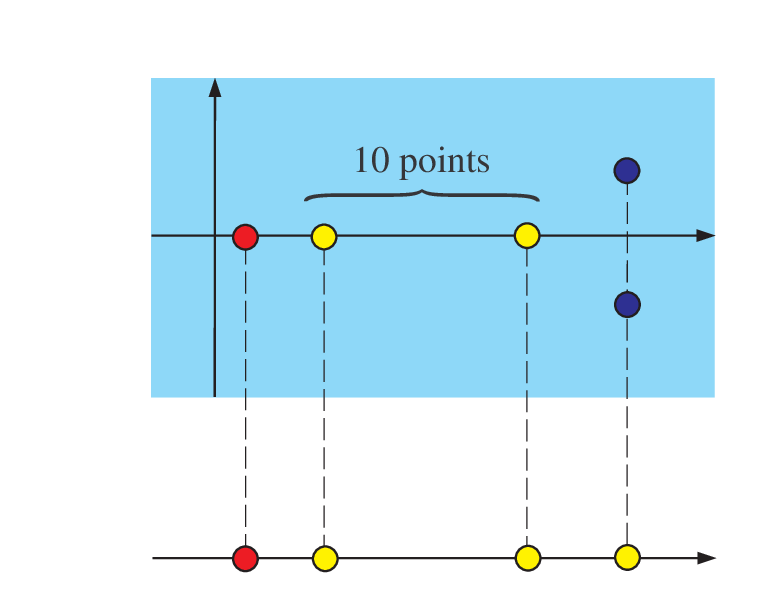}
\put(95,2){\small $\C^{8 \choose 2}$}
\put(95,45){\small $\C^{8 \choose 2}$}
\put(25,70){\small $\C^{8 \choose 3}$}
\put(22,40){\small $0$}
\put(0,3){\hspace*{-0.5cm} \small $F_2(8_5)=$}
\put(0,45){\hspace*{-0.5cm} \small $S_0(8_5)=$}
\end{overpic}
\end{minipage}
\]
As seen in this case, a point in $F_2(K)$ lifts to at most two points
in $S_0(K)$ under the relations (GH).
This endows the trace-free slice $S_0(K)$ with the structure of a 2-fold branched cover 
of the fundamental variety $F_2(K)$. 
The branch set consists of the characters of metabelian representations,
which are equivalent to binary dihedral representations
(see \cite[Theorem 1, Proposition 2]{Nagasato-Yamaguchi}).
Accordingly, the above computation shows that the $8_5$ knot admits
irreducible non-metabelian representations.
The $8_{20}$ knot is also known to admit such representations (see \cite{Nagasato2}),
and Zentner \cite{z} has discovered non-binary dihedral representations
for certain alternating pretzel knots.
Therefore, $F_2(K)$ and $S_0(K)$ are not isomorphic as algebraic sets in general.
The structure of $S_0(K)$ as a 2-fold branched cover of $F_2(K)$ 
plays a crucial role in the study of Ng's conjecture on degree $0$ abelian knot contact homology, 
which will be investigated in Subsection \ref{subsec_ghost}. 


\section{Application of the trace-free slice to abelian knot contact homology}
\label{sec_hc0}
\subsection{Degree 0 abelian knot contact homology}
In \cite{Ng1}, degree $0$ abelian knot contact homology was defined 
via representations of braid groups. 
This definition was later extended to arbitrary knot diagrams, 
not necessarily given as braid closures, in \cite{Ng2}. 
We briefly review this definition for arbitrary knot diagrams, 
following \cite[Section 4.3]{Ng2}. 

Let $K \subset \R^3$ be a knot with an $n$-crossing diagram $D_K$, and let 
\[
G(K)=\langle m_1,\cdots,m_n \mid r_1, \cdots, r_{n-1} \rangle
\] 
be the associated Wirtinger presentation. 
Let $\A_n^{ab}$ denote the polynomial ring over $\Z$ 
generated by indeterminates $a_{ij}$ $(1 \le i < j \le n)$, 
with the convention that $a_{ii}=-2$ for all $i$, where the indices correspond to the meridians 
$m_1,\cdots,m_n$. 
We denote by $\I_{D_K}$ the ideal\footnote{Although the original definition of the ideal 
$\I_{D_K}$ is given in a different form, it is equivalent to the ideal defined in the present paper. 
For details, see \cite{Ng2}; see also \cite[Section 6]{Nagasato3}.} of $\A_n^{ab}$ 
generated by the elements 
$a_{lj}+a_{lk}+a_{li}a_{ij}$, where $1 \leq l \leq n$ and $(i,j,k)$ ranges over all Wirtinger triples in $D_K$: 
\[
\I_{D_K}:=\left\langle
a_{lj}+a_{lk}+a_{li}a_{ij}\ \left|\ 
\mbox{\small $(i,j,k)$: any Wirtinger triple, }  \mbox{$1 \leq l \leq n$}
\right.\right\rangle.
\]
Each generator of $\I_{D_K}$ is symmetric in the indices $j$ and $k$,
for the same reason as the fundamental relations {\rm (F2)}:
\begin{eqnarray*}
&&a_{ij}+a_{ik}-2a_{ij}=a_{ik}-a_{ij},\\
&&a_{lk}+a_{lj}+a_{li}a_{ik}=a_{lj}+a_{ik}+a_{li}a_{ij}-a_{li}(a_{ij}-a_{ik}).
\end{eqnarray*}
We are now ready to define degree $0$ abelian knot contact homology.
\begin{definition}[degree 0 abelian knot contact homology]
Under the above setup, the degree $0$ abelian knot contact homology $HC_0^{ab}(K)$ of a knot $K$ 
is defined by 
\[
\A_n^{ab}/\I_{D_K}=\frac{\Z[a_{12}, \cdots, a_{n n-1}]}{\left\langle
a_{lj}+a_{lk}+a_{li}a_{ij}\ (\mbox{\small $(i,j,k)$: any Wirtinger triple, }  
\mbox{$1 \leq l \leq n$})\right\rangle}.
\] 
\end{definition}
It follows from \cite[Proposition 4.7]{Ng2} that $HC_0^{ab}(K)$ is a knot invariant, 
that is, the ring $\A_n^{ab}/\I_{D_K}$ is independent of the choice of a diagram $D_K$  
up to an isomorphism fixing $\Z$ pointwise. 


\subsection{Fundamental variety and degree 0 abelian knot contact homology}
As seen in Subsection \ref{sec_obs}, for a Wirtinger presentation 
$G(K)=\langle m_1,\cdots,m_n \mid r_1, \cdots, r_{n-1} \rangle$, 
the trace-free slice $S_0(K)$ admits the structure of 
a 2-fold branched cover over the fundamental variety $F_2(K)$:
\begin{eqnarray*}
&&F_2(K)=\left\{(x_{12},\cdots,x_{nn-1}) \in \C^{n \choose 2} 
\left|\ 
\begin{array}{l}
x_{ak}-x_{ij}x_{ai}+x_{aj}=0\\
\mbox{\small $(i,j,k)$: any Wirtinger triple, $1 \leq a \leq n\}$}
\end{array}
\right.\right\}
\end{eqnarray*}
One immediately observes that the defining equations of $F_2(K)$
closely resemble the generators of the ideal $\I_{D_K}$.
This relationship can be explained, using Hilbert's Nullstellensatz, as follows
(cf. \cite{Nagasato3}). 
The coordinate ring $\mathbf{C}[F_2(K)]$ of the fundamental variety $F_2(K)$ 
admits the presentation: 
\[
\mathbf{C}[F_2(K)] \cong \frac{\C[x_{12},\cdots,x_{n n-1}]}
{\sqrt{\langle x_{aj}+x_{ak}-x_{ai}x_{ij},
\mbox{\small $(i,j,k)$: any Wirtinger triple, } \mbox{$1 \leq a \leq n\}$}
\rangle}}. 
\]
It is then straightforward to verify that the map 
$f: HC_0^{ab}(K) \otimes \C \to \mathbf{C}[F_2(K)]$, defined by $f(a_{ij})=-x_{ij}$ and $f(1)=1$,
induces a ring homomorphism. The kernel of $f$ is obviousely the nilradical $\sqrt{0}$.
Consequently, we obtain the following result.
\begin{proposition}[cf. Theorem 7.5 in \cite{Nagasato3}]\label{contact-F2K}
For any knot $K$, there is an isomorphism 
\[
(HC_0^{ab}(K) \otimes \C)/ \sqrt{0} \cong \mathbf{C}[F_2(K)].
\]
\end{proposition}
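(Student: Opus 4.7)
The plan is to build an explicit surjective ring homomorphism from $HC_0^{ab}(K)\otimes\C$ onto $\mathbf{C}[F_2(K)]$ whose kernel is exactly the nilradical, and then conclude by the first isomorphism theorem. This is essentially the route suggested by the preceding paragraph via $f(a_{ij})=-x_{ij}$; what remains is to verify each step carefully and to match the relevant ideals.

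First I would base-change to $\C$. Because $HC_0^{ab}(K)=\Z[a_{12},\ldots,a_{n\,n-1}]/\I_{D_K}$ by definition and the generators of $\I_{D_K}$ have integer coefficients, we obtain
\[
HC_0^{ab}(K)\otimes\C \;\cong\; \C[a_{12},\ldots,a_{n\,n-1}]\big/\I_{D_K}^{\C},
\]
where $\I_{D_K}^{\C}$ denotes the ideal of $\C[a_{ij}]$ generated by the same Wirtinger expressions $a_{lj}+a_{lk}+a_{li}a_{ij}$.

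Next I would define a $\C$-algebra homomorphism $\widetilde{f}\colon \C[a_{12},\ldots,a_{n\,n-1}]\to\C[x_{12},\ldots,x_{n\,n-1}]$ by $\widetilde{f}(a_{ij})=-x_{ij}$. As a map of polynomial rings this is visibly an isomorphism, so the substantive check is that it carries the generators of $\I_{D_K}^{\C}$ to (unit multiples of) the (F2)-relators defining $F_2(K)$ in (\ref{F2k}):
\[
\widetilde{f}\bigl(a_{lj}+a_{lk}+a_{li}a_{ij}\bigr)
=-x_{lj}-x_{lk}+x_{li}x_{ij}
=-\bigl(x_{lk}-x_{ij}x_{li}+x_{lj}\bigr).
\]
Hence $\widetilde{f}$ descends to a ring isomorphism $HC_0^{ab}(K)\otimes\C \;\cong\; \C[x_{ij}]/J$, where $J$ is the ideal generated by all (F2) polynomials for the chosen diagram of $K$.

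Finally, by Hilbert's Nullstellensatz the coordinate ring is $\mathbf{C}[F_2(K)]=\C[x_{ij}]/\sqrt{J}$, and under the isomorphism above the image of $\sqrt{J}/J$ is precisely the nilradical $\sqrt{0}$ of $HC_0^{ab}(K)\otimes\C$. Taking this quotient on both sides gives
\[
\bigl(HC_0^{ab}(K)\otimes\C\bigr)\big/\sqrt{0} \;\cong\; \C[x_{ij}]\big/\sqrt{J} \;=\; \mathbf{C}[F_2(K)],
\]
which is the claimed isomorphism. The only genuinely nontrivial point in the whole argument is the bookkeeping of the sign twist $a_{ij}\leftrightarrow -x_{ij}$ (consistent with the convention $x_{ij}=-t_{m_im_j}$ adopted throughout the paper); once that dictionary is in place, the remainder is formal commutative algebra and no serious obstacle arises.
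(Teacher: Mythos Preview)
Your proof is correct and follows essentially the same approach as the paper: the paper simply declares that the map $f(a_{ij})=-x_{ij}$ is a ring homomorphism with kernel equal to the nilradical, while you carefully verify the sign computation matching $\I_{D_K}^{\C}$ with the (F2)-ideal and invoke the Nullstellensatz explicitly. The content is identical; you have just supplied the bookkeeping the paper leaves to the reader.
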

Since $HC_0^{ab}(K)$ is a knot invariant up to isomorphism fixing $\Z$ pointwise,
$(HC_0^{ab}(K)\otimes\C)/\sqrt{0}$ is also a knot invariant, 
up to isomorphism fixing $\C$ pointwise. 
By Hilbert's Nullstellensatz, this implies that the fundamental variety $F_2(K)$ 
is a knot invariant\footnote{This was also verified directly in \cite{Nagasato3}.}  
up to biregular equivalence. 


\subsection{Ng's conjecture and ghost characters of a knot}
\label{subsec_ghost}
Proposition \ref{contact-F2K} provides insight into the conjecture proposed 
by Ng in \cite{Ng2} concerning the relationship between degree 0 
abelian knot contact homology and the character variety of the 2-fold branched cover 
of $\S^3$ branched along a knot. We briefly recall the relevant constructions below. 

For a knot $K$, let 
\[ 
G(K)=\langle m_1,\cdots,m_n \mid r_1,\cdots,r_{n-1} \rangle
\]
be the knot group generated by meridians\footnote{This is not necessarily 
a Wirtinger presentation of $G(K)$.}. 
Let $p: C_2K \to E_K$ denote the 2-fold cyclic cover 
of the knot exterior $E_K$, chosen so that the image $p(\mu_2)$ of a meridian $\mu_2$ 
of $C_2K$ is homotopic to the square $m_1^2$ of a meridian $m_1$ of $K$. 
The 2-fold branched cover $\Sigma_2K$ of $\S^3$ branched along $K$ is obtained 
from $C_2K$ by trivially filling a solid torus, that is, by attaching the standard meridian 
of the solid torus to $\mu_2$). 
By this construction, we have 
\[
\pi_1(\Sigma_2K) \cong \pi_1(C_2K)/ \l \mu_2 \r,
\] 
where $\l \mu_2 \r$ denotes the normal closure of the group $\langle \mu_2 \rangle$. 
Since the covering map $p$ induces an injection $p_* : \pi_1(C_2K) \to G(K)$, 
it follows that  
\[
\pi_1(\Sigma_2K) \cong \pi_1(C_2K)/ \l \mu_2 \r \cong \mathrm{Im}(p_*)/\l m_1^2 \r. 
\] 
For a group $G$, we denote by $\mathfrak{R}(G)$ the set of $\SL_2(\C)$-representations 
of $G$. 
Accordingly, the set $\mathfrak{R}(\Sigma_2K)=\mathfrak{R}(\pi_1(\Sigma_2K))$ 
can be identified with 
\[
\mathfrak{R}(\Sigma_2K)=\left\{\rho_* \in \mathfrak{R}(\mathrm{Im}(p_*)) 
\mid \rho_*(m_1^2)=E \right\},  
\]
where $E$ denotes the identity matrix. 
Consequently, the set $\mathfrak{X}(\Sigma_2K)=\mathfrak{X}(\pi_1(\Sigma_2K))$ is given by 
\[
\mathfrak{X}(\Sigma_2K)=\{ \chi_{\rho_*} \mid \rho_* \in \mathfrak{R}(\mathrm{Im}(p_*)),\ 
\rho_*(m_1^2)=E \}, 
\]
and the character variety $X(\Sigma_2K)$ is realized as the image 
$t(\mathfrak{X}(\Sigma_2K))$ under the trace map $t$. 

As mentioned in Section \ref{sec_review}, the character variety $X(\Sigma_2K)$ 
is a closed algebraic set, and hence admits the coordinate ring $\mathbf{C}[X(\Sigma_2K)]$. 
Ng's conjecture asserts that the degree $0$ abelian knot contact homology of $K$, after
complexification, recovers this coordinate ring. 

\begin{conjecture}[Conjecture 5.7 in \cite{Ng2}]\label{conj_ng}
Let $G(K)=\langle m_1,\cdots, m_n \mid r_1,\cdots, r_{n-1} \rangle$ 
be a Wirtinger presentation. Then the ring homomorphism
\[
g: HC_0^{ab}(K) \otimes \C \to \mathbf{C}[X(\Sigma_2K)]
\]
given by $g(a_{ij}) = -t_{m_i m_j}$ $(1 \leq i < j \leq n)$, $g(1)=1$, 
is an isomorphism. 
\end{conjecture}
Since the coordinate ring of any closed algebraic set is reduced, 
it is natural to consider the nilradical quotient of the left-hand side. 
In this form, Proposition \ref{contact-F2K} shows that Conjecture \ref{conj_ng} 
is equivalent to the following statement: the ring homomorphism 
\[
h: \mathbf{C}[F_2(K)] \cong (HC_0(K) \otimes \C)/\sqrt{0} \to \mathbf{C}[X(\Sigma_2K)]
\]
defined by $h(x_{ij})=t_{m_im_j}$ $(1 \leq i < j \leq n)$, $h(1)=1$, is an isomorphism. 
By Hilbert's Nullstellensatz, this holds if and only if the following  
formulation is satisfied. 

\begin{conjecture}\label{conj_nag}
Let $G(K)=\langle m_1,\cdots, m_n \mid r_1,\cdots, r_{n-1} \rangle$ 
be a Wirtinger presentation.  Then the pull-back of $h$ 
\[
h^*:X(\Sigma_2K) \to F_2(K) 
\]
given by $v(h^*(z))=h(v)(z)$ for $v \in \mathbf{C}[F_2(K)]$ 
and $z \in X(\Sigma_2K)$, is an isomorphism of algebraic sets (a biregular map).
\end{conjecture}

From this point on, we concentrate on Conjecture \ref{conj_nag} 
and the map $h^*: X(\Sigma_2K) \to F_2(K)$, rather than on Conjecture \ref{conj_ng}. 
A key observation in analyzing the map $h^*$ is that the elements $m_im_j$ 
$(1 \leq i<j \leq n)$ in $G(K)$, which appear in Conjecture \ref{conj_ng}, 
lie in the image $\mathrm{Im}(p_*)$. 
Topologically, this can be seen by constructing a loop $\gamma$ in $C_2K$ 
whose homotopy class $[\gamma]$ satisfies $p_*([\gamma])=m_im_j$. 
For instance, let $S$ be a regular Seifert surface for $K$, which is isotopic to a disk with  
braided bands as shown in \cite[Lemma 1.8]{Lin2}, and fix a base point $b$ on $S$. 
Then, up to homotopy, the loop representing $m_im_j$ may be chosen so as to intersect 
the surface $S \cap E_K$ in exactly two points (see Figure \ref{fig_mimj}). 

\begin{figure}[htbp]
\[
\begin{minipage}{12cm}
\begin{overpic}[width=\hsize]{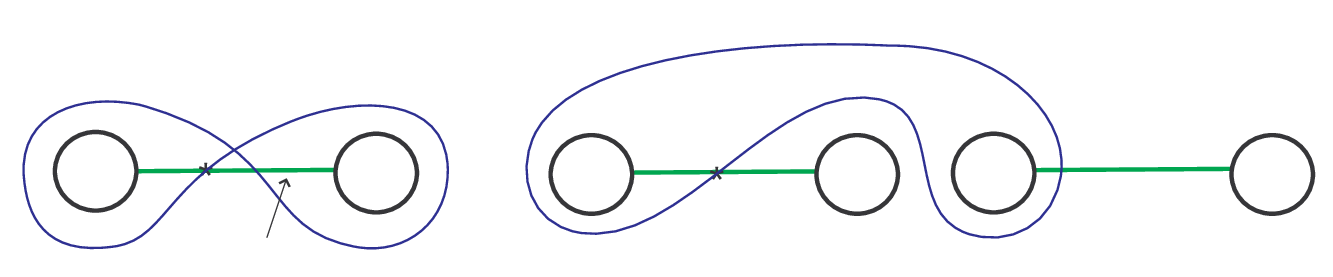}
\put(15,4){\small $b$}
\put(53,9){\small $b$}
\put(5,7){\small $N_K$}
\put(26,7){\small $N_K$}
\put(42,7){\small $N_K$}
\put(62,7){\small $N_K$}
\put(72,7){\small $N_K$}
\put(93,7){\small $N_K$}
\put(13,0){\small $S \cap E_K$}
\put(51,3){\small $S \cap E_K$}
\put(81,3){\small $S \cap E_K$}
\put(22,16){$m_im_j$}
\put(54,19){$m_im_j$}
\end{overpic}
\end{minipage}
\]
\caption{Intersetion patterns of the loop representing $m_im_j$ 
with the surface $S \cap E_K$. $N_K$ denotes a tubular neighborhood of $K$.}
\label{fig_mimj}
\end{figure}

In this setting, by construction, the element $m_im_j$ admits a unique lift to $C_2K$ 
up to homotopy. We denote this lift by $\gamma$. 
This construction\footnote{A similar construction was introduced earlier in \cite{Ng2} 
in the definition of the map $g$.} satisfies $p_*([\gamma])=m_im_j$. 
In particular, this shows that $m_im_j$ $(1 \leq i < j \leq n)$ lies in $\mathrm{Im}(p_*)$. 
The above topological observation is supported by the following algebraic description 
due to Fox. 

\begin{theorem}[\cite{Fox}, cf.\cite{Kinoshita}]\label{thm_Fox}
For a knot $K$, let $G(K)=\langle m_1,\cdots,m_n \mid r_1,\cdots,r_{n-1} \rangle$ 
be the knot group generated by $n$ meridians $m_1,\dots,m_n$. 
Then we have 
\[
\pi_1(\Sigma_2K) \cong 
\langle m_1m_i\ (2 \leq i \leq n) 
\mid w(r_j),w(m_1r_jm_1^{-1})\ (1 \leq j \leq n-1), 
m_i^2\ (1 \leq i \leq n)\rangle,
\] 
where $w(r_j)$ (resp. $w(m_1r_jm_1^{-1})$) denotes the word obtained by expressing $r_j$ 
(resp. $m_1r_jm_1^{-1}$) in terms of the generators $m_1m_2,\cdots,m_1m_n$. 
\end{theorem}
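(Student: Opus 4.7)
The plan is to apply the Reidemeister--Schreier method to the index-$2$ subgroup $\pi_1(C_2K) \leq G(K)$, then pass to the quotient $\pi_1(\Sigma_2K) = \pi_1(C_2K)/\l m_1^2 \r$, and finally reduce the presentation by Tietze transformations.

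First I would identify $\pi_1(C_2K)$ as the kernel of the composition $G(K) \to H_1(E_K; \Z) \cong \Z \to \Z/2$ sending every meridian to $1 \bmod 2$; this subgroup has index $2$, and one may take $\{1, m_1\}$ as a Schreier transversal. The Reidemeister--Schreier algorithm then yields a presentation of $\pi_1(C_2K)$ with Schreier generators
\[
\alpha_i = m_1 m_i \quad (1 \leq i \leq n), \qquad \beta_i = m_i m_1^{-1} \quad (2 \leq i \leq n),
\]
(the generator associated with $(t,x)=(1,m_1)$ being trivial, and $\alpha_1 = m_1^2$ arising from $(m_1, m_1)$) together with relators $w(r_j)$ and $w(m_1 r_j m_1^{-1})$ for $j = 1,\dots, n-1$, where $w$ denotes the Schreier rewrite of a word lying in $\pi_1(C_2K)$.

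Next, I would add the relation $\alpha_1 = m_1^2 = 1$ to obtain $\pi_1(\Sigma_2K)$. The key algebraic identity is
\[
\beta_i \alpha_i = (m_i m_1^{-1})(m_1 m_i) = m_i^2 \quad \text{in } \pi_1(C_2K).
\]
Since every meridian $m_i$ is conjugate to $m_1$ in $G(K)$, say $m_i = g_i m_1 g_i^{-1}$, one has $m_i^2 = g_i m_1^2 g_i^{-1}$. If $g_i \in \pi_1(C_2K)$ this is visibly in the normal closure $\l \alpha_1 \r_{\pi_1(C_2K)}$; if $g_i \notin \pi_1(C_2K)$, write $g_i = h_i m_1$ with $h_i \in \pi_1(C_2K)$, so that $g_i m_1^2 g_i^{-1} = h_i m_1^2 h_i^{-1}$, again in the normal closure. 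Hence $m_i^2 = 1$ in $\pi_1(\Sigma_2K)$, and so $\beta_i = \alpha_i^{-1}$ for every $i \geq 2$.

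Using the Tietze relations $\alpha_1 = 1$ and $\beta_i = \alpha_i^{-1}$, I would eliminate $\alpha_1, \beta_2, \dots, \beta_n$ from the generating set, leaving the $n-1$ generators $\alpha_i = m_1 m_i$ for $2 \leq i \leq n$. Each relator $w(r_j)$ and $w(m_1 r_j m_1^{-1})$, after substituting $\beta_i \mapsto \alpha_i^{-1}$ and deleting $\alpha_1$, becomes exactly the word obtained by interpreting $r_j$ or $m_1 r_j m_1^{-1}$ directly in the generators $m_1 m_i$, matching Fox's formulation.

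The main obstacle is to verify that the substitution $\beta_i = \alpha_i^{-1}$ is itself a consequence of the listed Schreier relators together with $\alpha_1 = 1$, so that the Tietze elimination does not silently discard any relations. This amounts to producing, for each $i$, an element $h_i \in \pi_1(C_2K)$ with $h_i \alpha_1 h_i^{-1} = \beta_i \alpha_i$, expressed as a word in the Schreier generators modulo the Schreier relators. Such an $h_i$ comes from tracing a sequence of Wirtinger relators that realises $g_i m_1 g_i^{-1} = m_i$ and then applying the Schreier rewrite; carrying out this bookkeeping is routine but technical.
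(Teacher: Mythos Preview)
Your approach is essentially the same as the paper's: the paper sketches the argument by writing down the short exact sequence $1 \to \pi_1(C_2K) \xrightarrow{p_*} G(K) \to \Z/2\Z \to 1$, using the coset decomposition $G(K)=\mathrm{Im}(p_*) \cup \mathrm{Im}(p_*)m_1$ (i.e.\ Reidemeister--Schreier with transversal $\{1,m_1\}$) to obtain $\mathrm{Im}(p_*) \cong \langle m_1m_i,\, m_im_1^{-1}\ (1\le i\le n) \mid w(r_j),\, w(m_1 r_j m_1^{-1})\rangle$, and then passing to the quotient by $\l m_1^2 \r$, deferring the details to Fox's original paper. Your write-up is in fact more explicit than the paper's on the one nontrivial point, namely why the generators $\beta_i=m_im_1^{-1}$ may be eliminated after setting $m_1^2=1$; your argument via conjugacy of meridians (so that $m_i^2 \in \l m_1^2 \r$ and hence $\beta_i=\alpha_i^{-1}$) is exactly what is needed.
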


Theorem \ref{thm_Fox} can be obtained from the following presentation 
of $\mathrm{Im}(p_*)$:  
\[
\mathrm{Im}(p_*) 
\cong 
\langle m_1m_i, m_im_1^{-1} (1 \leq i \leq n) 
\mid w(r_j),w(m_1r_jm_1^{-1})\ (1 \leq j \leq n-1)\rangle. 
\]
This arises from the coset decomposition 
$G(K)=\mathrm{Im}(p_*) \cup \mathrm{Im}(p_*)m_1$ 
via the Schreier system $\{1,m_1\}$, together with the injection $p_*: \pi_1(C_2K) \to G(K)$. 
Taking the quotient\footnote{Any meridian $m_i$ may be used in place of $m_1$, 
since all meridians are conjugate.} by the normal closure $\l m_1^2 \r$ and 
applying Tietze transformations then yield the presentation in Theorem \ref{thm_Fox}. 
For further details, see \cite{Fox,Kinoshita}. 

Using the presentation in Theorem \ref{thm_Fox} and the trace map $t$, 
the character variety $X(\Sigma_2K)$ is parametrized as follows. 
For a representation $\rho_*:\pi_1(\Sigma_2K)\to \SL_2(\C)$, set
\begin{eqnarray*}
y_{a}(\chi_{\rho_*}) &:=&t_{m_1m_a}(\chi_{\rho_*}),\\
y_{ab}(\chi_{\rho_*}) &:=&t_{(m_1m_a)(m_1m_b)}(\chi_{\rho_*}),\\
y_{abc}(\chi_{\rho_*}) &:=&t_{(m_1m_a)(m_1m_b)(m_1m_c)}(\chi_{\rho_*}). 
\end{eqnarray*}
Then, as shown in Section \ref{sec_review}, the character variety $X(\Sigma_2K)$ 
can be realized via the trace map $t$ as 
\[
X(\Sigma_2K)=\left\{\left(y_{a}(\chi_{\rho_*}); y_{bc}(\chi_{\rho_*}); y_{def}(\chi_{\rho_*})\right) 
\in \C^{n-1+{n-1 \choose 2}+{n-1 \choose 3}}\ 
\left|\ 
\begin{array}{l}
\chi_{\rho_*} \in \mathfrak{X}(\Sigma_2K)\\
2 \leq a \leq n\\
2 \leq b < c \leq n \\
2 \leq d < e < f \leq n
\end{array}
\right.\right\}. 
\]
For $1 \leq a < b \leq n$, we define the function $z_{ab}$ on $\mathfrak{X}(\Sigma_K)$ by 
\[
z_{ab}(\chi_{\rho_*}):=t_{m_am_b}(\chi_{\rho_*})=t_{(m_1m_a)^{-1}(m_1m_b)}(\chi_{\rho_*})
=y_a(\chi_{\rho_*})y_b(\chi_{\rho_*})-y_{ab}(\chi_{\rho_*}). 
\] 
This yields an equivalent parametrization of $X(\Sigma_2K)$: 
\[
X(\Sigma_2K) \cong 
\left\{\left(z_{ab}(\chi_{\rho_*}); y_{def}(\chi_{\rho_*}) \right) 
\in \C^{{n \choose 2}+{n-1 \choose 3}}\ 
\left|\ 
\begin{array}{ll}
\chi_{\rho_*} \in \mathfrak{X}(\Sigma_2K), & 1 \leq a < b \leq n\\
& 2 \leq d < e < f \leq n
\end{array}
\right.\right\}. 
\]

For the remainder of this section, we assume that
\[
G(K)=\langle m_1,\cdots,m_n \mid r_1,\cdots,r_{n-1} \rangle
\]
is a Wirtinger presentation. Then, with respect to the coordinates $(z_{ab};y_{def})$, 
the map $h^*:X(\Sigma_2K) \to F_2(K)$ is explicitly given by 
\[
h^*((z_{ab};y_{def}))=(z_{ab}). 
\] 
Indeed, for any $1 \leq i < j \leq n$ 
and $\chi_{\rho_*}=(z_{ab};y_{def}) \in X(\Sigma_2K)$, we have
\[
x_{ij}(h^*((z_{ab};y_{def})))=h(x_{ij})(\chi_{\rho_*})=t_{m_im_j}(\chi_{\rho_*})=z_{ij}.
\]
We note that the polynomial map $h^*$ is well-defined. 
To see this, for any Wirtinger triple $(i,j,k)$ and any $1 \leq a \leq n$, 
the relations 
\[
m_am_k=(m_a m_i)(m_j m_i),\ m_l^2=1\ (1 \leq l \leq n)
\] 
in $\pi_1(\Sigma_2K) \cong \mathrm{Im}(p_*)/\l m_1^2 \r$ imply that, 
for any character $\chi_{\rho_*} \in X(\Sigma_2K)$, 
\[
t_{m_am_k}(\chi_{\rho_*})
=t_{m_am_i}(\chi_{\rho_*})t_{m_im_j}(\chi_{\rho_*})-t_{m_am_j}(\chi_{\rho_*}). 
\]
Consequently, the point $h^*(\chi_{\rho_*})=(t_{m_im_j}(\chi_{\rho_*})) \in \C^{n \choose 2}$ 
satisfies the relations (F2) for $G(K)$. 
Hence the image of any point of $X(\Sigma_2K)$ under $h^*$ lies in $F_2(K)$. 

We now turn to Conjecture \ref{conj_nag}. We analyze the conjecture via the map 
\[
\widehat{\Phi}: \mathfrak{S}_0(K) \to \mathfrak{X}(\Sigma_2K),
\] 
constructed in \cite{Nagasato-Yamaguchi}. 
For $\chi_{\rho} \in \mathfrak{S}_0(K)$ and $g \in \pi_1(\Sigma_2K)$, 
the map $\widehat{\Phi}$ is defined by 
\begin{eqnarray*}
\widehat{\Phi}(\chi_{\rho})(g)=(\sqrt{-1})^{\alpha(p_*(g))}\chi_{\rho}(p_*(g)), 
\end{eqnarray*}
where $\alpha: G(K) \to H_1(E_K)=\langle m_1 \rangle \cong \Z$ denotes the abelianization. 
The map $\widehat{\Phi}$ is one-to-one for the characters of metabelian representations 
and two-to-one for the others (see \cite[Theorem 1]{Nagasato-Yamaguchi}). 
Moreover, the map $\widehat{\Phi}$ is surjective for 
any 2-bridge knots \cite[Proposition 11]{Nagasato-Yamaguchi} 
and for pretzel knots \cite[Proposition 14]{Nagasato-Yamaguchi}, which are 3-bridge knots. 

With respect to the above parametrization $(z_{ab};y_{def})$ of $X(\Sigma_2K)$, 
the map $\widehat{\Phi}$ can be described explicitly as a polynomial map. 
For a trace-free character $\chi_{\rho}=(x_{ij};x_{ijk})\in S_0(K)$, we have
\begin{eqnarray*}
\widehat{\Phi}((x_{ij};x_{ijk}))
&=&\left(t_{m_am_b}(\widehat{\Phi}(\chi_{\rho}));
t_{(m_1m_d)(m_1m_e)(m_1m_f)}(\widehat{\Phi}(\chi_{\rho}))
\right)
\label{poly_phi}\\
&=&\left(x_{ab}; x_{1d}x_{1e}x_{1f}-\frac{1}{2}(x_{1d}x_{ef}+x_{1e}x_{df}+x_{1f}x_{de})
\right),\nonumber
\end{eqnarray*}
where we have used the $\SL_2(\C)$-trace identity (equivalently, the skein relations)  
together with the trace-free condition. Indeed, for any trace-free character 
$\chi_{\rho} \in S_0(K)$, we obtain  
\begin{eqnarray*}
t_{m_a m_b}(\widehat{\Phi}(\chi_{\rho}))&=&t_{m_a m_b}(-\chi_{\rho})
=-t_{m_am_b}(\chi_{\rho})\\
t_{(m_1m_d)(m_1m_e)(m_1m_f)}(\widehat{\Phi}(\chi_{\rho}))
&=&t_{(m_1m_d)(m_1m_e)(m_1m_f)}(-\chi_{\rho})=-t_{(m_1m_d)(m_1m_e)(m_1m_f)}(\chi_{\rho})\\
&=&-t_{(m_1m_d)}(\chi_{\rho})t_{(m_1m_e)}(\chi_{\rho})t_{(m_1m_f)}(\chi_{\rho})\\
&&-\frac{1}{2}t_{(m_1m_d)}(\chi_{\rho})t_{(m_em_f)}(\chi_{\rho})\\
&&-\frac{1}{2}t_{(m_1m_e)}(\chi_{\rho})t_{(m_dm_f)}(\chi_{\rho})\\
&&-\frac{1}{2}t_{(m_1m_f)}(\chi_{\rho})t_{(m_dm_e)}(\chi_{\rho}). 
\end{eqnarray*}
Consequently, the image of a point $(x_{ij};x_{ijk}) \in S_0(K)$ 
under the map $\widehat{\Phi}$ is completely determined by the coordinates $(x_{ij})$. 

Let $q: S_0(K) \to F_2(K)$ be the projection that forgets the coordinates $(x_{ijk})$. 
This induces a polynomial map 
\[
r: \mathrm{Im}(q) \to X(\Sigma_2K), 
\]
defined by 
\[
r(x_{ij})=\widehat{\Phi}(q^{-1}((x_{ij})))
=\left(x_{ab}; x_{1d}x_{1e}x_{1f}-\frac{1}{2}(x_{1d}x_{ef}+x_{1e}x_{df}+x_{1f}x_{de})
\right). 
\]
This explicit form shows that $r$ is injective. 
Hence, $\mathrm{Im}(q)$ and $\mathrm{Im}(\widehat{\Phi})$ are in one-to-one 
correspondence\footnote{It is not difficult to show that these are closed algebraic sets 
and hence isomorphic as algebraic sets.} via the maps $r$ and $h^*$. 
This yields the following result. 

\begin{proposition}\label{prop_conj}
If both maps $q$ and $\widehat{\Phi}$ are surjective, then 
the compositions $h^* \circ r$ and $r \circ h^*$ are well-defined, and satisfy 
\[
h^* \circ r=id_{F_2(K)},\ r \circ h^*=id_{X(\Sigma_2K)}.
\] 
In particular, $F_2(K)$ and $X(\Sigma_2K)$ are isomorphic, 
and Conjecture $\ref{conj_nag}$ holds in this case. 
\end{proposition}

As mentioned above, it has been shown that $\widehat{\Phi}$ is surjective for 2-bridge knots 
and for pretzel knots, which is a family of 3-bridge knots. 
More generally,  in Theorem \ref{thm_Ng_conj} (1) we will show 
that the map $\widehat{\Phi}$ is surjective for all 3-bridge knots. 

On the other hand, as observed in Section \ref{sec_obs}, 
the projection $q: S_0(K) \to F_2(K)$ is surjective for several knots. 
It is natural to ask whether this property holds for all knots. 
If there exists a point in $F_2(K)$ whose preimage under $q$ is empty, 
namely, a point that does not lift to $S_0(K)$, 
then we call such a point a ghost character of the knot $K$.

\begin{definition}[Ghost characters of a knot]\label{def_ghost}
A point $(x_{ij})$ in $F_2(K)$ that does not satisfy one of the relations (GH), 
is called a ghost character of the knot $K$. 
\end{definition}

\begin{figure}[htbp]
\[
\begin{minipage}{11.5cm}\begin{overpic}[width=\hsize]{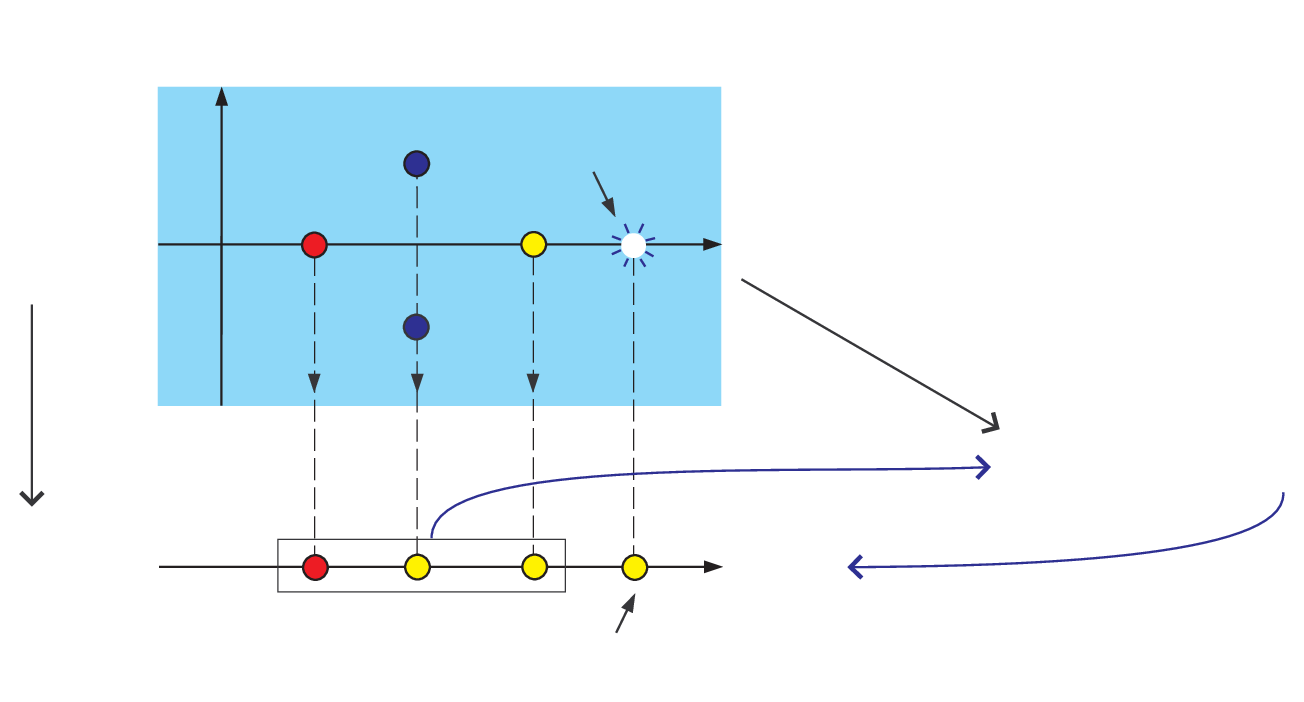}
\put(58,9){\small $\C^{n \choose 2}$}
\put(58,33){\small $\C^{n \choose 2}$}
\put(15,49){\small $\C^{n \choose 3}$}
\put(14,32){\small $0$}
\put(0,10){\hspace*{-0.5cm} \small $F_2(K)=$}
\put(0,34){\hspace*{-0.5cm} \small $S_0(K)=$}
\put(67,28){$\widehat{\Phi}$ {\scriptsize (2-fold branched)}}
\put(4,22){$q$}
\put(24,5){\small $\mathrm{Im}(q)$}
\put(35,2){\small a ghost character}
\put(78,18){\small $\fbox{$\mathrm{Im}(\widehat{\Phi})$}\subset X(\Sigma_2K)$}
\put(47,19){$r$ {\scriptsize (one-to-one)}}
\put(79,12){\small $h^*$}
\put(38,42){\small does not lift!}
\end{overpic}
\end{minipage}
\]
\caption{Schematic view of $X(\Sigma_2K)$, $S_0(K)$ and $F_2(K)$ with a ghost character.}
\label{fig_landscape_S0K}
\end{figure}

By definition, if a knot $K$ admits no ghost characters, then the map
$q : S_0(K) \to F_2(K)$ is surjective.
In fact, we prove the following. 

\begin{theorem}\label{thm_noghost}
Any knot $K$ with bridge index less than $4$ admits no ghost characters.
\end{theorem}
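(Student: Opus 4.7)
The plan is to exploit a bridge diagram of $K$ with $b\leq 3$ over-arcs and show, via the fundamental relations (F2), that every Wirtinger-level (H) and (R) reduces to an instance involving only the $b$ bridge meridians; for $b\leq 3$ these bridge-level instances are either vacuous or merely determine $x_{\alpha_1\alpha_2\alpha_3}$ rather than constrain the bridge coordinates $x_{\alpha_p\alpha_q}$.

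First I would fix a bridge diagram $D_K$ realizing the bridge index $b$ and label the Wirtinger generators so that $m_{\alpha_1},\ldots,m_{\alpha_b}$ are the meridians of the $b$ over-arcs. Walking along each under-strand starting from its initial over-arc and applying (F2) at each successive crossing, every $x_{ij}$ (and every $x_{ijk}$) can be written as a polynomial in the bridge coordinates $\{x_{\alpha_p\alpha_q}\}_{1\leq p<q\leq b}$ (together with $\{x_{\alpha_p\alpha_q\alpha_r}\}$). This is where the bridge-index assumption enters: for $b\leq 3$ all Wirtinger coordinates become polynomials in at most $\binom{3}{2}=3$ double-trace variables and at most one triple-trace variable.

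Next I would substitute these polynomial expressions into the Wirtinger (H) and (R). The key algebraic step is that a single substitution $x_{ak}=x_{ij}x_{ai}-x_{aj}$ enters a given row (or column) of a hexagon or rectangle determinant linearly, hence splits the determinant into two determinants each with one fewer non-bridge index. Iterating, every Wirtinger (H) and (R) reduces modulo (F2) to a bridge-level instance, i.e.\ one whose indices all lie in $\{\alpha_1,\ldots,\alpha_b\}$. The case analysis is then immediate: for $b\leq 2$ there are no bridge-level (H) (which needs three distinct indices) nor (R) (which needs four), so every Wirtinger (H) and (R) becomes a trivial identity; for $b=3$ there is still no (R), and the sole (H) is the diagonal $x_{\alpha_1\alpha_2\alpha_3}^2=\tfrac{1}{2}\det(M)$, which fixes $x_{\alpha_1\alpha_2\alpha_3}$ up to sign without constraining the bridge $x_{\alpha_p\alpha_q}$. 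Hence every $(x_{ij})\in F_2(K)$ lifts to $S_0(K)$ by choosing either square root and propagating the $x_{ijk}$'s through (F2), and $K$ has no ghost characters.

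The main obstacle will be the book-keeping in the reduction step: one must verify carefully that after the multilinear expansion of the hexagon and rectangle determinants the resulting relations really are exhausted by the bridge-level ones, without leaving residual polynomial conditions on the bridge $x_{\alpha_p\alpha_q}$. This is a purely combinatorial unraveling of Wirtinger triples along the diagram, not a deep structural argument, but tracking the simultaneous elimination of two or three non-bridge indices inside a single hexagon determinant deserves particular care; it is the one place where the specific structure of a \emph{bridge} diagram (rather than an arbitrary diagram) is used in full strength.
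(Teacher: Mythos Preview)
Your proposal is correct and follows essentially the same route as the paper: reduce all coordinates to the bridge variables via the linear relations $x_{ap}=\sum_{i}f_{pi}\,x_{ai}$ coming from (F2), use column-multilinearity to collapse every (R) to a $4\times4$ determinant with linearly dependent columns (hence zero) and every (H) to a scalar multiple of the single bridge hexagon $x_{123}^2=\tfrac12\det(x_{\alpha_p\alpha_q})$, then lift by choosing a sign for $x_{123}$. One small sharpening: the phrase ``propagating the $x_{ijk}$'s through (F2)'' is imprecise since (F2) only involves double indices; what actually fixes the remaining $x_{ijk}$ is the factorization $x_{pqr}=\det(f_{p\bullet},f_{q\bullet},f_{r\bullet})\,x_{123}$ that drops out of the hexagon reduction, exactly as the paper records.
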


\begin{proof}
We begin by outlining the strategy of the proof. 
Let $K$ be a $3$-bridge knot given by an $n$-crossing diagram $D_K$ in $3$-bridge position,
and let $G(K)=\langle m_1,\dots,m_n \mid r_1,\dots,r_{n-1}\rangle$ 
be again the associated Wirtinger presentation.
The trace-free slice $S_0(K)$ is parametrized via the map $\tilde{t}$ by
\[
\left(-t_{m_im_j}(\chi_\rho);\,-t_{m_im_jm_k}(\chi_\rho)\right)
\in \C^{{n \choose 2}+{n \choose 3}},
\]
where $\rho: G(K) \to \SL_2(\C)$ is a trace-free representation.
Since $K$ is a $3$-bridge knot, the presentation of $G(K)$ can be reduced,
via Tietze transformations, to
\[
G(K)=\langle m_1,m_2,m_3 \mid r_1,r_2\rangle,
\]
reflecting the $3$-bridge structure of $K$.
Accordingly, the above parametrization of $S_0(K)$ reduces to
\[
\bigl(
-t_{m_1m_2}(\chi_\rho),
-t_{m_1m_3}(\chi_\rho),
-t_{m_2m_3}(\chi_\rho);
-t_{m_1m_2m_3}(\chi_\rho)
\bigr)
\in \C^{4}. 
\]
Then, it follows that the fundamental variety $F_2(K)$ admits the description
\[
F_2(K) \cong
\{(x_{12},x_{13},x_{23})\in \C^3 \mid \text{\rm (A), (B), (C)}\},
\]
where (A), (B), and (C) are certain polynomial relations in $x_{12},x_{13},x_{23}$.
In this setting, the relations (GH) reduce to the single relation
\[
x_{123}^2
=\frac12
\begin{vmatrix}
x_{11} & x_{12} & x_{13}\\
x_{21} & x_{22} & x_{23}\\
x_{31} & x_{32} & x_{33}
\end{vmatrix}.
\]
Consequently, every point of $F_2(K) \subset \C^3$ admits a lift to $S_0(K) \subset \C^4$,
and hence the knot $K$ admits no ghost characters.

In the following, we explicitly demonstrate how $S_0(K)$, originally embedded 
in $\C^{{n \choose 2}+{n \choose 3}}$, can be reduced to the above description in $\C^4$. 
The proof proceeds in 3 steps. 
First, we explain how the Kauffman bracket skein theory is used to carry out 
elimination of the defining polynomials of $F_2(K)$ in a diagrammatic way.
Second, we present an explicit elimination process for the fundamental variety 
$F_2(K)$ of a knot $K$ in 3-bridge position, thereby recovering 
the defining equations (A), (B), and (C) introduced above.
Finally, using this reduced parametrization of $F_2(K)$, we show that every point in $F_2(K)$ 
lifts to $S_0(K)$, which completes the proof. 

\fbox{\bf Step 1}
The basic idea for reducing the parameters $x_{ij}$ of $F_2(K) \subset \C^{n \choose 2}$ 
to $x_{12}$, $x_{13}$, and $x_{23}$ is to repeatedly apply the relations (F2) as follows. 
We first eliminate $x_{an}$ $(1 \leq a \leq n-1)$ from (F2) using
\[
x_{an}=x_{p_1q_1}x_{ap_1}-x_{aq_1},
\]
where $(p_1,q_1,n)$ is a Wirtinger triple of $D_K$. 
Next, we eliminate $x_{an-1}$ $(1 \leq a \leq n-2)$ 
from (F2) and the equations obtained in the previous step, using
\[
x_{an-1}=x_{p_2q_2}x_{ap_2}-x_{aq_2},
\]
where $(p_2,q_2,n-1)$ is another Wirtinger triple.
Iterating this procedure down to $x_{a4}$ for $1 \leq a \leq 3$,
we eventually express every $x_{ab}$ $(4 \leq a\leq n$ or $4 \leq b \leq n)$ 
as a polynomial in $x_{12},x_{13},x_{23}$.
This recursive elimination substantially simplifies the parametrization of $S_0(K)$.

In fact, the above elimination process can be understood naturally 
as the following diagrammatic operation, which reflects the diagram $D_K$ in bridge position. 
For $x_{ij}$ with $1 \leq i \leq j \leq n$, consider a loop $s_{ij}$ in the knot exterior $E_K$, 
freely homotopic to $m_im_j$. We decompose $s_{ak}$ into two arcs $c_a$ and $c_k$, 
corresponding to the meridians $m_a$ and $m_k$, respectively. 
Then, as illustrated in Figure \ref{fig_Tietze}, for a Wirtinger triple $(i,j,k)$ of $D_K$,
we slide $c_k$ along the $k$th arc of $D_K$, keeping $c_a$
and the endpoints of $c_k$ fixed. 

\begin{figure}[htbp]
\[
\hspace*{-2cm}
D_K=
\begin{minipage}{6cm}
\begin{overpic}[width=\hsize]{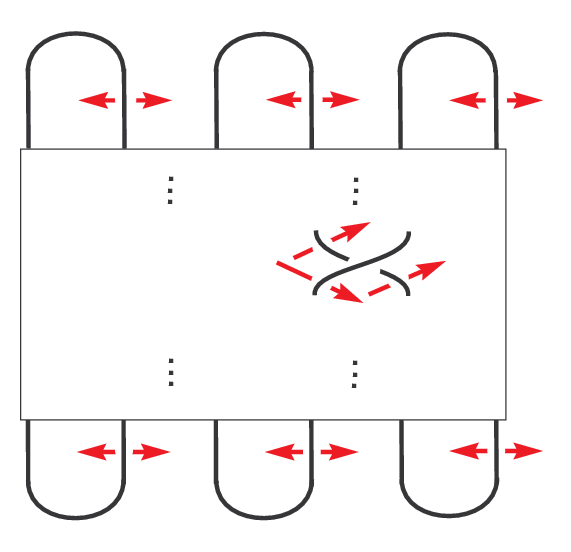}
\put(24,8){$m_1$}
\put(56,8){$m_2$}
\put(89,8){$m_3$}
\put(45,42){$m_i$} 
\put(70,40){$m_j$}
\put(50,60){$m_k$}
\put(22,89){$m_{n-2}$} 
\put(54,89){$m_{n-1}$}
\put(87,89){$m_n$}
\end{overpic}
\end{minipage}
\hspace*{-1.2cm}
\begin{minipage}{2cm}\includegraphics[width=\hsize]{arrow.eps}\end{minipage}
\begin{minipage}{4cm}
\begin{overpic}[width=\hsize]{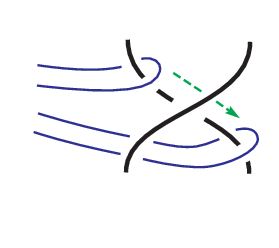}
\put(0,9){$s_{ak}=s_{ij}s_{ai}-s_{aj}$ $(1 \leq a \leq n)$}
\put(10,75){\hspace*{-0.3cm} \small $c_k$ in $s_{ak}$} 
\put(88,49){\small sliding}
\end{overpic}
\end{minipage}
\]
\caption{Sliding the subarc $c_k$ of $s_{ak}$ and the resulting relation. 
Meridians are oriented so that they are conjugate.}
\label{fig_Tietze}
\end{figure}

When the slid arc passes under the $i$th arc of $D_K$, 
the resulting winding part is resolved by applying the trace-free skein relation (\ref{ff-formula}),  
yielding $s_{ak}=s_{ij}s_{ai}-s_{aj}$: 
\[
\begin{minipage}{3.8cm}
\begin{overpic}[width=\hsize]{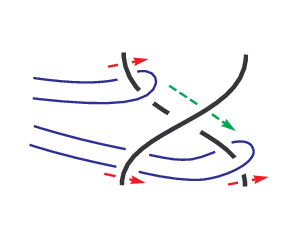}
\put(2,63){\small $c_a \cup c_k$} 
\put(85,42){\small sliding} 
\put(38,8){\small $i$}
\put(80,8){\small $j$}
\put(38,70){\small $k$}
\end{overpic}
\end{minipage}
\hspace*{1cm}
=
\begin{minipage}{3.5cm}
\begin{overpic}[width=\hsize]{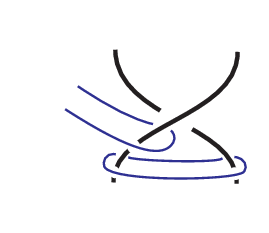}
\put(2,58){\small $c_a \cup c_i$} 
\put(58,8){$s_{ij}$}
\put(40,8){\small $i$}
\put(88,8){\small $j$}
\put(38,73){\small $k$}
\end{overpic}
\end{minipage}
-\begin{minipage}{3.5cm}
\begin{overpic}[width=\hsize]{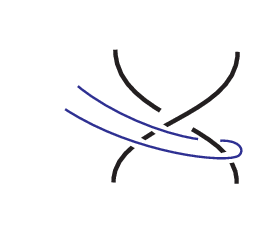}
\put(2,60){\small $c_a \cup c_j$} 
\put(40,8){\small $i$}
\put(88,8){\small $j$}
\put(38,73){\small $k$}
\end{overpic}
\end{minipage}.
\]
After substituting $s_{ij}=x_{ij}$, this becomes the fundamental relation 
(F2): $x_{ak}=x_{ij}x_{ai}-x_{aj}$ for a Wirtinger triple $(i,j,k)$. 

With this setup, the polynomial expression of the parameter $x_{ak}$ in $x_{12}$, $x_{13}$, 
and $x_{23}$ can equivalently be obtained as follows. 
First, place the corresponding loop $s_{ak}$ in $E_K$ 
and slide the subarc $c_k$ of $s_{ak}$ to the bottom of the diagram $D_K$, 
while deferring the resolution of any winding parts encountered during the slide, 
and labeling these winding parts consecutively as they arise. 
Then they are resolved at the end in this order by the trace-free skein relation (\ref{ff-formula}). 
If a labeled part is no longer a winding part at the bottom of $D_K$, 
the label is discarded and no resolution is applied. 
Throughout this paper, we use the phrase ``resolve at the end'' 
to mean this convention. 

\fbox{\bf Step 2}
We summarize the above diagrammatic elimination procedure case by case 
for each parameter $x_{ij}$ ($1 \le i \le j \le n$). 
\begin{enumerate}
\item For $x_{ij}$ with $1 \le i < j \le n-3$, we may place the corresponding loop $s_{ij}$ 
below the top strands of the diagram $D_K$. 
We slide $s_{ij}$ downward to the bottom of $D_K$ and resolve the winding parts 
at the end. 
After substituting $s_{ij}=x_{ij}$, the resulting expression eliminates $x_{ij}$. 
The case $i=j$ yields the trivial relation $x_{ii}=2$ and is therefore omitted. 

\item For $x_{ij}$ with $1 \le i < n-2 \le j \le n$, the corresponding loop $s_{ij}$ 
admits two possible downward sliding paths. 
This is because the subarc $c_j$ lies at the top of the diagram $D_K$, as illustrated below: 
\[
\begin{minipage}{7cm}
\begin{overpic}[width=\hsize]{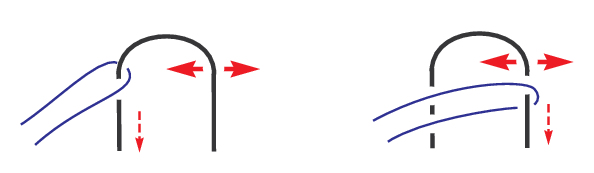}
\put(41,22){\small $m_i$}
\put(50,10){\small or}
\put(93,22){\small $m_i$}
\end{overpic}
\end{minipage}.
\]
Let $R_j(s_{ij})$ (resp.\ $L_j(s_{ij})$) denote the polynomial in $s_{12},s_{13},s_{23}$ 
obtained by choosing the right (resp. left) sliding path for $c_j$.
It follows that $R_j(x_{ij})=L_j(x_{ij})$ by construction. 
This equality eliminates $x_{ij}$ and provides 
one of the defining equations for $F_2(K) \subset \C^3$. 

\item For $x_{ij}$ with $n-2 \le i < j \le n$, the corresponding loop $s_{ij}$ admits 
four possible sliding paths, as each of the subarcs $c_i$ and $c_j$ can be slid 
either to the left or to the right. 
We denote the resulting polynomials in $s_{12}, s_{13}, s_{23}$ by 
$R_iR_j(s_{ij})$, $R_iL_j(s_{ij})$, $L_iR_j(s_{ij})$, $L_iL_j(s_{ij})$, 
according to the choice of left or right sliding for $c_i$ and $c_j$.
By construction, they yield the relations
\[
R_iR_j(x_{ij})=R_iL_j(x_{ij})=L_iR_j(x_{ij})=L_iL_j(x_{ij}). 
\]
These eliminate $x_{ij}$ and provide additional defining equations for $F_2(K) \subset \C^3$. 

\item For $x_{ij}$ with $n-2 \le i = j \le n$, the corresponding loop $s_{ij}$ admits again 
four possible sliding paths, as in Process (3). 
However, it suffices to consider only the case $R_iL_j(x_{ij})$, 
since both $R_iR_j(x_{ij})$ and $L_iL_j(x_{ij})$ yield the trivial relation $x_{ii}=2$, 
and the equality $L_iR_j(x_{ij})=R_iL_j(x_{ij})$ holds, by construction. 
Thus we obtain the single equation $R_iL_j(x_{ij})=2$, yielding 
the remaining defining equations for $F_2(K) \subset \C^3$. 
\end{enumerate}

By the above arguments, the projection $i: F_2(K) \to \C^3$ defined by  
\[
(x_{12},\ldots,x_{n-1,n}) \mapsto (x_{12},x_{13},x_{23}) 
\]
induces a biregular map onto its image.
Under this projection, the equations obtained in Processes (2), (3), and (4)
give all defining relations of $\mathrm{Im}(i)$.
Consequently, we obtain the isomorphism
\[
F_2(K) \cong \{(x_{12},x_{13},x_{23}) \in \C^3 \mid \text{\rm (A), (B), (C)}\}, 
\]
where the defining equations (A), (B), and (C) are explicitly given by
\[
\begin{array}{lll}
\mbox{\rm (A): for $1 \leq i < n-2 \leq j \leq n$,} 
& R_j(x_{ij}) = L_j(x_{ij}), & \\
\mbox{\rm (B): for $n-2 \leq i < j \leq n$,} 
& R_iR_j(x_{ij})=R_iL_j(x_{ij})=L_iR_j(x_{ij})=L_iL_j(x_{ij}),\\
\mbox{\rm (C): for $n-2 \leq i = j \leq n$,} 
& R_iL_j(x_{ij})=2. 
\end{array}
\]
We remark that this description of $F_2(K)$ extends naturally to a knot $K$
in $m$-bridge position. 

\fbox{\bf Step 3} To reduce the parameters of the trace-free slice 
$S_0(K) \subset \C^{\binom{n}{2}+\binom{n}{3}}$,
we reduce the relations (GH) by the elimination process for $x_{ij}$. 
For instance, each parameter $x_{ap}$ $(1 \leq a < b \leq n)$ can be written in the form 
\[
x_{ab} = \sum_{i=1}^{3} f_{bi}\, x_{ai},
\]
where each $f_{bi}$ is a polynomial in $x_{12}, x_{13}, x_{23}$. 
This expression is obtained by sliding the subarc $c_b$ of the corresponding loop $s_{ab}$ downward, 
while keeping the subarc $c_a$ and the endpoints of $c_b$ fixed, 
and resolving the winding parts at the end. 
Then, the relation (GH) for $x_{123}x_{pqr}$ $(1 \leq p<q<r \leq n)$ 
can be rewritten in the following form:
\begin{eqnarray*}
x_{123}x_{pqr}&=&\frac{1}{2}
\left|\begin{array}{ccc}
x_{1p} & x_{1q} & x_{1r}\\
x_{2p} & x_{2q} & x_{2r}\\
x_{3p} & x_{3q} & x_{3r}
\end{array}\right|
=\frac{1}{2}
\left|\begin{array}{ccc}
\Sigma_{i=1}^3 f_{pi}x_{1i} & \Sigma_{j=1}^3 f_{qj}x_{1j} & \Sigma_{k=1}^3 f_{rk}x_{1k}\\
\Sigma_{i=1}^3 f_{pi}x_{2i} & \Sigma_{j=1}^3 f_{qj}x_{2j} & \Sigma_{k=1}^3 f_{rk}x_{2k}\\
\Sigma_{i=1}^3 f_{pi}x_{3i} & \Sigma_{j=1}^3 f_{qj}x_{3j} & \Sigma_{k=1}^3 f_{rk}x_{3k}
\end{array}\right|\\
&=&\frac{1}{2}
{\small 
\left|\begin{array}{ccc}
x_{11} & x_{12} & x_{13}\\
x_{21} & x_{22} & x_{23}\\
x_{31} & x_{32} & x_{33}
\end{array}\right|
\left|\begin{array}{ccc}
f_{p1} & f_{q1} & f_{r1}\\
f_{p2} & f_{q2} & f_{r2}\\
f_{p3} & f_{q3} & f_{r3}
\end{array}\right|
=
x_{123}^2
\left|\begin{array}{ccc}
f_{p1} & f_{q1} & f_{r1}\\
f_{p2} & f_{q2} & f_{r2}\\
f_{p3} & f_{q3} & f_{r3}
\end{array}\right|.}
\end{eqnarray*}
Thus $x_{123}$ and $x_{pqr}$ for $(p,q,r) \neq (1,2,3)$ satisfy  
\[
x_{123}=
\pm\frac{1}{\sqrt{2}}
\left|
\begin{array}{ccc}
x_{11} & x_{12} & x_{13}\\
x_{21} & x_{22} & x_{23}\\
x_{31} & x_{32} & x_{33}
\end{array}
\right|^{\frac{1}{2}},\ 
x_{pqr}=
x_{123}
{\small \left|\begin{array}{ccc}
f_{p1} & f_{q1} & f_{r1}\\
f_{p2} & f_{q2} & f_{r2}\\
f_{p3} & f_{q3} & f_{r3}
\end{array}\right|}. 
\]
Hence, $S_0(K)$ is isomorphic to the subset of $i(F_2(K)) \times \C \subset \C^4$ 
consisting of points $(x_{12},x_{13},x_{23};x_{123})$ satisfying 
the single general hexagon relation (GH) for $S_0(K) \subset \C^4$: 
\[
x_{123}^2=
\frac{1}{2}
\left|
\begin{array}{ccc}
x_{11} & x_{12} & x_{13}\\
x_{21} & x_{22} & x_{23}\\
x_{31} & x_{32} & x_{33}
\end{array}
\right|.  
\]
It follows that every point of $F_2(K) \subset \C^3$ lifts to $S_0(K) \subset \C^4$ 
and hence $K$ admits no ghost characters, where $K$ is a knot in 3-bridge position. 

The same argument applies to $2$-bridge knots as well.
Therefore, any knot with bridge index less than $4$ admits no ghost characters.
\end{proof}

Theorem \ref{thm_noghost} and Proposition \ref{prop_conj}, together with 
Theorem \ref{thm_Ng_conj} (1) proved below, imply that Conjecture \ref{conj_nag} holds 
for all $2$-bridge\footnote{The $2$-bridge knot case was originally established in \cite{Ng2}.
See also \cite{Nagasato, Nagasato3, Nagasato0} for alternative proofs.} 
and $3$-bridge knots. 
We also remark that the method used in the proof of Theorem \ref{thm_noghost}
to compute $F_2(K)$ and $S_0(K)$ from a bridge presentation 
was originally developed for braid presentations in \cite{Nagasato3}. 


\subsection{Ghost characters and obstructions to Conjecture \ref{conj_nag}}\label{subsec_ng}
We conclude this paper by giving a criterion, formulated in terms of ghost characters, 
for Conjecture \ref{conj_nag} to hold and for the map $\widehat{\Phi}$ to be surjective.

\begin{theorem}\label{thm_Ng_conj}
Let $K$ be a knot with an $n$-crossing diagram. Then the following hold. 
\begin{enumerate}
\item If $K$ admits no ghost characters, then the map 
\[
\widehat{\Phi} : S_0(K) \longrightarrow X(\Sigma_2K) 
\]
is surjective. Consequently, Conjecture $\ref{conj_nag}$ holds for $K$; 
in particular, this is the case for all $2$-bridge
and $3$-bridge knots by Theorem $\ref{thm_noghost}$.

\item If $K$ admits a ghost character $\mathbf{g} \in F_2(K)$ such that 
\[
(h^*)^{-1}(\mathbf{g}) \neq \emptyset, 
\]
then the map $\widehat{\Phi}$ is not surjective. 

\item If $K$ admits a ghost character $\mathbf{g} \in F_2(K)$ such that 
\[
(h^*)^{-1}(\mathbf{g}) = \emptyset,
\]
then Conjecture $\ref{conj_nag}$ fails. 
\end{enumerate}
\end{theorem}

\begin{proof}
Suppose that $X(\Sigma_2K)$ is parametrized by $(z_{ab};y_{def}) \in \C^{{n \choose 2}+{n-1 \choose 3}}$ 
as in Subsection \ref{subsec_ghost}. 
Regarding (1), we focus on the following quadratic relations for $y_{def}$ $(2 \leq a<b<c \leq n)$, 
which any point in $X(\Sigma_2K) \subset \C^{{n \choose 2}+{n-1 \choose 3}}$ 
must satisfy (see Subsection \ref{subsec_gam}): 
\begin{eqnarray*}
\mathbf{(P1)} &(y_{abc})^2-P_{abc} y_{abc}+Q_{abc}=0,&\\
\mathbf{(P4)} 
&(y_{234}-y_{243})(2y_{abc}+z_{1a} z_{1b} z_{1c}-z_{1a} y_{bc}-z_{1b} y_{ac}-z_{1c} y_{ab})
-   
\left|\begin{array}{cccc}
z_{12} & y_{2a} & y_{2b} & y_{2c}\\
z_{13} & y_{3a} & y_{3b} & y_{3c}\\
z_{14} & y_{4a} & y_{4b} & y_{4c}\\
2      & z_{1a}  & z_{1b} & z_{1c}
\end{array}\right|=0,&
\end{eqnarray*}
where $y_{ij}=z_{1i}z_{1j}-z_{ij}$ and  
\begin{eqnarray*}
P_{abc}&=&z_{1a}y_{bc}+z_{1b}y_{ac}+z_{1c}y_{ab}-z_{1a}z_{1b}z_{1c},\\
Q_{abc}&=&z_{1a}^2+z_{1b}^2+z_{1c}^2+y_{ab}^2+y_{ac}^2+y_{bc}^2+y_{ab}y_{ac}y_{bc}
-z_{1a}z_{1b}y_{ab}-z_{1a}z_{1c}y_{ac}-z_{1b}z_{1c}y_{bc}-4,\\
y_{243}&=&-y_{234}-z_{12}z_{13}z_{14}+z_{12}y_{34}+z_{13}y_{24}+z_{14}y_{23}.
\end{eqnarray*}
Note that in the present setting, some coefficients in (P1) and (P4) 
differ in sign from those in Subsection \ref{subsec_gam}, 
since $X(\Sigma_2 K)$ is parametrized by positive traces 
\[
z_{ab}(\chi_{\rho_*})=t_{m_am_b}(\chi_{\rho_*}),\ y_{abc}(\chi_{\rho_*})=t_{m_am_bm_c}(\chi_{\rho_*}).
\] 
Fix a point $(x_{ab}) \in F_2(K)$ for the coefficients in (P1) and (P4). 
If every relation in (P1) has a double root $y_{abc}$ $(2 \le a < b < c \le n)$, 
then (P1) and (P4) admit at most one common solution $(x_{ab}; y_{abc}) \in X(\Sigma_2 K)$. 
In the remaining cases, relabeling the meridians of $K$ if necessary, 
we may assume without loss of generality that the equation 
\[
(y_{234})^2 - P_{234} y_{234} + Q_{234} = 0 
\]
does not have a double root. 
Then, by Fricke's lemma (see \cite{Mag}, for example), 
if $y_{234}$ is a root of the above equation, then $y_{243}$ is the other root. 
Hence $y_{234} - y_{243} \neq 0$, and (P1) and (P4) admit at most two common solutions 
$(x_{ab}; y_{def})$, with $y_{def}$ chosen consistently. 
These show that $(h^*)^{-1}(x_{ab})$ consists of at most two points. 
In particular, if $(x_{ab})$ is not a ghost character, 
then $(h^*)^{-1}(x_{ab})$ consists of exactly one point. 
Indeed, a point ${\mathbf x}=(x_{ab}) \in {\rm Im}(q) \subset F_2(K)$ lifts via the map $r$ to
\[
\widehat{\Phi}(\chi_{\rho})
=\left(x_{ab}; x_{1d}x_{1e}x_{1f}-\frac{1}{2}(x_{1d}x_{ef}+x_{1e}x_{df}+x_{1f}x_{de})\right) \in X(\Sigma_2K), 
\]
where $\chi_{\rho}=(x_{ab};x_{ijk})$ is a trace-free character in the preimage 
$q^{-1}(\mathbf{x}) \subset S_0(K)$. For this character, the value 
\[
y_{def}=x_{1d}x_{1e}x_{1f}-\frac{1}{2}(x_{1d}x_{ef}+x_{1e}x_{df}+x_{1f}x_{de}). 
\]
is the double root of each quadratic equation for $y_{def}$ in (P1). 
This follows from the fact that the discriminant $D:= (P_{def})^2 - 4Q_{def}$ 
vanishes at $\widehat{\Phi}(\chi_{\rho})$, as shown below: 
\begin{eqnarray*}
&&P_{def}=2\left(x_{1d}x_{1e}x_{1f}-\frac{1}{2}(x_{1d}x_{ef}+x_{1e}x_{df}+x_{1f}x_{de})\right),\\
&&Q_{def}=\left(x_{1d}x_{1e}x_{1f}-\frac{1}{2}(x_{1d}x_{ef}+x_{1e}x_{df}+x_{1f}x_{de})\right)^2
-\frac{1}{4}D^{1def}_{1def}({\mathbf x}). 
\end{eqnarray*}
Here $D^{1def}_{1def}({\mathbf x})=0$, 
since $D^{1def}_{1def}({\mathbf x})$ is a sister relation of (R) for $S_0(K)$. 
Therefore, for every point in $\mathrm{Im}(q)$, its preimage under $h^*$ 
(i.e., the fiber of $h^*$ over it) consists of a single point that lies in 
$\mathrm{Im}(\widehat{\Phi})$.  
(A similar result are shown in \cite[Proposition 6]{Nagasato-Yamaguchi}.)  
As a consequence, any point of $X(\Sigma_2K)$ outside $\mathrm{Im}(\widehat{\Phi})$ 
must be mapped by $h^*$ to a point outside $\mathrm{Im}(q)$, namely, a ghost character of $K$. 
This completes the proof of (1). 

Regarding (2), suppose that $K$ admits a ghost character 
$\mathbf{g}=(x_{ij}) \in F_2(K)$ such that $(h^*)^{-1}(\mathbf{g}) \neq \emptyset$. 
Then there exists a representation
\[
\rho_* : \pi_1(\Sigma_2K) \to \SL_2(\C)
\]
satisfying $t_{m_i m_j}(\rho_*) = x_{ij}$ for all $i<j$. 
In this case, the character $\chi_{\rho_*}$ does not lie in $\mathrm{Im}(\widehat{\Phi})$. 
Otherwise, there exists a trace-free character $\chi_{\rho}=(x_{ij};x_{ijk}) \in S_0(K)$ such that 
$\widehat{\Phi}(\chi_{\rho})=\chi_{\rho_*}$. 
Then the image $q(\chi_{\rho})=(x_{ij})$ coincides with $\mathbf{g}$, 
contradicting the fact that $\mathbf{g}$ is a ghost character. 
Consequently, the map $\widehat{\Phi}$ is not surjective.

Regarding (3), suppose that $K$ admits a ghost character $\mathbf{g} \in F_2(K)$ 
such that $(h^*)^{-1}(\mathbf{g}) = \emptyset$. Then the map $h^*$ fails to be surjective, 
and therefore cannot be an isomorphism. 
Consequently, Conjecture \ref{conj_nag} does not hold. 
\end{proof}

Recall that, in the proof of Theorem \ref{thm_Ng_conj} (1), 
for a fixed point $(x_{ab}) \in F_2(K)$, the system of equations (P1) and (P4) admits at most two 
common solutions. This observation leads to the following characterization of Ng's conjecture.

\begin{theorem}[A necessary and sufficient condition for Ng's conjecture]\label{iff}
Ng's conjecture holds for a knot $K$ if and only if $K$ admits no ghost characters.
\end{theorem}

\begin{proof}
The sufficient condition for Ng's conjecture has already been shown in Theorem \ref{thm_Ng_conj} (1). 
To prove the necessary condition, we analyze the discriminant $D$ of (P1):
\[
(y_{abc})^2 - P_{abc}y_{abc} + Q_{abc} = 0.
\]
We begin by rewriting $P_{abc}$ and $Q_{abc}$ in terms of the parameters $z_{ij}$:
\begin{eqnarray*}
P_{abc}&=&2z_{1a} z_{1b} z_{1c}-z_{1a} z_{bc}-z_{1b} z_{ac}-z_{1c} z_{ab}\\
Q_{abc}&=&z_{1a}^2 z_{1b}^2 z_{1c}^2-z_{1a}^2 z_{1b} z_{1c} z_{bc}
-z_{1a} z_{1b}^2 z_{1c} z_{ac}-z_{1a} z_{1b} z_{1c}^2 z_{ab} + z_{1a} z_{1b} z_{ac} z_{bc}\\
&&+z_{1a} z_{1c} z_{ab} z_{bc}+z_{1b} z_{1c} z_{ab} z_{ac} - z_{1a} z_{1b} z_{ab}-z_{1a}z_{1c} z_{ac}
-z_{1b} z_{1c} z_{bc}-z_{ab} z_{ac} z_{bc}\\
&&+ z_{1a}^2 + z_{1b}^2 + z_{1c}^2 + z_{ab}^2 + z_{ac}^2 + z_{bc}^2 - 4
\end{eqnarray*}
In this setting, one can verify that the discriminant $D$ of the quadratic equation (P1) 
for a fixed point ${\mathbf x}=(x_{ij}) \in F_2(K)$ satisfies 
\[
D=P_{abc}^2-4Q_{abc}=D_{1abc}^{1abc}({\mathbf x}). 
\]
Consequently, the quadratic equation (P1) has a double root 
for the fixed point $\mathbf{x} \in F_2(K)$ if and only if the relation 
$D_{1abc}^{1abc}({\mathbf x})=0$ holds.

Let $K$ be a knot that admits a ghost character $\mathbf{g}=(x_{ij}) \in F_2(K)$. 
We claim that the preimage 
$(h^*)^{-1}(\mathbf{g})$ consists of either exactly two points or is empty. 
Assume that $(h^*)^{-1}(\mathbf{g}) \neq \emptyset$, so that there exists 
a representation $\rho_* : \pi_1(\Sigma_2K) \to \SL_2(\C)$ whose character 
lies in $(h^*)^{-1}(\mathbf{g})$. This means that $(z_{ij}(\chi_{\rho_*}))=(x_{ij})=\mathbf{g}$. 
From such a representation $\rho_*$, we construct another representation 
$\bar{\rho_*} : \pi_1(\Sigma_2K) \to \SL_2(\C)$ by setting 
\[
\bar{\rho_*}(m_1m_i):={}^T\hspace*{-0.1cm}\rho_*(m_1m_i)\ 
(2 \leq i \leq n), 
\]
where ${}^T\hspace*{-0.1cm}*$ denotes the transpose of a matrix $*$,
and extending this assignment multiplicatively to all elements of $\pi_1(\Sigma_2K)$. 
This definition yields a well-defined group homomorphism. 
Indeed, we can check that the relators of $\pi_1(\Sigma_2K)$ are preserved by $\bar{\rho_*}$, 
using the presentation 
\[
\pi_1(\Sigma_2K) \cong \langle m_1m_i\ (2 \leq i \leq n) \mid w(r_j),w(m_1r_jm_1^{-1})\ 
(1 \leq j \leq n-1), m_i^2\ (1 \leq i \leq n)\rangle 
\] 
associated with the Wirtinger presentation of the knot group 
\[
G(K)=\langle m_1,\cdots,m_n \mid r_1, \cdots, r_{n-1} \rangle. 
\] 
For instance, a relator of the form $r_j = m_a m_b m_a^{-1} m_c^{-1}$ of $G(K)$ 
gives the corresponding relators in the presentation of $\pi_1(\Sigma_2K)$:  
\begin{eqnarray*}
w(r_j)&=&(m_1m_a)^{-1}(m_1m_b)(m_1m_a)^{-1}(m_1m_c),\\ 
w(m_1r_jm_1^{-1})&=&(m_1m_a)(m_1m_b)^{-1}(m_1m_a)(m_1m_c)^{-1}. 
\end{eqnarray*}
We compute the images of these words under $\bar{\rho_*}$ as follows: 
\begin{eqnarray*}
\bar{\rho_*}(w(r_j))&=&\bar{\rho_*}((m_1m_a)^{-1}(m_1m_b)(m_1m_a)^{-1}(m_1m_c))\\
&=&\bar{\rho_*}((m_1m_a)^{-1})\bar{\rho_*}(m_1m_b)\bar{\rho_*}((m_1m_a)^{-1})
\bar{\rho_*}(m_1m_c)\\
&=&{}^T\hspace*{-0.1cm}\rho_*(w(m_1r_jm_1^{-1})^{-1})=E,\\
\bar{\rho_*}(w(m_1r_jm_1^{-1}))&=&\bar{\rho_*}((m_1m_a)(m_1m_b)^{-1}(m_1m_a)(m_1m_c)^{-1})\\
&=&\bar{\rho_*}((m_1m_a))\bar{\rho_*}((m_1m_b)^{-1})\bar{\rho_*}((m_1m_a))
\bar{\rho_*}((m_1m_c)^{-1})\\
&=&{}^T\hspace*{-0.1cm}\rho_*(w(r_j)^{-1})=E. 
\end{eqnarray*}
The remaining cases are shown in the same way. 

In this setting, we show that the characters $\chi_{\rho_*}$ and $\chi_{\bar{\rho_*}}$ 
define distinct points of $X(\Sigma_2K)$. Again, by Fricke's Lemma, 
if $y_{abc}(\chi_{\rho_*})$ is a solution of the quadratic equation (P1):  
\[
(y_{abc})^2-P_{abc}y_{abc}+Q_{abc}=0, 
\]
then $y_{acb}(\chi_{\rho_*})$ is the other solution.  
Accordingly, if $\rho_*$ is a representation such that $y_{abc}(\chi_{\rho_*})$ is a solution of (P1)  
for each $a<b<c$, then the associated representation $\bar{\rho_*}$ yields the other solution, 
namely $y_{abc}(\chi_{\bar{\rho_*}})=y_{acb}(\chi_{\rho_*})$. 
This correspondence can be verified directly: 
\begin{eqnarray*}
z_{1a}(\chi_{\bar{\rho_*}})&=&\tr(\bar{\rho_*}(m_1m_a))=z_{1a}(\chi_{\rho_*}),\\
z_{ab}(\chi_{\bar{\rho_*}})&=&\tr(\bar{\rho_*}((m_1m_a)^{-1}(m_1m_b)))=z_{ab}(\chi_{\rho_*}),\\
y_{abc}(\chi_{\bar{\rho_*}})&=&\tr(\bar{\rho_*}((m_1m_a)(m_1m_b)(m_1m_c)))=y_{acb}(\chi_{\rho_*}). 
\end{eqnarray*}

Since $\mathbf{g}$ is a ghost character with nonempty preimage under $h^*$, 
$\mathbf{g}$ does not satisfy at least one of the relations $D_{1abc}^{1abc}=0$. 
Indeed, suppose to the contrary that $D_{1abc}^{1abc}(\mathbf{g})=0$ holds for all triples $(a,b,c)$.
Then Lemma 2.1 in \cite{Mag} implies the existence of a trace-free matrix 
$A \in \SL_2(\C)$ such that 
\[
(A \cdot \rho_*(m_1m_i))^2=-E\ (2 \leq i \leq n), 
\] 
unless $\tr([\rho_*(m_1m_i),\rho_*(m_1m_j)])=2$ for all $2 \leq i < j \leq n$.
Although Lemma 2.1 is stated for a fixed triple $(a,b,c)$,
a basic linear-algebraic argument shows that the same matrix $A$ 
can be chosen uniformly for all triples. 

Suppose that $\tr([\rho_*(m_1m_i),\rho_*(m_1m_j)])=2$ for all $2 \leq i < j \leq n$. 
Then, by \cite[Theorem 1.2]{Flo}, together with the assumption that $D_{1abc}^{1abc}(\mathbf{g})=0$ 
for all triples $(a,b,c)$, it follows that $\rho_*$ is reducible.
Then there exsits the character $\chi_{\rho_{\rm ab}}$ of an abelian representation 
$\rho_{\rm ab} : \pi_1(\Sigma_2K) \to \SL_2(\C)$ such that $\chi_{\rho_*}=\chi_{\rho_{\rm ab}}$; 
these characters correspond to a bifurcation point in $X(\Sigma_2K)$. 
By Proposition 6 and Lemma 22 in \cite{Nagasato-Yamaguchi}, 
the character $\chi_{\rho_{\rm ab}}$ lies in the image of $\widehat{\Phi}$. 
However, this contradicts the conclusion shown in the proof of Theorem \ref{thm_Ng_conj} (2), 
namely that the preimage $(h^*)^{-1}(\mathbf{g})$ lies outside $\mathrm{Im}(\widehat{\Phi})$.
We therefore exclude this case and assume that $\tr([\rho_*(m_1m_i),\rho_*(m_1m_j)]) \neq 2$ 
for at least one pair $(i,j)$ with $2 \leq i < j \leq n$. In particular, $\rho_*$ is irreducible. 

In this situation, the trace-free matrix $A \in \SL_2(\C)$ introduced above indeed exists.
Using $A$ and $\rho_*$, we define a trace-free representation 
$\rho_A : G(K) \to \SL_2(\C)$ by
\[
\rho_A(m_i) := A \cdot \rho_*(m_1m_i) \ (1 \leq i \leq n).
\]
A direct calculation using 
\[
A \cdot \rho_*(m_1m_i)=- \rho_*(m_1m_i)^{-1} \cdot A^{-1}\ (2 \leq i \leq n)
\]
shows that $\rho_A$ is well-defined, that is, $\rho_A$ preserves the relators of $G(K)$. 
In this setting, the projection $q(\chi_{\rho_A})$ of the trace-free character 
$\chi_{\rho_A} \in S_0(K)$ coincides with the ghost character $\mathbf{g}=(x_{ij})$, 
a contradiction. 

Consequently, there exists at least one triple $(a,b,c)$ such that 
$D_{1abc}^{1abc}(\mathbf{g}) \neq 0$. 
Since the discriminant $D=D_{1abc}^{1abc}(\mathbf{g})$ is nonzero, we have 
$y_{abc}(\chi_{\rho_*}) \neq y_{abc}(\chi_{\bar{\rho_*}})$.  
Hence the characters $\chi_{\rho_*}$ and $\chi_{\bar{\rho_*}}$ 
define distinct points of $(h^*)^{-1}(\mathbf{g}) \subset X(\Sigma_2K)$. 
Accordingly, for any ghost character $\mathbf{g}$, 
the preimage $(h^*)^{-1}(\mathbf{g})$ consists of either exactly two points or is empty. 
In particular, the map $h^*$ fails to be bijective. 
Therefore, if $K$ admits a ghost character, then Ng's conjecture does not hold for $K$.
\end{proof}

From this viewpoint, we have focused on ghost characters of knots and 
their applications. 
Computational evidence suggests that certain $4$-bridge and $5$-bridge knots 
admit ghost characters. By Theorem \ref{iff}, such knots provide counterexamples 
to Conjecture \ref{conj_nag}. A detailed investigation of these examples will appear 
in \cite{Nagasato-Suzuki}. 


\section*{Acknowledgments}
I would like to thank Yoshikazu Yamaguchi for many helpful comments on ghost
characters of knots. 
Parts of the results in this paper were presented in my talk at the conference
``RIMS Seminar: Representation spaces, twisted topological invariants, and 
geometric structures of $3$-manifolds,'' held in 2012 (see \cite{Nagasato0}). 
I am grateful to the organizers for their warm hospitality.
This research was started during my stay at the University of California, 
Riverside (2004-2006). 
I am deeply indebted to Professor Xiao-Song Lin for his invaluable discussions, 
comments, encouragement, and generous hospitality throughout that period. 
The research carried out at that time was published in \cite{Nagasato3} and 
forms the basis of the present work. 
The early stages of this research were partially supported by JSPS Research
Fellowships for Young Scientists, JSPS KAKENHI for Young Scientists (Start-up),
MEXT KAKENHI for Young Scientists (B), and JSPS KAKENHI for Young Scientists (B).
The current research was partially supported by JSPS KAKENHI (C),
Grant No. 20K03619.



\begin{thebibliography}{99}
\bibitem{Bullock} D. Bullock: 
{\em Rings of $SL_2(\C)$-characters and the Kauffman bracket skein module}, 
Comment. Math. Helv. {\bf 72} (1997), 521--542.

\bibitem{Bullock2} D. Bullock: 
{\em A finite set of generators for the Kauffman bracket skein algebra}, 
Math. Z. {\bf 231} (1999), 91--101. 

\bibitem{Burde} G. Burde: 
{\em Darstellungen von Knottengruppen},
Math. Ann. {\bf 173} (1967), 24--33. 

\bibitem{Burde-Zieschang} G. Burde and H. Zieschang: 
{\em Knots}, de Gruyter Studies in Mathematics {\bf 5}, Walter de Gruyter \& Co., 
Berlin, 2003

\bibitem{Cornwell} C.R. Cornwell: 
{\em Character varieties of knot complements and branched double-covers via the cord ring},
preprint, arXiv:1509.04962.

\bibitem{Culler-Shalen} M. Culler and P. Shalen: 
{\em Varieties of group presentations and splittings of $3$-manifolds}, 
Ann. of Math. {\bf 117} (1983), 109--146.

\bibitem{deRham} G. de Rham: 
{\em Introduction aux polyn\^{o}mes d'un n{\oe}ud}, 
Enseign. Math. {\bf 13} (1967), 187--194. 

\bibitem{EENS} T. Ekholm, J. Etnyre, L. Ng and M. Sullivan: 
{\em Knot contact homology}, arXiv:1109.1542v2. 

\bibitem{Flo} C. Florentino:
{\em Invariants of $2 \times 2$ matrices, irreducible $\SL(2,\C)$-characters 
and the Magnus trace map},
Geom. Dedicata {\bf 121}, 167--186 (2006).

\bibitem{Fox} R. Fox: 
{\em Free differential calculus III, subgroups}, 
Ann. of Math. {\bf 64} (1956).

\bibitem{Fricke} R. Fricke and Klein: 
{\em Vorlesungen \:{u}ber die Theorie der automorphen Functionen},
Vol. 1, B.G. Teubner, Leipzig, 1897.

\bibitem{Gelca-Nagasato} R. Gelca and F. Nagasato: 
{\em Some results about the Kauffman bracket skein module of 
the twist knot exterior}, 
J. Knot Theory Ramifications {\bf 15} (2006), 1095--1106. 

\bibitem{Gonzalez-Montesinos} F. Gonz\'{a}lez-Acu\~{n}a and J.M. Montesinos: 
{\em On the character variety of group representations in $\SL(2,\C)$ 
and $\mathrm{PSL}(2,\C)$}, Math. Z., {\bf 214} (1993), 627--652.

\bibitem{Horowitz} R. Horowitz: 
{\em Characters of free groups represented in the two dimensional linear group}, 
Comm. Pure Appl. Math. {\bf 25} (1972), 635--649.

\bibitem{Kawauchi} A. Kawauchi: 
{\em A survey of knot theory}, Birkh\"{a}user Verlag, Basel, 1996. 

\bibitem{Kinoshita} S. Kinoshita: 
{\em Isoukikagaku-nyumon (in Japanese)}, 
Baifukan, Tokyo, 2000.

\bibitem{Lin1}X.-S. Lin:
{\em A knot invariant via representation spaces}, 
Jour. of Diff. Geom. {\bf 35} (1992), 337--357.

\bibitem{Lin2}
X.-S. Lin: 
{\em Representations of knot groups and twisted Alexander polynomials}, 
Acta Math. Sin. (Engl. Ser.) {\bf 17} (2001), 361--380.

\bibitem{Mag}
W. Magnus: 
{\em Rings of Fricke characters and automorphism groups of free groups}, 
Math. Z. {\bf 170} (1980), 91--103. 

\bibitem{Nagasato} F. Nagasato: 
{\em Character variety no danmen kara yudou sareru daisutayoutai-zoku to 
knot contact homology 
(in Japanese)}, Proceedings of the 54th Topology Symposium, Aizu, Japan (2007).

\bibitem{Nagasato3} F. Nagasato:
{\em Algebraic varieties via a filtration of the KBSM and knot contact homology}, 
Topology Appl., {\bf 264} (2019), 251-275.  

\bibitem{Nagasato2} F. Nagasato: 
{\em Finiteness of a section of the $\SL(2,\C)$-character variety of knot groups},
Kobe J. Math., {\bf 24} (2007), 125--136. 

\bibitem{Nagasato1} F. Nagasato: 
{\em On a behavior of a slice of the $\SL_2(\C)$-character variety of a knot group 
under the connected sum}, Topology Appl. {\bf 157} (2010), 182--187 (2010). 

\bibitem{Nagasato4} F. Nagasato: 
{\em On the trace-free characters}, 
RIMS Kokyuroku ``Representation spaces, twisted topological invariants 
and geometric structures of 3-manifolds'' {\bf 1836} (2013), 110--123.

\bibitem{Nagasato0} F. Nagasato: 
{\em On trace-free characters and abelian knot contact homology}, 
RIMS Seminar ``Representation spaces, twisted topological invariants and 
geometric structures of 3-manifolds'' (28th May-1st June 2012, Gora Seiunso, Hakone, Japan. 

\bibitem{Nagasato-Suzuki} F. Nagasato and S. Suzuki:
{\em Trace-free characters and abelian knot contact homology II}, preprint. 

\bibitem{Nagasato-Yamaguchi} F. Nagasato and Y. Yamaguchi: 
{\em On the geometry of the slice of trace-free $\SL_2(\C)$-characters of a knot group}, 
Math. Ann. {\bf 354} (2012), 967--1002.

\bibitem{Ng1} L. Ng:
{\em Knot and braid invariants from contact homology I},
Geom. Topol. {\bf 9} (2005), 247--297. 

\bibitem{Ng2} L. Ng:
{\em Knot and braid invariants from contact homology II},
Geom. Topol. {\bf 9} (2005), 1603-1637. 

\bibitem{Przytycki1} J.H. Przytycki: 
{\em Fundamentals of Kauffman bracket skein module}, 
Kobe J. Math. {\bf 16} (1999), 45--66. 

\bibitem{Przytycki2} J.H. Przytycki: 
{\em Skein modules of 3-manifolds}, 
Bull. Pol. Acad. Sci. {\bf 39} (1991), 91--100.

\bibitem{Przytycki-Sikora} J.H. Przytycki and A. Sikora: 
{\em Skein algebra of a group}, 
Banach Center Publications {\bf 42} (1998), 297--306.

\bibitem{Przytycki-Sikora2} J.H. Przytycki and A. Sikora: 
{\em On skein algebras and $\SL_2(\C)$-character varieties}, 
Topology {\bf 39} (2000), 115--148.

\bibitem{Vogt} H. Vogt: 
{\em Sur les invariants fondamentaux des \'{e}quations differentielles lin\'{e}aires du second 
ordre}, Ann. Sci. \'{E}col. Norm. Sup\'{e}r, III. Ser. {\bf 6} (1889), 3--72. 

\bibitem{z} R. Zentner, {\em Representation spaces of pretzel knots}, 
Algebr. Geom. Topol. {\bf 11} (2011), 2941-2970.
\end{thebibliography}
\end{document}